\documentclass[11pt,letterpaper]{article}
\usepackage[T1]{fontenc}
\usepackage{lmodern}
\usepackage[margin=1in]{geometry}
\usepackage{microtype}
\usepackage{amsmath,amssymb,amsthm}
\usepackage{graphicx,stmaryrd}
\usepackage{threeparttable,booktabs,multirow,makecell}
\usepackage{hyperref,cleveref}
\hypersetup{hidelinks,
  pdftitle={Error Analysis for Solving Elliptic Interface Problems with Randomized Neural Networks},
  pdfauthor={Jun Hu and Sidi Wu}}
\allowdisplaybreaks

\newtheorem{theorem}{Theorem}[section]
\newtheorem{lemma}[theorem]{Lemma}
\theoremstyle{definition}
\newtheorem{definition}[theorem]{Definition}
\newtheorem{assumption}{Assumption}[section]
\theoremstyle{remark}
\newtheorem{remark}[theorem]{Remark}
\crefname{assumption}{Assumption}{Assumptions}

\newcommand{\norm}[1]{\left\lVert#1\right\rVert}
\newcommand{\R}{\mathbb{R}}
\newcommand{\x}{\mathbf{x}}
\newcommand{\F}{\mathcal{F}}
\renewcommand{\vec}[1]{\boldsymbol{#1}}
\newcommand{\bE}{\mathbb{E}}
\newcommand{\mP}{\mathcal{P}}
\newcommand{\mM}{\mathcal{M}}
\newcommand{\mS}{\mathcal{S}}
\newcommand{\mN}{\mathcal{N}}
\newcommand{\generalfunc}{v}
\newcommand{\generalNN}{u}
\newcommand{\solufunc}{v^*}
\newcommand{\coeff}{\alpha}

\title{Error Analysis for Solving Elliptic Interface Problems with Randomized Neural Networks}
\author{Jun Hu\thanks{School of Mathematical Sciences, Peking University, Beijing 100871, China, and Chongqing Research Institute of Big Data, Peking University, Chongqing 401332, China. Email: \texttt{hujun@math.pku.edu.cn}.}
\and Sidi Wu\thanks{Department of Mathematics, Faculty of Science, China University of Mining and Technology, Beijing 100083, China. Email: \texttt{wsd@cumtb.edu.cn}.}}
\date{}

\begin{document}
\maketitle

\begin{abstract}
This paper presents a unified error analysis of randomized neural networks (RaNNs) for elliptic interface problems. We first derive an integral representation of Barron functions using the standard $\tanh$ activation, which yields a $C^2$-approximation bound valid for any hidden-parameter distribution supported on a cube with strictly positive density. We then establish a quantitative $\mathcal{O}(n^{-1})$ generalization bound for the composite loss that enforces the partial differential equation, boundary, and interface conditions simultaneously, where $n$ denotes the number of training samples. Combining these results with the optimization guarantees for RaNNs, we obtain an a priori $L^2$ error estimate for the neural network solution in the presence of interface discontinuities. Numerical experiments are presented to validate the theoretical results.
\end{abstract}

\noindent\textbf{Keywords:} neural networks, interface problems, approximation, meshfree.

\noindent\textbf{MSC 2020:} 65N12, 68T07, 41A46.

\section{Introduction}
Elliptic interface problems are widespread in science and engineering, with applications spanning fluid mechanics, materials science, and biomolecular modeling. Accurately solving such problems requires numerical schemes capable of resolving discontinuities and enforcing interface conditions. Mesh-based methods have proven effective but rely on high-quality body-fitted meshes or specialized treatments of mesh-interface intersections, which can become prohibitively expensive for complicated interfaces \cite{chen2023arbitrarily, guittet2015solving,leveque1994immersed,lu2008recent}. These difficulties have motivated the exploration of meshfree, learning-based approaches, among which neural networks offer a promising alternative. Recent efforts have proposed network architectures tailored to interface discontinuities and demonstrated promising empirical performance \cite{he2022mesh,lai2025hard,liu2020MscaleDNN,wang2020mesh,wu2022interfaced,zhu2023physics}. Theoretically, a complete analysis of neural network solvers typically requires accounting for the consistency of the loss function, the approximation capability of the network, the generalization (or statistical) error arising from finite samples, and the optimization error incurred during training. However, most existing theories focus on partial differential equations (PDEs) with smooth coefficients \cite{jiao2025drm,lu2022machine,mishra2023estimates,shin2020on}. When extended to interface problems, they can typically only estimate partial error components \cite{wu2023convergence,ying2024accurate}, and a comprehensive estimate of the total error is still lacking. This work aims to bridge this gap within the framework of randomized neural networks (RaNNs) \cite{gallicchio2020deep,pao1994learn,scardapane2017randomness}.

RaNNs form a class of feedforward neural networks in which the hidden-layer parameters are randomly initialized and kept fixed, so that only the output layer is trainable. This class of models unifies a range of techniques, including extreme learning machines \cite{trends2015huang,huang2006extreme,liu2015extreme}, random feature methods \cite{ChenChiEYang2022,weinan2020comparative,rahimi2007random}, and stochastic basis selection \cite{igelink1995stochastic}. Within RaNN framework, the nonconvex training of a general neural network can be reduced to a least-squares problem. This simplification is particularly important for interface problems, where loss terms arising from interface conditions can be highly imbalanced, thereby complicating the training landscape \cite{chi2024random,li2025local,ying2024accurate}. As the optimization reduces to a convex optimization problem, the primary analytical challenge shifts to approximation and generalization errors.

Approximations for RaNNs are typically obtained by assuming that the target function admits a Fourier-type representation of the form $\int_{\R^d} e^{i \vec{\omega} \cdot \x } G(\vec{\omega})d \vec{\omega}$. Then an activation-based integral representation is derived and subsequently approximated by random sampling \cite{gonon2023random, gonon2023approximation,xu2025priori,ying2024accurate}. To prove such integral representations, one line of research introduces the assumption that the coefficient function $G(\vec{\omega})$ satisfies certain integrability conditions determined by the hidden-parameter distribution, under which an $L^p$-approximation is derived \cite{gonon2023random, gonon2023approximation,ying2024accurate}. For interface problems, however, controlling PDE residuals and interface conditions requires approximating second-order derivatives, which in turn demands stronger and potentially restrictive assumptions on $G(\vec{\omega})$ and the hidden-parameter distributions. Another line of work modifies the commonly used activation function to ensure absolute integrability, thereby enabling Fourier-analytic arguments and yielding the corresponding integral representation \cite{siegel2020approximation,xu2025priori}. In practice, however, the $\tanh$ activation, which does not belong to $L^1(\R)$, is widely adopted for elliptic interface problems, owing to its simplicity and demonstrating empirical effectiveness \cite{chi2024random,li2025local,ying2024accurate}.

Generalization error is typically characterized by estimating the complexity of the underlying hypothesis class \cite{gyorfi2006distribution}. Recent studies have established such bounds for the Deep Ritz method \cite{e2018deep} and PINNs \cite{raissi2019physics} under various settings \cite{duan2022convergence,jiao2025drm,lu2022machine,lu2021priori,shin2020on}. Most of the results report a convergence rate of order $\mathcal{O}(n^{-1/2})$, where $n$ denotes the number of training samples \cite{duan2022convergence,jiao2025drm,lu2021priori}. Faster $\mathcal{O}(n^{-1})$ rates have also been obtained in a qualitative high-probability sense under the assumption that the boundary conditions are exactly satisfied \cite{lu2022machine}. In contrast, interface problems involve jump conditions that cannot be assumed to hold exactly and call for fully quantitative and tighter generalization bounds.

\textbf{Contributions.} The main contributions of this work are summarized as follows.

\begin{itemize}
\item We establish, to the best of our knowledge, the first $\tanh$-based integral representation for Barron functions valid under differentiation up to second order (see \cref{thm:integral_expression}). The main ingredients are the local smoothness of the Fourier transform of $\tanh(x)/(1 + x^2)$ (see \cref{lem:tanh}) and tailored Fourier-analytic arguments. This representation leads to a $C^2$-approximation bound for RaNNs (see \cref{thm:approximation_thm}) that requires no restrictive assumptions on the hidden-parameter distribution.
Moreover, the underlying framework extends to a broad class of non-integrable activation functions.

\item We establish a quantitative $\mathcal{O}(n^{-1})$ generalization bound for the RaNN-based loss function that jointly enforces the PDE, boundary, and interface conditions by extending localization techniques developed in \cite{bartlett2005local} (see \cref{thm:gen}). This result refines previous analyses that report $\mathcal{O}(n^{-1/2})$ rates or qualitative high-probability guarantees \cite{lu2022machine,lu2021priori,ying2024accurate}.

\item Combining the above analysis with the consistency and convex quadratic structure of the loss function, we establish the first comprehensive error estimate of neural network methods for solving elliptic interface problems with Barron-type solutions, accounting for approximation, generalization, and optimization errors (see \cref{thm:main}).

\end{itemize}

The paper is organized as follows. Section \ref{sec:preliminary} introduces the preliminaries. Section \ref{sec:main_result} presents the RaNN method and the error analysis, with proofs given in section \ref{sec:proof}. Numerical experiments are reported in section \ref{sec:numerical_results}, and section \ref{sec:conclusions} concludes the paper.

\section{Preliminaries}\label{sec:preliminary}

In this section, we introduce the mathematical notation, formulate the elliptic interface problem, and provide a brief review of neural network methods within a domain decomposition framework for solving such problems.

\subsection{Notations}
Throughout this paper, $\Omega\subset\R^d$ denotes a bounded domain. For $k\in\mathbb{N}$, define
\(
C^k(\Omega)=\left\{f: D^{\vec{k}}f\in C(\Omega) \text{ for all}\ |\vec{k}|\leq k \right\}
\)
and let $C_c^\infty(\Omega)$ denote the space of smooth functions with compact support in $\Omega$. For $m\geq 0$, $H^m(\Omega)$ denotes the standard Sobolev space of real-valued functions whose weak derivatives up to order $m$ belong to $L^2(\Omega)$, and its norm is denoted by $\norm{\cdot}_{m,\Omega}$. The dual space of $H_0^m(\Omega)$ is denoted by $H^{-m}(\Omega)$; see \cite{adams2003sobolev} for details.

For $\generalfunc\in L^1(\R^d)$, the Fourier transform and its inverse are defined by
\[
 \F_d[\generalfunc](\vec{\omega})= \int_{\R^d} \generalfunc(\x) e^{-i \vec{\omega} \cdot \x } d \x,
\quad
 \F_d^{-1}[\F_d[\generalfunc]](\x) = \frac{1}{(2\pi)^d} \int _{\R^d} \F_d[\generalfunc](\vec{\omega} ) e^{i \vec{\omega} \cdot \x} d \vec{\omega}.
\]
The Barron space of order $s \ge 1$ is defined by \cite{xu2020finite}
{\small
\begin{equation}
\label{eq:barron_space}
B^s(\Omega)   = \left\{\generalfunc \in L^2(\Omega):\ \|\generalfunc\|_{B^s(\Omega)} = \inf_{\generalfunc_e|_{\Omega} = \generalfunc} \int_{\R^d} (1 + \|\vec{\omega}\|_{1})^s \left| \F_d[\generalfunc_e](\vec{\omega})\right| d\vec{\omega}
< \infty \right\},
\end{equation}
}
where the infimum is taken over all extensions \(\generalfunc_e \in L^{1}(\R^d)\).
For a fixed dataset $\{\x_i\}_{i=1}^n\subset\Omega$ and any $p\in\mathbb{N}$, let $\|\generalfunc\|_{n,p} := \left(\frac{1}{n}\sum_{i=1}^n |\generalfunc(\x_i)|^p\right)^{1/p}$ denote the empirical $L^p$-norm. The restriction of a function $\generalfunc$ to a subdomain $D \subset \Omega$ is denoted by $\generalfunc|_{D}$. The notation $a \lesssim b$ means that $a \le C b$ for some generic constant $C > 0$ independent of the primary parameters of interest (e.g., the sample size $n$ or network width $M$).

\subsection{Elliptic interface problem}
In this work, we consider the following elliptic interface problem:
\begin{equation}
\label{eq:interface_problem}
\begin{aligned}
-\nabla\cdot\left(\beta(\x) \nabla \generalfunc(\x)\right) = & g(\x) ,\quad  &&\x\in \Omega,\\
 \llbracket \beta(\x)  \nabla \generalfunc(\x) \cdot \mathbf{n} \rrbracket   = g_n(\x), \quad
\llbracket \generalfunc(\x)  \rrbracket  = & g_d(\x),\quad    &&\x\in \Gamma ,\\
\generalfunc(\x)= & g_b(\x),\quad  &&\x\in  \partial\Omega.
\end{aligned}
\end{equation}
Here, $\Gamma$ is a closed $C^2$-smooth interface that separates $\Omega$ into an interior subdomain $\Omega_1$ and an exterior subdomain $\Omega_2=\Omega\setminus\Omega_1$. Throughout the paper, for any function $v: \Omega \rightarrow \R$, we denote its restriction to $\Omega_i$ by $v_i$ for $i = 1, 2$. The coefficient $\beta(\x)>0$ is assumed to be piecewise constant with high contrast, i.e., either \( \beta_1/\beta_2  \gg 1\) or \( \beta_2/\beta_1  \gg 1\). The jump of $\generalfunc$ across $\Gamma$ is defined as $\llbracket \generalfunc \rrbracket := \generalfunc_2 - \generalfunc_1$, and $\mathbf{n}$ denotes the unit normal on $\Gamma$ pointing toward $\Omega_2$. Assuming that $g\in L^2(\Omega)$, $g_n\in H^{1/2}(\Gamma)$, $g_d\in H^{3/2}(\Gamma)$, and $g_b\in H^{3/2}(\partial\Omega)$, the interface problem admits a unique solution $\solufunc\in X^{2}:=H^2(\Omega_1)\cap H^2(\Omega_2)\cap L^\infty(\Omega)$, equipped with the norm \(\|\cdot\|_{X^{2}(\Omega)}^2 = \|\cdot\|_{H^2(\Omega_1)}^2 + \|\cdot\|_{H^2(\Omega_2)}^2\), see \cite{chen1998finite,huang2002some, wu2023convergence}. In addition, to ensure that the loss function introduced later is pointwise well-defined, we further assume $g_i\in C^0(\Omega_i)$, $g_n\in C^0(\Gamma)$, $g_d\in C^0(\Gamma)$, and $g_b\in C^0(\partial\Omega)$.

\subsection{Neural network method for interface problems\label{subsec:nn_method_for_inp}}
The interface problem~\eqref{eq:interface_problem} typically exhibits distinct solution behaviors in the two subdomains, motivating the use of neural network methods that combine domain decomposition with physics-informed learning \cite{chi2024random,he2022mesh,li2025local,wu2022interfaced}. In these approaches, the neural network solution $\generalNN(\x;\vec{\theta})$ is represented by two distinct subnetworks on the subdomains $\Omega_1$ and $\Omega_2$, denoted by $\generalNN_1(\x;\vec{\theta}_1)$ and $\generalNN_2(\x;\vec{\theta}_2)$, respectively.
Here, $\vec{\theta}=\{\vec{\theta}_1,\vec{\theta}_2\}$ denotes the set of all trainable parameters. Let
\begin{equation*}
\begin{aligned}
  &\ell_{i}(\x)\big|_{\Omega_i}= ( \beta_i \Delta \generalNN_i(\x) + g_i(\x))^2,\ i=1,2,  &&\ell_{3}(\x)\big|_{\Gamma} =\left( \llbracket \beta\nabla \generalNN(\x) \cdot \mathbf{n} \rrbracket - g_n(\x)\right)^2,  \\
 &\ell_{4}(\x)\big|_{\Gamma} = \left(\llbracket \generalNN(\x) \rrbracket - g_d(\x)\right)^2,
 &&\ell_{5}(\x) \big|_{\partial\Omega}=\left(\generalNN_2(\x)-g_b(\x)\right)^2,
 \label{eq: definition of terms in localized space}
\end{aligned}
\end{equation*}
be the loss terms enforcing the PDE in each subdomain, the interface jump conditions, and the boundary condition. Solving \cref{eq:interface_problem} is then reformulated as minimizing the expected loss function:
\begin{equation}
\label{eq:expected_loss_function}
 L(\generalNN; \vec{\theta})=\sum_{i=1}^5 \lambda_i^2 \mathbb{E}[\ell_i(\x)],
\end{equation}
where the expectation $\mathbb{E}[\cdot]$ is taken with respect to the uniform distribution over the corresponding domain, and the constants $\lambda_i$ ($i=1,\ldots,5$) are penalty weights that balance the contributions of the loss terms. In practice, the expectations are approximated by empirical averages using the following i.i.d. sample sets:
\begin{equation}
\label{eq:training_datasets}
\begin{aligned}
\left\{\x_{1,j}\right\}_{j=1}^{n_1} &\overset{\text{i.i.d.}}{\sim} \text{Unif}(\Omega_1),\
\left\{\x_{2,j}\right\}_{j=1}^{n_2} \overset{\text{i.i.d.}}{\sim} \text{Unif}(\Omega_2),\
\left\{\x_{3,j}\right\}_{j=1}^{n_3} \overset{\text{i.i.d.}}{\sim} \text{Unif}(\Gamma),\\
&\left\{\x_{4,j}\right\}_{j=1}^{n_4} \overset{\text{i.i.d.}}{\sim} \text{Unif}(\Gamma),\
\left\{\x_{5,j}\right\}_{j=1}^{n_5} \overset{\text{i.i.d.}}{\sim}
\text{Unif}(\partial\Omega).
\end{aligned}
\end{equation}
We assume throughout that the sample sizes are comparable, namely, $n_i = \mathcal{O}(n)$. Writing \(\mathbf{n}=(n_1,\ldots,n_5)\), the resulting empirical loss is
\begin{equation}
L^{\mathbf{n}}(\generalNN; \vec{\theta})  = \sum_{i=1}^5 \frac{\lambda_i^2}{n_i}\sum_{j=1}^{n_i} \ell_{i}(\x_{i,j}).
\label{eq:empirical_loss_function}
\end{equation}
For general neural networks, $L^{\mathbf{n}}(\generalNN;\vec{\theta})$ is a nonlinear and nonconvex function of $\vec{\theta}$; hence, it is typically optimized via gradient-based methods.

\section{Main results\label{sec:main_result}}
In this section, we first establish the consistency of the loss function \eqref{eq:expected_loss_function}. We then present a rigorous error analysis of the resulting randomized neural network (RaNN) estimator by decomposing its excess risk into approximation and generalization components.

\subsection{RaNN method for elliptic interface problems}
Given an activation function $\sigma$ and parameters $M,q,C_B>0$, let $\pi_{\vec{\omega},b}$ be any strictly positive density supported on $[-q,q]^{d+1}$. We consider the following shallow RaNN function class:
\begin{equation}
\label{def:stable_shallow_rnn_space}
\Sigma_{M,q,C_B}^\sigma (\Omega) = \left\{
\sum_{m=1}^M a_m\sigma(\vec{\omega}_{m} \cdot\x+b_{m}) \;\middle|\;
   \begin{array}{l}
     \x \in \Omega, \\
     (\vec{\omega}_{m},b_m)\sim \pi_{\vec{\omega},b}, \\
     \sum_{m=1}^M (1+ \|\vec{\omega}_{m}\|)^2|a_m|\leq C_B
   \end{array}
\right\}.
\end{equation}
Unlike standard fully trained networks, the hidden-layer parameters $\{(\vec{\omega}_{m},b_m)\}_{m=1}^M$ are fixed upon random initialization; only the output-layer $\{a_m\}_{m=1}^M$ are trainable.

We now present the RaNN method for elliptic interface problems within the framework described in \cref{subsec:nn_method_for_inp}. For simplicity, RaNNs defined on $\Omega_1$ and $\Omega_2$ are assumed to share the same set of parameters $(M, q, C_B)$. With $\sigma$ chosen to be $\tanh$, the randomized hypothesis space is defined as
\begin{equation}
\label{eq:hypothesis_space_rnn}
F(\Omega) = \Sigma_{M,q,C_B}^{\tanh}  (\Omega_1)  \cap \Sigma_{M, q, C_B}^{\tanh} (\Omega_2),
\end{equation}
and each $\generalNN(\x) \in F(\Omega)$ admits the representation:
\begin{equation*}
\generalNN(\x)=
\begin{cases}
 \sum_{m=1}^{M} a_m^1 \Phi^1_m(\x): = \sum_{m=1}^{M} a_m^1 \tanh(\vec{\omega}_{m}^1 \cdot\x + b_{m}^1),  & \x\in \Omega_1, \\
 \sum_{m=1}^{M} a_m^2 \Phi^2_m(\x): = \sum_{m=1}^{M} a_m^2 \tanh(\vec{\omega}_{m}^2\cdot \x + b_{m}^2),  & \x\in \Omega_2.
\end{cases}
\end{equation*}
Since the sample sizes $n_i$ in \eqref{eq:empirical_loss_function} are chosen to be of comparable sizes, the normalization factors $1/n_i$ can be omitted in the implementation. Moreover, because $F(\Omega)$ depends linearly on the coefficient vector $\vec{a}:=\left[a^1_1,\ldots,a^1_M,a^2_1,\ldots,a^2_M\right]^T$, minimizing the empirical loss \eqref{eq:empirical_loss_function} reduces to a linear least-squares problem:
\begin{equation}
\label{eq:empirical_loss_function_of_rann}
   \hat{\vec{a}} = \min_{\vec{a}}\|A\vec{a}-\vec{b}\|_2^2, \quad  \text{s.t.} \quad \sum_{m=1}^{M} (1 + \|\vec{\omega}_m^i\|)^2 |a_m^i| \le C_B,
    \quad i = 1,2.
\end{equation}
Here
\(\vec{b}=\left[\lambda_1 g_1(\x_{1,j})_{j=1}^{n_1}, \lambda_2 g_2(\x_{2,j})_{j=1}^{n_2},
\lambda_3 g_n(\x_{3,j})_{j=1}^{n_3},
\lambda_4 g_d(\x_{4,j})_{j=1}^{n_4},
\lambda_5 g_b(\x_{5,j})_{j=1}^{n_5}\right]^T,
\)
and the matrix $A$ has the block structure:
\begin{equation*}
 \mathbf{A} =
 \left[
 \begin{array}{cc}
 \lambda_1 \left[-\nabla\cdot(\beta_1\nabla \Phi_m^1(\x_{1,j}))\right]_{n_1\times M}, &\left[\mathbf{0}\right]_{n_1\times M} \\
 \left[\mathbf{0}\right]_{n_2\times M}, &\lambda_2 \left[-\nabla\cdot(\beta_2\nabla \Phi_m^2(\x_{2,j}))\right]_{n_2\times M} \\
 -\lambda_3 \left[\beta_1\frac{\partial\Phi_m^1(\x_{3,j})}{\partial \mathbf{n}}\right]_{n_3\times M}, &\lambda_3 \left[\beta_2\frac{\partial\Phi_m^2(\x_{3,j})}{\partial \mathbf{n}}\right]_{n_3\times M} \\
 -\lambda_4 \left[\Phi_m^1(\x_{4,j})\right]_{n_4\times M}, & \lambda_4 \left[\Phi_m^2(\x_{4,j})\right]_{n_4\times M}\\
 \left[\mathbf{0}\right]_{n_5\times M}, & \lambda_5 \left[\Phi_m^2(\x_{5,j})\right]_{n_5\times M}
 \end{array}
\right].
\end{equation*}

Let $\hat{\generalNN}\in F(\Omega)$ denote the RaNN estimator corresponding to the solution $\hat{\vec{a}}$ of \eqref{eq:empirical_loss_function_of_rann}. We next establish the consistency of the RaNN method, showing that $\hat{\generalNN}$ converges to the exact solution $\generalfunc^*$ as the loss $L(\hat{\generalNN})$ approaches zero.

\begin{theorem}\label{thm:consistency}
The exact solution $\generalfunc^*$ of the interface problem \eqref{eq:interface_problem} satisfies an a priori bound
\begin{equation}
\label{eq:priori_estimate_rnn}
\begin{aligned}
 \|\generalfunc^* \|_{0,\Omega}   \lesssim & \left(\tfrac{1}{\beta_1} + T_\beta\right)\|g\|_{0,\Omega_1}
 + \left(\tfrac{1}{\beta_2} + T_\beta\right)\|g\|_{0,\Omega_2}
 + T_\beta\|g_n\|_{0 ,\Gamma}  \\ &+ \left(1+T_\beta \min\{\beta_1,\beta_2\} \right)\|g_d\|_{0,\Gamma}
 + \left(1+T_\beta \beta_2\right)\|g_b\|_{0,\partial\Omega},
\end{aligned}
\end{equation}
where $T_\beta=1/\max\{\beta_1,\beta_2\}$.
Moreover, for any $\generalNN\in F(\Omega)$,
\begin{equation}
\label{eq:the_first_ieq}
    \|\generalNN-\generalfunc^*\|_{0,\Omega}^2   \;\lesssim\; L(\generalNN).
\end{equation}
\end{theorem}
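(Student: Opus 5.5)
We first prove the a priori bound \eqref{eq:priori_estimate_rnn} for an arbitrary solution $w\in X^2$ of \eqref{eq:interface_problem} with general data $(g,g_n,g_d,g_b)$. We then obtain \eqref{eq:the_first_ieq} by applying this bound to the error $e=\generalNN-\generalfunc^*$. Since $\generalNN|_{\Omega_i}$ is a finite sum of $\tanh$ ridge functions, it is smooth on $\overline{\Omega_i}$, so $e\in X^2$. By linearity, $e$ solves \eqref{eq:interface_problem} with data $\tilde g_i=-\beta_i\Delta \generalNN_i-g_i$, $\tilde g_n=\llbracket\beta\nabla\generalNN\cdot\mathbf n\rrbracket-g_n$, $\tilde g_d=\llbracket \generalNN\rrbracket-g_d$ and $\tilde g_b=\generalNN_2-g_b$. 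The squared $L^2$ norms of these data are exactly $|\Omega_1|\,\bE[\ell_1]$, $|\Omega_2|\,\bE[\ell_2]$, $|\Gamma|\,\bE[\ell_3]$, $|\Gamma|\,\bE[\ell_4]$ and $|\partial\Omega|\,\bE[\ell_5]$. Squaring \eqref{eq:priori_estimate_rnn} for $e$ and absorbing the $\beta$-dependent factors, the domain measures and $\lambda_i^{-2}$ into the constant of $\lesssim$ then gives $\|e\|_{0,\Omega}^2\lesssim L(\generalNN)$. (The constant depends on $\beta$ and $\lambda$ but not on $n$ or $M$.)

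For the a priori bound, we split $w=w_0+w_1$ to remove the inhomogeneous Dirichlet and jump data. Take a lifting $w_1$ supported near $\Gamma\cup\partial\Omega$, for instance $w_1=\tilde g_b$ (a harmonic-type extension of $g_b$ into a collar of $\partial\Omega$) on $\Omega_2$, plus a one-sided extension of $g_d$ from $\Gamma$ into $\Omega_2$ only. This makes $w_0$ continuous across $\Gamma$ with $w_0=0$ on $\partial\Omega$. Because we want $L^2$-type control of the lifting by $L^2$ boundary norms, we will use a very weak (transposition) formulation rather than an $H^1$ lifting. For $\phi\in L^2(\Omega)$, let $z\in H_0^1(\Omega)$ solve the dual problem $-\nabla\cdot(\beta\nabla z)=\phi$ with homogeneous jumps. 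By interface regularity we have $z\in X^2$, with $\|z\|_{X^2}\lesssim \|\phi\|_{0}/\min\beta$. More precisely, we will track $\|\nabla z\|_{0,\Omega_i}\lesssim \|\phi\|_0/\sqrt{\beta_i\cdot\max\beta}$ from the energy estimate, and traces of the flux $\beta\nabla z\cdot\mathbf n$ bounded by $\|\phi\|_0$ times $O(1)$ (and $T_\beta$-weighted) constants.

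Integrating by parts on each $\Omega_i$ gives
\[
\int_\Omega w\phi=\sum_i\int_{\Omega_i} g\,z-\int_\Gamma g_n z+\int_\Gamma g_d\,\beta\nabla z\cdot\mathbf n-\int_{\partial\Omega}g_b\,\beta_2\nabla z\cdot\mathbf n .
\]
Each term is then bounded by Cauchy--Schwarz and trace inequalities, and taking the supremum over $\|\phi\|_0=1$ yields the $L^2$ bound. The $g$ terms produce the factors $\frac1{\beta_i}+T_\beta$ from $\|z\|_{0,\Omega_i}$. The $g_n$ term produces $T_\beta$ from $\|z\|_{0,\Gamma}$. The $g_d$ and $g_b$ terms produce $1+T_\beta\min\beta$ and $1+T_\beta\beta_2$ from the flux traces.

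The main obstacle is obtaining the sharp $\beta$-dependence of the dual solution $z$, in particular bounds on $\|z\|_{0,\Gamma}$ and on the flux traces that are robust in the contrast. The approach is to decompose $z$ into a piece solving $-\beta_i\Delta z_i=\phi$ separately in each subdomain with zero Dirichlet data, which gives the $1/\beta_i$ factors and flux bounds of size $\|\phi\|$. The correction is then a $\beta$-weighted harmonic function matching the flux jump, whose size is governed by $1/\max\beta$ through the energy of the composite problem. If this fine tracking fails, the weaker statement \eqref{eq:the_first_ieq}, with $\beta$-dependent constants absorbed into $\lesssim$, still follows from the generic estimate $\|w\|_0\lesssim C(\beta)(\text{data norms})$, and that is all the later analysis uses.
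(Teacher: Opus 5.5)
There is a genuine gap, located in the step you flag as the main obstacle. The contrast-robust dual bounds you need are false when $\beta_1>\beta_2$, so this step cannot be carried out.

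\textbf{Counterexample.} Take $\Omega$ the disk of radius $R_2$, $\Omega_1$ the concentric disk of radius $R_1$, and $\phi=\mathbf{1}_{\Omega_1}$. The dual solution is
\[
z_1=\frac{R_1^2-r^2}{4\beta_1}+\frac{R_1^2}{2\beta_2}\log\frac{R_2}{R_1},\qquad
z_2=\frac{R_1^2}{2\beta_2}\log\frac{R_2}{r}.
\]
Hence $\|z\|_{0,\Gamma}$, $\|z\|_{0,\Omega_1}$ and $\|\nabla z\|_{0,\Omega_2}$ are all of order $1/\beta_2$. Your plan needs $T_\beta=1/\beta_1$ for the first two and $(\beta_1\beta_2)^{-1/2}$ for the third.

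\textbf{Why it fails.} The interior Dirichlet-to-Neumann map annihilates constants. So the composite energy is coercive on $H_0^1(\Omega)$ only through $\Omega_2$, where the Dirichlet condition sits. Your harmonic correction is therefore of size $1/\beta_2$, not $1/\max\{\beta_1,\beta_2\}$.

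\textbf{The statement itself fails.} The same function is the exact solution of \eqref{eq:interface_problem} with $g=\mathbf{1}_{\Omega_1}$ and $g_n=g_d=g_b=0$. Its $L^2$ norm is of order $1/\beta_2$, while the right-hand side of \eqref{eq:priori_estimate_rnn} equals $2\sqrt{\pi}R_1/\beta_1$. So \eqref{eq:priori_estimate_rnn} fails for $\beta_1\gg\beta_2$ when read with contrast-independent constants, which is what its explicit factors intend. Energy plus Poincar\'e on $\Omega_2$ yields $1/\beta_2$ in place of $T_\beta$ in the $g$- and $g_n$-terms. That agrees with the statement only when $\beta_2\ge\beta_1$.

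\textbf{The paper has the same defect.} It first subtracts subdomain Dirichlet liftings $\tilde v_i$, controlled by very weak Agmon--Douglis--Nirenberg estimates with a splitting parameter $c$ for $g_d$. It then bounds the continuous remainder $\bar v$ by duality, using the adjoint bounds $\|\mu\|_{2,\Omega_1}\lesssim\|w\|_{0,\Omega_1}/\beta_1$ and $\|\mu\|_{3/2,\Gamma}\lesssim T_\beta\|w\|_{0,\Omega}$. Both bounds fail on this example.

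\textbf{Your fallback is the right resolution.} It is legitimate under the paper's convention that the constants in $\lesssim$ need only be independent of $n$ and $M$. Your transposition identity is correct; the signs check out with $\llbracket v\rrbracket=v_2-v_1$ and $\mathbf n$ pointing into $\Omega_2$. Combine it with $\|z\|_{X^2}\le C(\beta)\|\phi\|_{0,\Omega}$ from interface regularity and with trace inequalities. This gives \eqref{eq:priori_estimate_rnn} with a $\beta$-dependent constant. Then \eqref{eq:the_first_ieq} follows exactly as you and the paper argue, by applying it to $e=u-v^*$.

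\textbf{Comparison of routes.} The paper uses two steps: a Dirichlet lifting, then an $H^{-3/2}$--$H^{3/2}$ duality for the flux-jump remainder. Your single duality step is cleaner. It needs only $L^2$ traces of $z$ and $\beta\partial_{\mathbf n}z$, which piecewise $H^2$ regularity provides. It also makes the lifting $w_1$ you sketch unnecessary.

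You should present the $\beta$-explicit tracking as unavailable rather than as the main line. Be aware also that the contrast scaling of the penalty weights in Remark~\ref{remk:penalty_weights} is not supported by \eqref{eq:priori_estimate_rnn} when $\beta_1>\beta_2$.
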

\begin{proof}
The estimate \eqref{eq:priori_estimate_rnn} is obtained by applying the estimate in \cite{huang2002some}, and adapting the argument in \cite{wu2023convergence}, with explicit dependence on the coefficient $\beta$; a detailed derivation is provided in Appendix \ref{proof:prior}.

To prove \eqref{eq:the_first_ieq}, we apply \eqref{eq:priori_estimate_rnn} to the error function $\generalNN-\generalfunc^*$:
    \begin{align*}
        \|\generalNN -\generalfunc^* \|_{0,\Omega}  \lesssim & \|\beta_1 \Delta \generalNN _1 + g_1\|_{0,\Omega_1} + \left\|\beta_2 \Delta \generalNN _2 + g_2\right\|_{0,\Omega_2} + \left\|\llbracket \beta \nabla \generalNN  \cdot \mathbf{n} \rrbracket - g_n\right\|_{0,\Gamma} \\ & + \left\|\llbracket \generalNN  \rrbracket - g_d\right\|_{0,\Gamma} + \|\generalNN _2 - g_b\|_{0,\partial\Omega}.
    \end{align*}
By the definition of $L(\generalNN)$, squaring both sides and applying the AM-QM inequality give the desired estimate.
\end{proof}

\begin{remark}
\label{remk:penalty_weights}
In addition to the consistency analysis, estimate~\cref{eq:priori_estimate_rnn} suggests a natural scaling of the penalty weights with respect to the coefficient contrast. Indeed, the coefficients multiplying the data \(g_d\) and \(g_b\) satisfy
\[
1+T_\beta\min\{\beta_1,\beta_2\}\leq 2,
\quad
1+T_\beta\beta_2\leq 2.
\]
Thus, contrast-dependent scaling is retained only for the first three loss components, as indicated by the a priori estimate, while coefficients of the last two terms can be chosen as fixed contrast-independent constants. Accordingly, we choose \begin{equation}\label{eq:penalty_weights}
\lambda_1=\frac{1}{\beta_1}+T_\beta,\quad
\lambda_2=\frac{1}{\beta_2}+T_\beta,\quad
\lambda_3=T_\beta,\quad
\lambda_4=\lambda_5=10.
\end{equation}
This choice yields robust numerical performance over the range of coefficient contrasts considered in~\cref{sec:numerical_results}.
\end{remark}

\subsection{Theoretical analysis}
We now state the main theorem, which establishes an end-to-end error analysis for solving elliptic interface problems via RaNNs. The complete proof is given in \cref{sec:proof}.

\begin{theorem}
\label{thm:main}
Let $M,d,n\in \mathbb{N}^+$ and $q>0$, and let $\solufunc:\Omega\rightarrow \R$ denote the exact solution to the interface problem~\eqref{eq:interface_problem}. Assume that $\generalfunc_i^*\in B^4(\Omega_i)$ for $i=1,2$. Consider the randomized hypothesis space $F(\Omega)$ in \eqref{eq:hypothesis_space_rnn} with $C_B\geq C_q$, and let $\hat \generalNN$ be the RaNN estimator obtained by solving the linearly constrained least-squares problem~\eqref{eq:empirical_loss_function_of_rann} on the datasets \eqref{eq:training_datasets} with sample sizes satisfying $n_i = \mathcal{O}(n)$. Then,
\begin{align*}
\mathbb{E}\left[\|\hat{\generalNN} - \solufunc\|_{0, \Omega}^2\right] \lesssim \frac{1}{q^2} + \frac{I_q}{M} + \frac{ C_B^4 M \log\left( C_BM(q+1) n \right) }{n}.
\end{align*}
Here, $\mathbb{E}[\cdot]$ denotes the expectation with respect to the random sampling of the datasets and the hidden-layer parameters, and \(C_q, I_q < \infty \) are constants defined by the density \(\pi_{\vec{\omega},b}\) supported on \([-q,q]^{d+1}\).
\end{theorem}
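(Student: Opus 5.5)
The plan is to combine the consistency estimate \eqref{eq:the_first_ieq} of \cref{thm:consistency} with a decomposition of the excess risk into approximation and generalization parts. Optimization error does not enter, because \eqref{eq:empirical_loss_function_of_rann} is a convex, linearly constrained least-squares problem whose exact minimizer $\hat{\generalNN}$ we analyze.

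\textbf{Step 1: reduce to the expected loss.} By \eqref{eq:the_first_ieq}, $\mathbb{E}\|\hat{\generalNN}-\solufunc\|_{0,\Omega}^2\lesssim \mathbb{E}[L(\hat{\generalNN})]$. Fix any $\generalNN^\dagger\in F(\Omega)$, which may depend on the hidden parameters but not on the samples. Write
\[
L(\hat{\generalNN}) = \big[L(\hat{\generalNN})-2L^{\mathbf{n}}(\hat{\generalNN})\big] + 2L^{\mathbf{n}}(\hat{\generalNN}) \le \big[L(\hat{\generalNN})-2L^{\mathbf{n}}(\hat{\generalNN})\big] + 2L^{\mathbf{n}}(\generalNN^\dagger).
\]
The inequality uses the optimality of $\hat{\generalNN}$. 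Since $\mathbb{E}_{\text{samples}}L^{\mathbf{n}}(\generalNN^\dagger)=L(\generalNN^\dagger)$, we get
\[
\mathbb{E}L(\hat{\generalNN})\le 2\,\mathbb{E}L(\generalNN^\dagger)+\mathbb{E}\sup_{\generalNN\in F}\big[L(\generalNN)-2L^{\mathbf{n}}(\generalNN)\big]_+ .
\]
The factor $2$ in front of $L^{\mathbf{n}}$ is the standard device that allows a fast rate.

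\textbf{Step 2: approximation term.} Each loss $\ell_i$ is a squared residual involving derivatives up to order two of $\generalNN_i-\generalfunc_i^*$; the exact solution makes every residual vanish. Hence
\[
L(\generalNN^\dagger)\lesssim \sum_i \|\generalNN^\dagger_i-\generalfunc_i^*\|_{C^2(\overline{\Omega_i})}^2 ,
\]
using continuity of traces for the $C^2$ norm on $\Gamma$ and $\partial\Omega$. Take $\generalNN_i^\dagger$ from the $C^2$-approximation bound \cref{thm:approximation_thm}, which rests on the tanh integral representation \cref{thm:integral_expression} applied to $\generalfunc_i^*\in B^4(\Omega_i)$. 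That bound should give an expected squared $C^2$ error of order $q^{-2}+I_q/M$: the $q^{-2}$ part comes from truncating the representation to $[-q,q]^{d+1}$, and the $I_q/M$ part is the Monte Carlo variance of sampling $M$ hidden neurons from $\pi_{\vec{\omega},b}$. The coefficients satisfy the weighted $\ell^1$ constraint with constant $C_q$, so $C_B\ge C_q$ guarantees $\generalNN^\dagger\in F(\Omega)$.

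\textbf{Step 3: generalization term (main obstacle).} Condition on the hidden parameters; the class is then a subset of a $2M$-dimensional linear space. The constraint $\sum(1+\|\vec{\omega}\|)^2|a_m|\le C_B$, together with $|\tanh^{(k)}|\le C$ for $k\le 2$, bounds each residual uniformly by $O(C_B)$ plus data norms. The loss functions $\ell_i$ are therefore bounded by $O(C_B^2)$, and their variance is controlled by their mean: $\mathbb{E}\ell^2\le C_B^2\,\mathbb{E}\ell$. This Bernstein condition is what makes local Rademacher complexity \cite{bartlett2005local} applicable. For each $i$, a covering number bound of the form
\[
\log N(\epsilon)\lesssim M\log\big(C_BM(q+1)/\epsilon\big)
\]
follows from Lipschitz dependence on $\vec a$ in sup norm; the factor $q+1$ enters through $\|\vec\omega\|\le\sqrt{d}\,q$ in the derivatives.

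Feeding this into the fixed point of the localized complexity yields, for each term,
\[
\mathbb{E}\sup_{\generalNN}\big[\mathbb{E}\ell_i-2\,\tfrac1{n_i}\textstyle\sum_j\ell_i(\x_{i,j})\big]_+ \lesssim C_B^4 M\log(C_BM(q+1)n)/n .
\]
The hardest part is carrying the peeling/fixed-point argument through quantitatively in expectation rather than with high probability. This requires integrating the tail bound over the confidence level and tracking the powers of $C_B$ from the bound and variance constants. I would first try a direct chaining argument with the ratio-type (star-hull) normalization, summed over the five sampling distributions with $n_i\asymp n$.

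\textbf{Step 4: combine.} Adding Steps 2 and 3 and taking the expectation over the hidden parameters gives the stated bound.
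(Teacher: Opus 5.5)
Your proposal is correct and follows essentially the same route as the paper: \cref{thm:consistency} reduces the error to $\mathbb{E}[L(\hat{\generalNN})]$, the $C^2$ approximation bound of \cref{thm:approximation_thm} supplies the $q^{-2}+I_q/M$ terms, and a localized Rademacher fixed-point argument with covering numbers of size $M\log(C_BM(q+1)/\epsilon)$, as in \cref{thm:gen}, supplies the $C_B^4M\log(\cdot)/n$ term. The differences are only technical: you use $\mathbb{E}[L^{\mathbf{n}}(\generalNN^\dagger)]=L(\generalNN^\dagger)$, localize each loss term separately, and integrate the tail bound, whereas the paper bounds $L^{\mathbf{n}}(\generalNN)\lesssim\|\generalNN-\solufunc\|_{X^{\infty}(\Omega)}^2$ pointwise, localizes on the total loss $L(\generalNN)\le r$, takes $t=\log n$, and absorbs the $O(1/n)$-probability failure event using $L\lesssim C_B^2$.
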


Owing to the convexity of the underlying optimization problem, the main task is to derive upper bounds for the approximation and generalization errors, which correspond, respectively, to the first two terms and the last term in the error bound of \cref{thm:main}. A key difference between our analysis and that of standard feedforward neural networks lies in the treatment of the approximation error.

\begin{theorem}\label{thm:approximation_thm}
Let $q>0$, $M, d\in \mathbb{N}^+$. Consider the randomized hypothesis space $\Sigma_{M,q,C_B}^{\tanh}(\Omega)$ defined by \eqref{def:stable_shallow_rnn_space} with $C_B\geq C_q$.
Then, for any \(\generalfunc \in B^4(\Omega)\),
it holds that
\[
\mathbb{E}_{\{\vec{\omega}, b\}} \!\left[
\inf_{\generalNN \in \Sigma_{M,q,C_B}^{\tanh}(\Omega)} \left\| \generalNN - \generalfunc \right\|^2_{C^2(\Omega)}
\right]
\lesssim \frac{1}{q^2} + \frac{I_q}{M},
\]
where $\mathbb{E}_{\{\vec{\omega}, b\}}[\cdot]$ denotes the expectation with respect to the random sampling of the hidden-layer parameters, and $C_q, I_q<\infty$ are constants defined by the density \(\pi_{\vec{\omega},b}\) supported on \([-q,q]^{d+1}\). In particular, there exists a choice of parameter distribution for which \( C_q, I_q \) are independent of \(q\).
\end{theorem}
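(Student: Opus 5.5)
The plan is to write $\generalfunc$, up to a controllable truncation error, as an expectation over the hidden-parameter density $\pi_{\vec{\omega},b}$, and then to approximate that expectation by the empirical average over the $M$ sampled neurons (a Monte Carlo argument in $C^2$).

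\textbf{Step 1: a $\tanh$-based integral representation.} This is the content of the representation announced in the introduction (\cref{thm:integral_expression}), which I would prove first. Take an extension $\generalfunc_e$ with $\int(1+\|\vec{\omega}\|_1)^4|\F_d[\generalfunc_e]|<\infty$ and write $\generalfunc(\x)=(2\pi)^{-d}\int \F_d[\generalfunc_e](\vec{\omega})e^{i\vec{\omega}\cdot\x}d\vec{\omega}$.

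The key one-dimensional ingredient is \cref{lem:tanh}. Set $\psi(t)=\tanh(t)/(1+t^2)\in L^1(\R)$. Its Fourier transform $\hat\psi$ is smooth and nonvanishing near some frequency $r_0$. For $\vec{\omega}=\|\vec{\omega}\|\vec{\xi}$, rescale so that
\[
e^{i\vec{\omega}\cdot\x}=\frac{1}{\hat\psi(r_0)}\int_{\R}\psi\!\left(\tfrac{\|\vec{\omega}\|}{r_0}\vec{\xi}\cdot\x+b\right)e^{-i r_0 b}\,db .
\]
This expresses $e^{i\vec{\omega}\cdot\x}$ through $\tanh$-neurons with weight $\|\vec{\omega}\|/r_0$, modulated by the factor $1/(1+t^2)$.

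The modulating factor is then removed by decomposing $1/(1+t^2)$ (equivalently, writing $\psi$ as a $\tanh$ times a rational weight and integrating by parts in $b$). The result should be an identity of the form
\[
\generalfunc(\x)=\int_{\R^{d+1}}\tanh(\vec{w}\cdot\x+b)\,\rho(\vec{w},b)\,d\vec{w}\,db,\qquad \int (1+\|\vec{w}\|)^2|\rho|\lesssim\|\generalfunc\|_{B^4(\Omega)} .
\]
Two extra orders of Barron regularity are spent here: two for differentiation up to order two, and two to guarantee decay of $\rho$ in $(\vec{w},b)$. Because all relevant integrals converge absolutely, the identity may be differentiated under the integral sign up to second order.

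\textbf{Step 2: truncation to the cube and importance weights.} Restrict $\rho$ to $[-q,q]^{d+1}$ and define $\generalfunc_q$ as the integral over the cube. The tail $\generalfunc-\generalfunc_q$ should be bounded in $C^2$ using the moment decay of $\rho$, aiming for $\|\generalfunc-\generalfunc_q\|_{C^2}\lesssim 1/q$, which gives the $1/q^2$ term.

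On the cube write $\generalfunc_q(\x)=\mathbb{E}_{\pi}\big[\tanh(\vec{\omega}\cdot\x+b)\,\rho/\pi_{\vec{\omega},b}\big]$; this is legitimate because $\pi$ is strictly positive on the cube. Define
\[
C_q=\sup(1+\|\vec{\omega}\|)^2|\rho/\pi|,\qquad I_q=\mathbb{E}_\pi\big[(1+\|\vec{\omega}\|)^4|\rho/\pi|^2\big].
\]
Both are finite since $\pi$ is bounded below on a compact set. If $\pi$ is chosen proportional to $|\rho|$ (suitably mixed with the uniform density), then $C_q$ and $I_q$ become $q$-independent.

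\textbf{Step 3: Monte Carlo in $C^2$.} Take $a_m=\rho(\vec{\omega}_m,b_m)/(M\pi(\vec{\omega}_m,b_m))$. Then $\sum_m(1+\|\vec{\omega}_m\|)^2|a_m|\le C_q\le C_B$, so the resulting network lies in $\Sigma^{\tanh}_{M,q,C_B}$. Independence of the samples gives
\[
\mathbb{E}\,|D^{\vec{k}}(\generalNN-\generalfunc_q)(\x)|^2\le I_q/M \quad\text{pointwise, for } |\vec{k}|\le 2 .
\]

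\textbf{Expected main obstacle.} Upgrading this pointwise variance bound to a bound on the expected supremum over $\Omega$ is the delicate point. I would first take the sup norm to be interpreted via a Sobolev embedding in $H^{s}$ with $s>d/2+2$, integrating the pointwise variance over $\Omega$. If that fails, I would use a symmetrization and Rademacher bound for the class $\{D^{\vec{k}}\tanh(\vec{\omega}\cdot\x+b)\}$, which would cost at most a logarithmic factor absorbed into the constant.

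The truly hard step, however, is Step 1. The difficulty is producing an absolutely convergent $\tanh$ representation with the weight $(1+\|\vec{w}\|)^2$ even though $\tanh\notin L^1(\R)$, and this relies on the local smoothness and nonvanishing of $\hat\psi$ near $r_0$ from \cref{lem:tanh}.
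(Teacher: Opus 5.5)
Your overall architecture is the paper's: an integral representation (\cref{thm:integral_expression}), truncation to the cube with an $\mathcal{O}(q^{-1})$ error in $C^2$ (\cref{lem:truncation_estimate}), and importance-sampled Monte Carlo. In that last step you take $a_m=\coeff/(M\pi_{\vec{\omega},b})$, $C_q$ is a supremum, and the supremum over $\x$ is handled by symmetrization (\cref{lem:discrete_error_of_appro}).

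\textbf{The gap is Step 1.} It carries the whole argument, and you leave it at ``the result should be''. Moreover, the mechanism you sketch cannot produce it.
\begin{itemize}
\item The single-frequency identity $e^{i\vec{\omega}\cdot\x}=\F_1[\psi](r_0)^{-1}\int\psi\bigl(\tfrac{\|\vec{\omega}\|}{r_0}\vec{\xi}\cdot\x+b\bigr)e^{-ir_0b}\,db$ has $b$-profile $e^{-ir_0b}$, which does not decay. Barron regularity of $\generalfunc$ cannot help here, since it only controls decay in $\vec{\omega}$.
\item The weight $1+t^2$ is evaluated at $t=\tfrac{\|\vec{\omega}\|}{r_0}\vec{\xi}\cdot\x+b$, which depends on $\x$. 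Integrating by parts in $b$ against $e^{-ir_0b}$ only produces powers of $r_0$, never the factor $t^2$ needed to turn $\psi$ into $\tanh$.
\item Concretely, $\int\tanh(a+b)e^{-ir_0b}\,db$ equals $e^{ir_0a}(1-\partial_w^2)\F_1[\psi](r_0)$ only in the distributional sense. It is not even conditionally convergent, so no coefficient with $\int(1+\|\vec{w}\|)^2|\rho|<\infty$ arises this way.
\end{itemize}

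\textbf{The missing idea} is to spread each Fourier mode over a band of neuron scales and to do the $\psi\to\tanh$ conversion on the frequency side. The paper does this as follows.
\begin{itemize}
\item It fixes $\rho\in C_c^\infty(\R)$ supported in $[w_1,w_2]$, where $0\notin[w_1,w_2]$ and $\F_1[\psi]$ is smooth.
\item It uses a compactly supported extension $\generalfunc_E=\chi_A\generalfunc_A$, so that $\F_d[\generalfunc_E]$ is smooth.
\item It sets $\coeff^{\#}(\vec{\omega},w)=C\,\F_d[\generalfunc_E](\vec{\omega}w)\rho(w)|w|^d$, i.e.\ frequency $=$ hidden weight $\times$ scale.
\item The factor $\langle z\rangle^t$ is realized as $(1-\partial_w^2)^t e^{iwz}$ and moved by integration by parts in $w$ onto the smooth profile. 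The normalization $C^{-1}=(2\pi)^{d-1}\int(1-\partial_w^2)^t[\rho]\,\F_1[\psi]\,dw\neq0$ is arranged by the choice of $\rho$.
\end{itemize}
For fixed $\vec{\omega}$ the $b$-profile is then $\coeff(\vec{\omega},b)=\tfrac{C}{2\pi}\int\generalfunc_E(\x)\,\F_1[\rho|w|^d](\vec{\omega}\cdot\x-b)\,d\x$, which decays rapidly for $|b|\gtrsim R\|\vec{\omega}\|$. This yields $\int|b|^{\gamma_1}\|\vec{\omega}\|^{\gamma_2}|\coeff|\lesssim\|\generalfunc\|_{B^s(\Omega)}$ for $1+\gamma_1+\gamma_2\le s$. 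With $s=4$, these third-order joint moments in $(b,\vec{\omega})$ are what give the $1/q$ truncation bound for second derivatives; the weight $(1+\|\vec{w}\|)^2$ alone would not.

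\textbf{Two smaller corrections.}
\begin{itemize}
\item With $\pi\propto|\rho|$, your $C_q=\sup(1+\|\vec{\omega}\|)^2|\rho|/\pi$ grows like $q^2$. Your $I_q$ would then need a fourth moment that $B^4$ does not control uniformly in $q$. Weight the density by $(1+\|\vec{\omega}\|)^2$ instead; the paper uses the power $3$, matched to its definition of $I_q$.
\item The Sobolev-embedding route for the supremum needs derivatives of order $>d/2+2$, hence in general more regularity than $B^4$ and a different $I_q$. The paper instead combines symmetrization with the contraction \cref{lem:ying2024accurate}. After contraction the process is linear in $\x$, which gives $\sqrt{I_q/M}$ with no logarithm; a $\log M$ factor could not be absorbed into the constant anyway.
\end{itemize}
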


As shown in \cref{thm:approximation_thm}, the approximation error of RaNNs depends not only on the network width $M$, but also on the distribution of the hidden parameters, as reflected by the constants  $q$ and $I_q$.
The proof relies on the $\tanh$-based integral representation introduced in \cref{thm:integral_expression}. A similar integral-representation viewpoint also appears in several approximation results for standard fully trained shallow neural networks, where random sampling is employed merely as an auxiliary step to prove the existence of suitable network weights \cite{barron1993universal, caragea2023neural, ma2019priori, siegel2020approximation}.
In these works, the approximation rate over Barron-type spaces is typically of order $\mathcal{O}(1/M)$, while the hidden-parameter distributions often depend implicitly on the unknown target function. In contrast, \cref{thm:approximation_thm} accommodates any hidden-parameter distribution supported on a cube with strictly positive density, thereby facilitating a priori, target-independent sampling procedures. In addition, it applies directly to the standard $\tanh$ activation function, which is inherently compatible with practical implementations \cite{chi2024random, li2025local, ying2024accurate}.

We next describe the generalization results of RaNNs for solving interface problems, thereby yielding the last error component in \cref{thm:main}.

\begin{theorem}\label{thm:gen}
Let $\hat{\generalNN}$ and $\solufunc$ denote the RaNN estimator and the exact solution, respectively, as in \cref{thm:main}. The following estimate holds:
\[
\mathbb{E}_{\{\x\}}\big[L(\hat \generalNN)\big] \ \lesssim\
 \inf_{\generalNN \in F(\Omega)}   \|\generalNN - \generalfunc^* \|^2_{X^{\infty}( \Omega)} +  \frac{C_B^4 M \log (C_BM(q+1)n)}{n},
\]
where $\mathbb{E}_{\{\x\}}[\cdot]$ denotes the expectation with respect to the random sampling of datasets, and the $C^2$-based composite norm is defined by
$ \|\cdot \|^2_{X^{\infty}(\Omega)} =  \| \cdot\|^2_{C^2(\overline{\Omega}_1)} + \|\cdot \|^2_{C^2(\overline{\Omega}_2)}$.
\end{theorem}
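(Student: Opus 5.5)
The approach is a localized (fast-rate) analysis of empirical risk minimization over the fixed-feature class $F(\Omega)$. We condition on the hidden parameters throughout, since the estimate is in expectation over the data only.

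\textbf{Step 1: excess-loss decomposition.} Because $\solufunc$ satisfies \eqref{eq:interface_problem} exactly, each $\ell_i(\generalNN)$ can be written as the square of a residual that is linear in $\generalNN-\solufunc$. For example, $\ell_1=(\beta_1\Delta(\generalNN_1-\solufunc_1))^2$, and the interface terms are squares of jumps of $\generalNN-\solufunc$ or of its flux. Hence $L(\generalNN)=\sum_i\lambda_i^2\bE[h_i(\generalNN)^2]$, where $h_i$ is a linear differential/trace operator applied to $\generalNN-\solufunc$, and $L(\solufunc)=0$.

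Fix $\generalNN^*\in F(\Omega)$ nearly attaining $\inf_{F}\|\generalNN-\solufunc\|_{X^\infty}$. Then $L(\generalNN^*)\lesssim\|\generalNN^*-\solufunc\|_{X^\infty}^2$, since each $h_i$ involves derivatives of order at most two and all $\beta_i,\lambda_i$ are treated as constants. We write
\[
L(\hat\generalNN)=\big[L(\hat\generalNN)-2L^{\mathbf n}(\hat\generalNN)\big]+2L^{\mathbf n}(\hat\generalNN)\le \big[L(\hat\generalNN)-2L^{\mathbf n}(\hat\generalNN)\big]+2L^{\mathbf n}(\generalNN^*).
\]
The inequality holds because $\hat\generalNN$ minimizes the empirical loss over $F(\Omega)$. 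Since $\bE L^{\mathbf n}(\generalNN^*)=L(\generalNN^*)$, it remains to show that $\bE\sup_{\generalNN\in F}[L(\generalNN)-2L^{\mathbf n}(\generalNN)]\lesssim C_B^4M\log(C_BM(q+1)n)/n$.

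\textbf{Step 2: boundedness and the Bernstein condition.} For $\generalNN\in F$ the constraint $\sum(1+\|\vec\omega_m\|)^2|a_m|\le C_B$ gives $\|\generalNN\|_{C^2}\lesssim C_B$. This uses the fact that $\tanh,\tanh',\tanh''$ are bounded, and that each derivative of $\tanh(\vec\omega\cdot\x+b)$ produces one factor of $\|\vec\omega\|$. Since $\solufunc\in B^4$ (or at least in $C^2$ on each $\overline\Omega_i$), each loss $\ell_i$ is bounded by $K\lesssim C_B^2$. A nonnegative bounded loss satisfies $\mathrm{Var}(\ell_i)\le K\,\bE\ell_i$. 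This is exactly the variance-to-mean condition required for the local Rademacher / Bernstein-type argument of \cite{bartlett2005local}.

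\textbf{Step 3: the uniform ratio bound via covering.} The class $\{\ell_i(\generalNN):\generalNN\in F\}$ is parametrized by $\vec a\in\R^{2M}$ in an $\ell^1$-weighted ball. Moreover, $\generalNN\mapsto\ell_i$ is Lipschitz in sup norm with constant $\lesssim C_B\cdot(1+q)^2$, because derivatives bring factors of $\|\vec\omega\|\le\sqrt{d}\,q$. A standard volumetric argument then gives an $\epsilon$-net of size $(C_B^2(q+1)/\epsilon)^{\mathcal O(M)}$.

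Applying Bernstein's inequality to $\bE\ell-2\cdot\frac1{n_i}\sum\ell$ on each net element, the linear factor $2$ absorbs the variance term: $\Pr[\bE\ell-2\hat\bE\ell>t]\le e^{-cnt/K}$. A union bound followed by integration over $t$ yields
\[
\bE\sup_{F}[L-2L^{\mathbf n}]\lesssim \frac{K\,M\log(C_B(q+1)n)}{n}+\epsilon.
\]
Choosing $\epsilon=1/n$ and noting $K\lesssim C_B^2$ (up to an extra factor $C_B^2$ coming from the Lipschitz constant and the weights) gives the claimed $C_B^4M\log(C_BM(q+1)n)/n$. The loss $L$ has five terms with $n_i\asymp n$, so we sum the five bounds.

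\textbf{Main obstacle.} The delicate point is obtaining the $1/n$ rate rather than $1/\sqrt n$. This hinges on the offset $L-2L^{\mathbf n}$ combined with the variance bound $\mathrm{Var}(\ell)\le K\,\bE\ell$. The latter holds only because the losses are squared residuals that vanish at $\solufunc$, i.e.\ because of the consistency of the formulation.

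A second subtlety is tracking the powers of $C_B$ and $q$ in the envelope and Lipschitz constants, including the $\lambda_i$ and $\beta$ factors in the flux term, so that they combine into $C_B^4$ and into the logarithm. If the covering route turns out too lossy, I would instead bound the local Rademacher complexity of the linear $2M$-dimensional class directly, since its fixed point is $\mathcal O(KM/n)$.
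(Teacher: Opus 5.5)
Your argument is correct, but it takes a different route from the paper's. Both proofs establish the same non-sharp oracle inequality $L(\hat u)\le 2L^{\mathbf n}(\hat u)+(\text{complexity term})$ and then compare with an element of $F(\Omega)$.

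\textbf{How the paper proceeds.} It obtains the uniform ratio bound $L(u)-L^{\mathbf n}(u)\le\tfrac12\bigl(L(u)+r_0\bigr)$ with the Bartlett--Bousquet--Mendelson localization machinery. This consists of:
\begin{itemize}
\item Talagrand's inequality applied to the normalized classes $(\mathbb{E}\ell_i-\ell_i)/(L(u)+r)$;
\item peeling over the shells $\mathcal{M}_{4^{k+1}r}\setminus\mathcal{M}_{4^k r}$;
\item a sub-root function built from star hulls via the contraction lemma, together with a population-to-empirical localization step;
\item Dudley's entropy integral with empirical $L^1$ covering numbers.
\end{itemize}
This gives a high-probability bound. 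With $t=\log n$, it is converted to an expectation using $L(u)\lesssim C_B^2$ on the failure event.

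\textbf{How your route differs.} You use the fact that, for fixed hidden parameters, $F(\Omega)$ is a compact subset of a $2M$-dimensional linear space. This gives a data-independent sup-norm net of size $(C_B^2(q+1)/\epsilon)^{\mathcal O(M)}$. Three elementary steps then give the expectation bound directly, with no fixed-point analysis:
\begin{itemize}
\item Bernstein's inequality for $\mathbb{E}\ell-2\widehat{\mathbb{E}}_n\ell$, where the factor $2$ absorbs the variance $\le K\,\mathbb{E}\ell$;
\item a union bound over the net;
\item integration of the tail.
\end{itemize}
Your approach is much more elementary. The paper's approach is heavier, but it relies only on empirical covering numbers and sub-root bounds, which is what the paper advertises as carrying over to other neural PDE solvers.

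\textbf{Sharper constant.} Your route actually yields the prefactor $K\lesssim C_B^2$ rather than $C_B^4$. The Lipschitz constant of $a\mapsto\ell_i$ enters only the logarithm. So your appeal to an ``extra factor $C_B^2$ coming from the Lipschitz constant'' is unnecessary. You simply weaken $C_B^2$ to $C_B^4$ using $C_B\gtrsim 1$, which the paper also assumes implicitly through $\hat C\lesssim C_B$.

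\textbf{A small correction to your ``main obstacle'' remark.} The bound $\mathrm{Var}(\ell)\le K\,\mathbb{E}\ell$ holds for any loss with $0\le\ell\le K$. It does not need the residuals to vanish at $v^*$. What consistency of the formulation buys is that the comparator term $2L(u^*)$ is controlled by $\|u^*-v^*\|^2_{X^\infty(\Omega)}$.
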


\Cref{thm:gen} establishes a generalization bound of order $\mathcal{O}(n^{-1})$, improving upon the $\mathcal{O}(n^{-1/2})$ rates typically reported in prior work \cite{duan2022convergence,jiao2025drm,lu2021priori, ying2024accurate}. The main ingredient of the proof lies in extending the localization techniques developed in \cite{bartlett2005local}, which refine complexity control by restricting the analysis to a data-dependent localized hypothesis class. Similar localization arguments have been used in \cite{lu2022machine} for elliptic PDEs.
In contrast, our work makes distinct contributions by addressing interface problems and by providing a detailed probabilistic characterization of the generalization error (see \cref{lem:gen}); the latter yields an explicit bound in expectation that enables quantitative error evaluation.

\section{Proof of main results\label{sec:proof}}
\Cref{thm:main} is a direct consequence of Theorems~\ref{thm:consistency}, \ref{thm:approximation_thm} and \ref{thm:gen}.
\begin{proof}[Proof of \cref{thm:main}]
\begin{equation*}
\begin{aligned}
\mathbb{E}\left[\|\hat{\generalNN} - \generalfunc^*\|_{0, \Omega}^2\right]
\underset{\text{(\cref{thm:consistency})}}{\lesssim} &\ \mathbb{E}\left[L(\hat{\generalNN})\right] \\
\underset{\text{(\cref{thm:gen})}}{\lesssim}&\ \mathbb{E}_{\{\vec{\omega}, b\}} \left[ \inf_{u \in F(\Omega)} \|\generalNN - \generalfunc^* \|^2_{X^{\infty}} \right]\! +\! \frac{C_B^4 M \log\left( C_BM(q+1) n \right) }{n}\\
\underset{\text{(\cref{thm:approximation_thm})}}{\lesssim} &\ \frac{1}{q^2} + \frac{I_q}{M} + \frac{C_B^4 M \log\left( C_BM(q+1) n  \right) }{n},
\end{aligned}
\end{equation*}
where $\mathbb{E}[\cdot]$ denotes the expectation with respect to both the random sampling of datasets and the hidden-layer parameters.
\end{proof}
The remainder of this section is devoted to the proofs of Theorems~\ref{thm:approximation_thm} and \ref{thm:gen}, which quantify the approximation and generalization errors of RaNNs for elliptic interface problems in subsections \ref{sec:appro} and \ref{sec:genera}, respectively.

\subsection{Approximation error}\label{sec:appro}

We begin by introducing the assumptions on the activation function used in the analysis.

\begin{assumption}\label{ass:activation1}
The activation function $\sigma \in C^2(\R)$ is non-constant, and $\sigma$ and its first and second derivatives are bounded and Lipschitz continuous on $\R$.
\end{assumption}

\begin{assumption}\label{ass:activation2}
The activation function \( \sigma \) admits the factorization
\(
\sigma(x) = (1 + x^2)^t \psi(x) = \langle x \rangle^t \psi(x)
\)
for a constant \( t \geq 0 \), where \( \psi, \mathcal{F}_1[\psi] \in L^1(\R)\); \( \mathcal{F}_1[\psi] \) is smooth and not identically zero on a closed interval \( [w_1, w_2] \) that excludes the origin.
\end{assumption}

Note that the above assumptions are mild; \cref{lem:tanh} confirms that the commonly used activation \( \tanh(x) \) satisfies all the required conditions.

\begin{lemma}
\label{lem:tanh}
The activation function \( \tanh(x) \) satisfies Assumptions~\ref{ass:activation1} and~\ref{ass:activation2}.
\begin{proof}
By definition, \( \tanh(x)\in  C^\infty(\R) \) satisfies all the conditions in \cref{ass:activation1}. We next verify \cref{ass:activation2}. Define \(\psi(x) :=  \tanh(x)/(1 + x^2)\), which corresponds to the case \( t = 1 \). Since $\tanh(x)$ is bounded and $(1 + x^2)^{-1}$ decays quadratically, it follows that \( \psi \) and its first and second derivatives all belong to \( L^1(\R)\). Standard properties of the Fourier transform then imply $|\mathcal{F}_1[\psi](w)| \leq \|\psi\|_{L^1(\R)}$ and $|\mathcal{F}_1[\partial_x^2 \psi](w)| \leq \|\partial_x^2 \psi\|_{L^1(\R)}.$
Using the identity
\(
w^2 \mathcal{F}_1[\psi](w) = -\mathcal{F}_1[\partial_x^2 \psi](w),
\)
we obtain
\[
(1 + w^2)\, |\mathcal{F}_1[\psi](w)| \leq \|\psi\|_{L^1(\R)} + \|\partial_x^2 \psi\|_{L^1(\R)} := C,
\]
which yields the decay estimate $|\mathcal{F}_1[\psi](w)| \leq \frac{C}{1 + w^2}.$ Hence, \( \mathcal{F}_1[\psi](w) \in L^1(\R) \). In addition, \( \psi(x) \) is odd, and its Fourier transform admits the representation
\[
\begin{aligned}
   \mathcal{F}_1[\psi](w)
   &= -2i \int_0^{\infty} \frac{\sin(xw)}{1 + x^2} dx + 4i \int_0^{\infty} \frac{\sin(xw)}{(1 + x^2)(e^{2x} + 1)} dx.
\end{aligned}
\]
For the first integral, an explicit formula is available \cite{gradshteyn2014table}:
\[
\int_0^{\infty} \frac{\sin(w x)}{1 + x^2} dx =
\begin{cases}
\frac{1}{2} \left[e^{-w} \overline{\mathrm{Ei}}(w) - e^{w} \mathrm{Ei}(-w)\right], & w > 0, \\
0, & w = 0, \\
-\frac{1}{2} \left[e^{w} \overline{\mathrm{Ei}}(-w) - e^{-w} \mathrm{Ei}(w)\right], & w < 0,
\end{cases}
\]
where \( \mathrm{Ei}(w) \) denotes the exponential integral \cite{gradshteyn2014table}.
The second integral decays exponentially in \( x \), ensuring absolute convergence for all \( w \). Thus, \( \mathcal{F}_1[\psi](w) \) is well-defined and integrable on any compact interval excluding \( w = 0 \).
\end{proof}
\end{lemma}

\begin{remark}
By arguments analogous to those used for $\tanh(x)$, one can verify that other widely used activation functions, such as the sigmoid $\sigma(x) = (1+e^{-x})^{-1}$ and the sine $\sigma(x) = \sin(x)$, also satisfy Assumptions~\ref{ass:activation1} and~\ref{ass:activation2}.
\end{remark}

The main ingredient for approximation is the following integral representation. As described in \cref{subsec:nn_method_for_inp}, the approximations over $\Omega_1$ and $\Omega_2$ can be treated independently; therefore, it suffices to analyze RaNNs on a single subdomain. For notational convenience, we use $\Omega$ to denote a generic bounded subdomain.

\begin{theorem}
\label{thm:integral_expression}
Suppose that the activation function $\sigma$ satisfies Assumptions~\ref{ass:activation1} and~\ref{ass:activation2}. For any multi-index $\vec{k} \in \mathbb{N}^d$ with $|\vec{k}| \leq 2$ and any \(\generalfunc \in B^s(\Omega)\) with $|\vec{k}|+1 \leq s$, there exists a function
\(\coeff : \R^d \times \R \to \R\) such that
\begin{equation*}
D^{\vec{k}} \generalfunc(\x ) = \int_{\R^d} \int_{\R} D^{\vec{k}} \sigma(\vec{\omega} \cdot \x  + b)\, \coeff(\vec{\omega}, -b)\, db\, d\vec{\omega},
\quad \forall \x  \in \Omega.
\end{equation*}
\end{theorem}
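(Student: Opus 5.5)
The plan is to follow the Fourier-analytic route of Siegel--Xu type integral representations, with one change: $\sigma=\tanh$ is not integrable, so its Fourier transform exists only as a tempered distribution. I will use \cref{ass:activation2} to show that this distribution coincides with a smooth, not identically vanishing function on a frequency window away from the origin. I treat integer $t$, the case needed here since \cref{lem:tanh} gives $t=1$. For non-integer $t$ the identity in the first step has to be replaced, as discussed at the end.

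\emph{Step 1 (the activation on a frequency window).} Since $\sigma=\langle x\rangle^t\psi$ with $t\in\mathbb{N}$, and multiplication by $x$ corresponds to $i\partial_a$ on the Fourier side, we get, in the sense of tempered distributions,
\[
\mathcal{F}_1[\sigma]=(1-\partial_a^2)^t\,\mathcal{F}_1[\psi].
\]
By \cref{ass:activation2}, $\mathcal{F}_1[\psi]$ is smooth on $[w_1,w_2]$, so on $(w_1,w_2)$ the distribution $\mathcal{F}_1[\sigma]$ is a smooth function $\hat\sigma(a)$. I claim $\hat\sigma$ is nonzero somewhere on $(w_1,w_2)$. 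Otherwise $\mathcal{F}_1[\psi]$ would solve $(1-\partial_a^2)^t f=0$ there, i.e.\ be a combination of $a^je^{\pm a}$. This has to be excluded (or the interval shrunk); I take it that \cref{ass:activation2} is meant to supply exactly this, and for $\tanh$ it can be checked directly from the formula in \cref{lem:tanh}. I then fix a subinterval $J=[a_1,a_2]\subset(w_1,w_2)$ with $0\notin J$ on which $|\hat\sigma|\ge c_0>0$.

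\emph{Step 2 (reproducing one plane wave).} Let $\rho\in C_c^\infty(J)$ with $\int\rho=1$. For each $a\in J$ define the Schwartz function $k_a$ by $\mathcal{F}_1[k_a](\xi)=\overline{\rho_a(\xi)}/\overline{\hat\sigma(\xi)}$, where $\rho_a$ is a bump concentrated near $a$. A cleaner route is to integrate over $a$ directly. For $c\in\R$, pairing $\sigma(\cdot+c)$ with a Schwartz kernel $k$ whose Fourier transform is supported in $J$ gives
\[
\int_\R\sigma(c+b)k(b)\,db=\frac{1}{2\pi}\int_J\hat\sigma(a)\,\overline{\mathcal{F}_1[k](a)}\,e^{iac}\,da .
\]
This is legitimate because the pairing only sees $\mathcal{F}_1[\sigma]$ on $J$. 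Choosing $\mathcal{F}_1[k]=2\pi\,\overline{\rho/\hat\sigma}$ makes the right side equal to $\int_J\rho(a)e^{iac}\,da$. I then write each frequency as $\vec{\omega}=a\tilde{\vec\omega}$, with $\tilde{\vec\omega}=\vec\omega/a$, and use $\int\rho=1$ to get
\[
e^{i\vec\omega\cdot\x}=\int_J\rho(a)\,e^{ia(\vec\omega/a)\cdot\x}\,da .
\]
Inserting this into $v(\x)=(2\pi)^{-d}\int\mathcal{F}_d[v_e](\vec\omega)e^{i\vec\omega\cdot\x}d\vec\omega$ and changing variables $\tilde{\vec\omega}=\vec\omega/a$ for each fixed $a$ (Jacobian $a^d$, harmless since $a\in J$ is bounded away from $0$) produces the representation. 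It has the form $v(\x)=\int_{\R^d}\int_\R\sigma(\tilde{\vec\omega}\cdot\x+b)\,\alpha(\tilde{\vec\omega},-b)\,db\,d\tilde{\vec\omega}$, with $\alpha$ built from $\mathcal{F}_d[v_e](a\tilde{\vec\omega})a^d$, the kernel $k$ and $\rho$. Since $v$ is real, I take real parts to make $\alpha$ real-valued.

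\emph{Step 3 (derivatives and Fubini).} Differentiating under the integral gives $D^{\vec k}\sigma(\tilde{\vec\omega}\cdot\x+b)=\tilde{\vec\omega}^{\vec k}\sigma^{(|\vec k|)}(\tilde{\vec\omega}\cdot\x+b)$. By \cref{ass:activation1} the derivatives $\sigma^{(j)}$, $j\le2$, are bounded, and $k$ is Schwartz, so $\int|k|\,db<\infty$. The remaining integrand is then bounded by $|\tilde{\vec\omega}|^{|\vec k|}\,|\mathcal{F}_d[v_e](a\tilde{\vec\omega})|$, and this is integrable precisely because $v\in B^s(\Omega)$ with $s\ge|\vec k|$. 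The extra order $s\ge|\vec k|+1$ gives the margin needed for dominated convergence and for the uniform-in-$\x$ bounds on the bounded set $\Omega$. This justifies Fubini and the exchange of $D^{\vec k}$ with the integrals. The same computation, applied to $D^{\vec k}v(\x)=(2\pi)^{-d}\int(i\vec\omega)^{\vec k}\mathcal{F}_d[v_e]e^{i\vec\omega\cdot\x}$, shows the two sides agree.

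\emph{Main obstacle.} The delicate point is Step 1: identifying $\mathcal{F}_1[\sigma]$ on $(w_1,w_2)$ as a genuine smooth function that is nonzero somewhere, when $\sigma\notin L^1$. For integer $t$ the identity $\mathcal{F}_1[\sigma]=(1-\partial_a^2)^t\mathcal{F}_1[\psi]$ handles this cleanly. For non-integer $t$ I would replace it by writing $\langle x\rangle^t$ through its Fourier multiplier away from the origin, which is a smooth symbol there.
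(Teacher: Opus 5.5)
\textbf{Verdict: genuine gap.} Your Step~1 is fine, and your concern there is legitimate. Assumption~\ref{ass:activation2} as stated does not rule out $(1-\partial_w^2)^t\mathcal{F}_1[\psi]\equiv 0$ on $(w_1,w_2)$, and the paper's choice of $\rho$ with $C^{-1}\neq 0$ silently needs this. For $\tanh$ it causes no trouble, since $\mathcal{F}_1[\tanh](w)=-i\pi/\sinh(\pi w/2)$ for $w\neq 0$.

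The gap is in Steps~2--3. The identity $\int_\R\sigma(c+b)k(b)\,db=\int_J\rho(a)e^{iac}\,da$ holds for one fixed kernel $k$. After the substitution $\vec{\omega}=a\tilde{\vec{\omega}}$, however, the $a$-integrand is $\rho(a)|a|^d\mathcal{F}_d[v_e](a\tilde{\vec{\omega}})\,e^{ia\tilde{\vec{\omega}}\cdot\x}$, which is not $\rho(a)e^{iac}$ times a function of $\tilde{\vec{\omega}}$ alone. So you must apply Step~2 with $\rho$ replaced by $h_{\tilde{\vec{\omega}}}(a)=\rho(a)|a|^d\mathcal{F}_d[v_e](a\tilde{\vec{\omega}})$. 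That gives an $\tilde{\vec{\omega}}$-dependent kernel, $\alpha(\tilde{\vec{\omega}},-b)\propto\int_J h_{\tilde{\vec{\omega}}}(a)\,\hat\sigma(a)^{-1}e^{-iab}\,da$, which is essentially the paper's $\alpha=\mathcal{F}_1^{-1}[\alpha^{\#}]$.

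Your Step~3 bound ("$k$ is Schwartz; the rest is bounded by $\|\tilde{\vec{\omega}}\|^{|\vec{k}|}|\mathcal{F}_d[v_e](a\tilde{\vec{\omega}})|$") treats $k$ as fixed. Up to constants, it bounds an integrand whose integral is $\int\!\int\rho(a)|a|^d\mathcal{F}_d[v_e](a\tilde{\vec{\omega}})\big(\int_J\rho(a')e^{ia'\tilde{\vec{\omega}}\cdot\x}\,da'\big)\,da\,d\tilde{\vec{\omega}}$, which is not $v(\x)$. For the correct $\alpha$, nothing in your proposal controls $\int_{\R^d}\int_\R\|\tilde{\vec{\omega}}\|^{|\vec{k}|}|\alpha(\tilde{\vec{\omega}},b)|\,db\,d\tilde{\vec{\omega}}$. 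For a general $L^1$ extension, $a\mapsto\mathcal{F}_d[v_e](a\tilde{\vec{\omega}})$ is merely continuous. The Barron norm bounds the size of $\alpha$, but not how far $\alpha(\tilde{\vec{\omega}},\cdot)$ spreads in $b$ as $\|\tilde{\vec{\omega}}\|$ grows. Yet this double integral is exactly what makes the integral in the theorem converge and lets you differentiate under it.

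The missing ingredient is the paper's moment bound \eqref{eq:estimate_Wgammma}, which rests on an idea absent from your plan.
\begin{itemize}
\item Work with a \emph{compactly supported} extension $v_E=\chi_A v_A$ with $\operatorname{supp}v_E\subset\{|\x|\le R\}$. It has comparable Barron norm and a smooth Fourier transform.
\item Then $\alpha(\vec{\omega},b)=\frac{C}{2\pi}\int v_E(\x)\,\mathcal{F}_1[\rho(w)|w|^d](\vec{\omega}\cdot\x-b)\,d\x$ decays like $\|v_E\|_{L^1}(1+|b|)^{-h}$ once $|b|>c_\rho R\|\vec{\omega}\|$, and this tail is integrable in $\vec{\omega}$.
\item On the window $|b|\le c_\rho R\|\vec{\omega}\|$, use the sup bound $|\alpha(\vec{\omega},b)|\lesssim\int|\mathcal{F}_d[v_E](\vec{\omega}w)\rho(w)||w|^d\,dw$. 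Integrating over the window costs one power of $\|\vec{\omega}\|$. After the dilation you need $\int\|\vec{\omega}\|^{|\vec{k}|+1}|\mathcal{F}_d[v_E](\vec{\omega})|\,d\vec{\omega}<\infty$.
\end{itemize}
That extra power is the real reason for $s\ge|\vec{k}|+1$, not a ``margin for dominated convergence.''

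With this lemma added, $k_{\tilde{\vec{\omega}}}$ is Schwartz because $\mathcal{F}_d[v_E]$ is smooth, and your $\hat\sigma$-division route goes through. It even avoids the $2t$ moments of $\alpha$ that the paper's final Fubini step uses, since you pair $\alpha$ with the bounded $\sigma$ directly.

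One small fix: your $k$ is complex-valued, so the pairing is $\int_\R\sigma(c+b)k(b)\,db=\frac{1}{2\pi}\int\hat\sigma(a)e^{iac}\mathcal{F}_1[k](-a)\,da$. Hence $\mathcal{F}_1[k]$ should be supported in $-J$ rather than carry a complex conjugate.
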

\begin{proof}
Since the activation function $\sigma$ satisfies \cref{ass:activation2}, we can construct a smooth and compactly supported auxiliary function $\rho(w)$ such that
\[
C^{-1} := (2\pi)^{d-1} \int_{\R} \langle i\partial_{w}\rangle^t \left[\rho(w)\right] \, \F_1[\psi](w) \, dw
\]
is finite and nonzero, where $\langle i\partial_{w}\rangle^t := (1 - \partial_w^2)^t$ denotes a linear differential operator. For example, let $\epsilon > 0$ be arbitrarily small. There exists a smooth function $\rho^a$ that equals the complex conjugate of $\F_1[\psi](w)$ on the closed interval $[w_1+\epsilon, w_2-\epsilon]$ and vanishes outside the interval $[w_1, w_2]$ (as defined in \cref{ass:activation2}) . If $t = 0$, we set $\rho(w) := \rho^a(w)$; otherwise, let $\rho^b$ be a smooth solution of $\langle i\partial_{w}\rangle^t \rho^b = \rho^a$. Finally, define $\rho(w) := \chi(w) \rho^b(w)$, where $\chi$ is a bump function supported in $[w_1, w_2]$ and identically one on $[w_1+\epsilon, w_2-\epsilon]$. By construction, $\rho\in C^{\infty}_c(\R)$ satisfies the requirements.

For \(\generalfunc \in B^s(\Omega)\), choose $\generalfunc_A\in L^1(\R^d)$ such that $\norm{\generalfunc_A}_{B^s(\Omega)} \leq 2\norm{\generalfunc}_{B^s(\Omega)}$ and $\chi_A\in C_c^\infty(\R^d),\ \chi_A\equiv1$ on $\overline\Omega$. Then we define
\[
\generalfunc_E: = \chi_A\generalfunc_A,\quad \coeff^{\#}(\vec{\omega} ,w) := C \F_d [\generalfunc_E] \left(\vec{\omega} w\right) \rho(w) |w|^d,\]
which yields $\generalfunc_E|_{\Omega} = \generalfunc$ and $\ \norm{\generalfunc_E}_{B^s(\Omega)} \lesssim \norm{\generalfunc}_{B^s(\Omega)}$. Subsequently, we obtain that
\[
\begin{aligned}
    \F_d [\generalfunc_E](\vec{\omega} ) &= C (2\pi)^{d-1} \int_{\R}\F_d [\generalfunc_E](\vec{\omega} ) \ \langle i\partial_{w}\rangle^t \left[ \rho(w) \right] \ \F_1 [\psi](w) dw \\
    & = (2\pi)^{d-1}\int_{\R} \langle i\partial_{w}\rangle^t \left[\coeff^{\#}(\vec{\omega} /w, w) |w|^{-d}\right] \F_1 [\psi](w) dw.
\end{aligned}
\]
Since $\F_d[\generalfunc_E] \in L^1(\R^d)$ by the definition of the Barron norm \eqref{eq:barron_space}, its inverse Fourier transform is well-defined. Therefore, $\generalfunc$ admits the representation
\begin{align*}
\generalfunc(\x) =&(2\pi)^{-1} \int_{\R^d} \int_{\R} \langle i\partial_{w}\rangle^t \left[\coeff^{\#}(\vec{\omega} /w, w) |w|^{-d}\right] \F_1 [\psi](w) dw e^{i \vec{\omega} \cdot \x} d \vec{\omega}, \  \forall \x\in \Omega.
\end{align*}
By Fubini’s theorem and a change of variables, we can rewrite $\generalfunc(\x)$ as follows:
\begin{align*}
\generalfunc(\x)
=& (2\pi)^{-1} \int_{\R } \langle i\partial_{w}\rangle^t \left[\int_{\R^d} \coeff^{\#}(\vec{\omega} /w , w ) |w |^{-d} e^{i \vec{\omega} \cdot \x} d \vec{\omega} \right] \F_1 [\psi](w ) dw \\
=& (2\pi)^{-1} \int_{\R } \langle i\partial_{w}\rangle^t \left[\int_{\R^d}\coeff^{\#}(\vec{\omega} , w ) e^{iw  \vec{\omega} \cdot \x} d \vec{\omega} \right] \F_1 [\psi](w ) dw .
\end{align*}
Defining
\begin{equation}
\begin{aligned}\label{eq:gamma}
\coeff(\vec{\omega} , b) = \F_1 ^{-1}\left[\coeff^{\#}\right](\vec{\omega} , b), \quad
\coeff^{\#}(\vec{\omega} , w ) = \F_1 [\coeff](\vec{\omega} , w ) = \int _{\R} \coeff(\vec{\omega} , b) e^{ - ib w } d b,
\end{aligned}
\end{equation}
we obtain
\begin{equation*}
\begin{aligned}
\generalfunc(\x)
=& (2\pi)^{-1} \int_{\R } \langle i\partial_{w}\rangle^t \left[\int_{\R^d}\int _{\R} \coeff(\vec{\omega} , b) e^{ - ib w } d b e^{iw  \vec{\omega} \cdot \x} d \vec{\omega} \right] \F_1 [\psi](w ) dw \\
=& (2\pi)^{-1} \int_{\R } \int_{\R^d}\int _{\R} \langle i\partial_{w}\rangle^t \left[ \coeff(\vec{\omega} , b) e^{iw  (\vec{\omega} \cdot \x - b)}\right] d b d \vec{\omega} \F_1 [\psi](w ) dw \\
=& (2\pi)^{-1}
 \int_{\R^d}\int_{\R} \int_{\R} \coeff (\vec{\omega} , b) \langle \vec{\omega} \cdot \x - b\rangle^t e^{iw  (\vec{\omega} \cdot \x- b)} \F_1 [\psi](w ) dw  db d \vec{\omega} \\
=&
 \int_{\R^d} \int_{\R} \coeff (\vec{\omega} , -b)\langle \vec{\omega} \cdot \x +b\rangle^t \psi (\vec{\omega} \cdot \x+b) db d\vec{\omega}.
\end{aligned}
\end{equation*}
It remains to verify the absolute integrability of $\coeff^{\#}$ and $\coeff$, so that \eqref{eq:gamma} is well-defined. For a fixed $\vec{\omega}$, since $\rho(w)\in C_c^\infty(\R)$, we have:
\begin{align*}
\int_{ \R }\left|\coeff^{\#}(\vec{\omega} , w )\right| d w & = \int_{ \R }  \left|C\F_d [\generalfunc_E] \left(\vec{\omega} w \right) \rho(w ) |w |^d\right| d w
 \leq |C| \norm{\generalfunc_E}_{L_{1}}\int_{ \R } \left|\rho(w ) |w |^d\right| d w ,
\end{align*}
which implies that $\coeff^{\#} \in L^1(\R)$ with respect to $w$ and $\coeff$ is well-defined. To establish the absolute integrability of $\alpha$, it suffices to show that
\begin{equation}\label{eq:estimate_Wgammma}
\begin{aligned}
&\int_{\R^d} \int_{\R} |b|^{\gamma_1}\|\vec{\omega}\|^{\gamma_2}|\coeff(\vec{\omega} , b)| d b d \vec{\omega} \lesssim  \norm{\generalfunc}_{B^{s} (\Omega)},
\end{aligned}
\end{equation}
where $\gamma_1, \gamma_2\in \mathbb{N}$ and $1+\gamma_1+\gamma_2\leq s$. By the definition of $\alpha$, we have
\begin{equation}\label{eq:alpha}
\begin{aligned}
\coeff(\vec{\omega}, b)
=& \frac{C}{2\pi}\int_{\R} \int_{\R^d} \generalfunc_E(\x) e^{-i w \vec{\omega} \cdot \x} d \x\rho(w ) |w |^d e^{iw  b} d w \\
= &\frac{C}{2\pi} \int_{\R^d} \generalfunc_E(\x)  \F_1[\rho(w ) |w |^d](\vec{\omega} \cdot \x -b) d \x.
\end{aligned}
\end{equation}
We can assume that $\x \in\operatorname{supp}\generalfunc_E$ yields $|\x |\le R$. By \cref{eq:alpha} and the rapid decay of $\F_1[\rho(w ) |w |^d]$, for $h=d+\gamma_1+\gamma_2+2$, there exist $C_h$ and $c_{\rho}$ such that $|\alpha(\vec\omega,b)|\le C_h\,\|\generalfunc_E\|_{L^1}\,(1+|b|)^{-h}
$ for $|b|>c_{\rho}R\|\vec\omega\|:=T_b$.
Then we have
\begin{equation*}
\int_{|b|> T_b}|b|^{\gamma_1}|\alpha(\vec\omega,b)|db \lesssim \|\generalfunc_E\|_{L^1}\,(1+ T_b)^{\gamma_1+1-h},
\end{equation*}
which yields the boundedness of the far-region contribution since
\begin{equation*}
\begin{aligned}
&\int_{\R^d} \int_{|b|> T_b} |b|^{\gamma_1}\|\vec{\omega}\|^{\gamma_2}|\coeff(\vec{\omega} , b)| d b d \vec{\omega}\lesssim \|\generalfunc_E\|_{L^1} \int_{\R^d} (1+ T_b)^{\gamma_1+1-h}\|\vec\omega\|^{\gamma_2} d \vec{\omega}.
\end{aligned}
\end{equation*}
On the other hand, we have the following estimation:
\begin{equation*}
\begin{aligned}
&\int_{\R^d} \int_{|b|\le T_b} |b|^{\gamma_1}\|\vec{\omega}\|^{\gamma_2}|\coeff(\vec{\omega} , b)| d b d \vec{\omega} \\
\lesssim &\int_{\R^d} \int_{\R} \int_{|b|\le T_b} |b|^{\gamma_1}\|\vec{\omega}\|^{\gamma_2} |\F_d [\generalfunc_E](\vec{\omega}
w ) \rho(w ) |w |^d| d b d w   d \vec{\omega} \\
\lesssim &\int_{\R^d} \int_{\R}  \|\vec{\omega}\|^{1+\gamma_1+\gamma_2} |\F_d [\generalfunc_E](\vec{\omega}
) \rho(w ) |w |^{-(1+\gamma_1+\gamma_2)}| d w   d \vec{\omega}
\lesssim  \norm{\generalfunc}_{B^{s} (\Omega)}.
\end{aligned}
\end{equation*}
Combining the above estimates, we conclude that the bound \eqref{eq:estimate_Wgammma} holds.

Moreover, for $|\vec{k}|+1 \le s$, we have
\[ \int_{\R^d} \int_{\R}\coeff (\vec{\omega} , -b)  D^{\vec{k}} \sigma(\vec{\omega} \cdot \x+b) db d\vec{\omega} = \int_{\R^d} \int_{\R}  \vec{\omega}^{\vec{k}} \coeff (\vec{\omega} , -b) \sigma^{(|\vec{k}|)}(\vec{\omega} \cdot \x+b) db d\vec{\omega},
\]
where $\vec{\omega}^{\vec{k}}:=\prod_{j=1}^d\omega_j^{k_j}$, and $\omega_j$ and $k_j$ denote the $j$-th components of $\vec{\omega}$ and $\vec{k}$, respectively. The proof is completed by applying estimate~\eqref{eq:estimate_Wgammma} once more together with the boundedness of $ \sigma^{(|\vec{k}|)}$.
\end{proof}

\begin{lemma}
    \label{lem:truncation_estimate}
Under the notation and assumptions of \cref{thm:integral_expression}, let $q>0$ and $Q:=[-q,q]$. For any $\generalfunc \in B^4(\Omega)$ and any $\vec{k}\in\mathbb{N}^d$ with $|\vec{k}|\le 2$, we have
\begin{equation*}
\left| D^{\vec{k}} \generalfunc(\x ) - \mathcal{T}_{\vec{k}}(\x) \right|
\lesssim \frac{\|\generalfunc\|_{B^4(\Omega)}}{q^{3 - |\vec{k}|}},
\quad \forall \x  \in \Omega,
\end{equation*}
where $\mathcal{T}_{\vec{k}}(\x) = \int_{Q^d \times Q} \coeff(\vec{\omega},-b)\,D^{\vec{k}} \sigma(\vec{\omega} \cdot \x + b)\,db\, d\vec{\omega}$.
\end{lemma}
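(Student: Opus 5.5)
By \cref{thm:integral_expression}, applied with $s=4\ge|\vec{k}|+1$, we have $D^{\vec{k}}\generalfunc(\x)-\mathcal{T}_{\vec{k}}(\x)=\int_{(Q^d\times Q)^c}\coeff(\vec{\omega},-b)\,D^{\vec{k}}\sigma(\vec{\omega}\cdot\x+b)\,db\,d\vec{\omega}$. The plan is to bound this tail integral pointwise in $\x$ by a weighted $L^1$ norm of $\coeff$ and then use the moment estimate \eqref{eq:estimate_Wgammma}.

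First, by the chain rule, $D^{\vec{k}}\sigma(\vec{\omega}\cdot\x+b)=\vec{\omega}^{\vec{k}}\sigma^{(|\vec{k}|)}(\vec{\omega}\cdot\x+b)$. \Cref{ass:activation1} gives $|\sigma^{(j)}|\le C$ for $j\le 2$, and $|\vec{\omega}^{\vec{k}}|\le\|\vec{\omega}\|^{|\vec{k}|}$, so uniformly in $\x\in\Omega$,
\[
\left|D^{\vec{k}}\generalfunc(\x)-\mathcal{T}_{\vec{k}}(\x)\right|\lesssim\int_{(Q^d\times Q)^c}\|\vec{\omega}\|^{|\vec{k}|}\,|\coeff(\vec{\omega},-b)|\,db\,d\vec{\omega}.
\]
The complement of $Q^d\times Q$ is contained in the union of the regions $\{|b|>q\}$ and $\{\|\vec{\omega}\|_\infty>q\}$. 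On the latter region $\|\vec{\omega}\|\ge\|\vec{\omega}\|_\infty>q$, so $1\le(\|\vec{\omega}\|/q)^{3-|\vec{k}|}$. On the former, $1\le(|b|/q)^{3-|\vec{k}|}$. Inserting these factors yields
\[
\lesssim q^{-(3-|\vec{k}|)}\int_{\R^d}\int_{\R}\Big(\|\vec{\omega}\|^{3}+|b|^{3-|\vec{k}|}\|\vec{\omega}\|^{|\vec{k}|}\Big)|\coeff(\vec{\omega},b)|\,db\,d\vec{\omega},
\]
after the reflection $b\mapsto -b$.

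Both terms are moments of the form $|b|^{\gamma_1}\|\vec{\omega}\|^{\gamma_2}$ with $\gamma_1+\gamma_2=3$, so $1+\gamma_1+\gamma_2=4\le s$. Estimate \eqref{eq:estimate_Wgammma} then bounds each of them by $\|\generalfunc\|_{B^4(\Omega)}$, which gives the claim with rate $q^{-(3-|\vec{k}|)}$.

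The main obstacle is the verification that \eqref{eq:estimate_Wgammma} really holds for the full range $\gamma_1+\gamma_2\le 3$ with constants independent of $q$. In particular, the far-region integral $\int(1+T_b)^{\gamma_1+1-h}\|\vec{\omega}\|^{\gamma_2}\,d\vec{\omega}$ must converge, and this requires choosing $h$ large, as the proof of \cref{thm:integral_expression} does. The near-region bound uses that $\rho$ is supported away from $w=0$, so $|w|^{-(1+\gamma_1+\gamma_2)}$ is harmless. The remaining steps are mechanical.
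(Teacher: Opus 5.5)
Your proposal is correct and follows essentially the same route as the paper. Both arguments write the error as the tail integral over the complement of $Q^d\times Q$, use the boundedness of $\sigma^{(|\vec{k}|)}$, insert the factors $(|b|/q)^{3-|\vec{k}|}$ or $(\|\vec{\omega}\|/q)^{3-|\vec{k}|}$ on the respective tail regions, and conclude with the moment bound \eqref{eq:estimate_Wgammma} at $\gamma_1+\gamma_2=3$. The differences are cosmetic: the paper splits the $\vec{\omega}$-tail coordinatewise with factors $|\omega_i|^3/q^3$ and writes out only $|\vec{k}|=0$ (saying ``similarly'' for the rest), whereas you spell out the mixed moments $|b|^{3-|\vec{k}|}\|\vec{\omega}\|^{|\vec{k}|}$ needed for $|\vec{k}|=1,2$.
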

\begin{proof}
Fix $q>0$ and set $Q:=[-q,q]$. Due to the boundedness of $\sigma$, we have
\begin{align*}
&\left|\int_{\R^d\times \R} \sigma (\vec{\omega}  \cdot \x +b)\coeff (\vec{\omega}  , -b)db d\vec{\omega}  - \int_{Q^d\times Q} \sigma (\vec{\omega}  \cdot \x +b)\coeff (\vec{\omega}  , -b)db d\vec{\omega}  \right| \\
\lesssim& \int_{\R^d} \int_{\R\setminus Q}|\coeff (\vec{\omega}  ,b)|db d\vec{\omega}  + \sum_{i=1}^d \int_{\R}\int_{\R^{d-1}}\int_{\R\setminus Q} |\coeff (\vec{\omega}  ,b)| d \omega_i d \omega_1 \cdots d \omega_{i-1} d \omega_{i+1} \cdots d \omega_d db
\\
\lesssim& \int_{\R^d} \int_{\R}\left|\frac{b^3}{q^3} \coeff(\vec{\omega}  , b)\right|  d bd\vec{\omega}  + \sum_{i=1}^d \int_{\R}\int_{\R^{d}} \frac{| \omega_i|^3}{q^3}|\coeff (\vec{\omega}  ,b)| d\vec{\omega}  db
\lesssim \frac{\norm{\generalfunc}_{B^4(\Omega)}}{q^3},
\end{align*}
where the last inequality uses the estimate (\ref{eq:estimate_Wgammma}). Similarly, for $|\vec{k}|=1, 2$, we conclude
\[
\left| D^{\vec{k}} \generalfunc(\x)   - \mathcal{T}_{\vec{k}}(\x)  \right| \lesssim \frac{\norm{\generalfunc}_{B^4(\Omega)}}{q^{3-|\vec{k}|}}.
\]
\end{proof}

\begin{remark}\label{remark:high_order_barron}
The result in \cref{lem:truncation_estimate} extends to higher-order Barron regularity. Specifically, for any $\generalfunc \in B^s(\Omega)$ with $s \geq 4$, we have
\[
\left| D^{\vec{k}} \generalfunc(\x) - \mathcal{T}_{\vec{k}}(\x) \right|
\;\lesssim\;
\frac{\|\generalfunc\|_{B^s(\Omega)}}{q^{s-1 - |\vec{k}|}},
\quad  |\vec{k}| \le 2, \quad \x \in \Omega.
\]
\end{remark}

Next, we estimate the expected error between $\mathcal{T}_{\vec{k}}(\x)$ and its discrete sum.
\begin{lemma}
\label{lem:discrete_error_of_appro}
Under the notation and assumptions of \cref{lem:truncation_estimate}, let $q, R>0$ and $M\in\mathbb{N}^+$. Assume that $\Omega \subset [-R,R]^d$. Let $Q=[-q,q]$, and let \(\{(\vec{\omega}_m,b_m)\}_{m=1}^M\) be i.i.d.\ samples drawn from a distribution with strictly positive density $\pi_{\vec{\omega},b}$ supported on \(Q^d \times Q\). For any \(\generalfunc \in B^4(\Omega)\),
define
\(
\tilde a_m = \coeff(\vec{\omega}_m,-b_m)/{\pi_{\vec{\omega},b}(\vec{\omega}_m,b_m)},\ m=1,\dots,M.
\)
Then for any multi-index \(\vec{k}\in\mathbb{N}^d\) with \(|\vec{k}|\le 2\), the following estimate holds:
\begin{equation*}
\mathbb{E}_{\{\vec{\omega},b\}}\!\left[
   \sup_{\x\in\Omega}
   \left|
   \frac{1}{M}\sum_{m=1}^M D^{\vec{k}}\sigma(\vec{\omega}_m\cdot\x+b_m)\,\tilde a_m
   - \mathcal{T}_{\vec{k}}(\x)
   \right|
   \right]
\lesssim \sqrt{\frac{I_q}{M}},
\end{equation*}
and the coefficients \(\{\tilde a_m\}_{m=1}^M\) satisfy the moment bound
\(
\frac{1}{M}\sum_{m=1}^M  (1+\|\vec{\omega}_m\|)^2\,|\tilde a_m| \leq C_q.
\)
Here, \( C_q, I_q<\infty\) are constants defined by the density $\pi_{\vec{\omega},b}$.
In particular, there exists a choice of \(\pi_{\vec{\omega},b}\) for which \( C_q, I_q\) are independent of \(q\).
\end{lemma}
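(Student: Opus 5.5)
The estimate is a Monte Carlo (importance-sampling) argument, and I would organise it as follows. Define the random functions
\[
Z_m(\x) := D^{\vec{k}}\sigma(\vec{\omega}_m\cdot\x+b_m)\,\tilde a_m .
\]
Because $\tilde a_m=\coeff(\vec{\omega}_m,-b_m)/\pi_{\vec{\omega},b}(\vec{\omega}_m,b_m)$ and $\pi_{\vec{\omega},b}>0$ on $Q^d\times Q$, we have
\[
\mathbb{E}[Z_m(\x)] = \int_{Q^d\times Q}\coeff(\vec{\omega},-b)\,D^{\vec{k}}\sigma(\vec{\omega}\cdot\x+b)\,db\,d\vec{\omega} = \mathcal{T}_{\vec{k}}(\x).
\]
So the quantity to bound is the expected supremum of an empirical process indexed by $\x\in\Omega$. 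I would define the constants as
\[
I_q := \int_{Q^d\times Q}\frac{(1+\|\vec{\omega}\|)^{4}\,|\coeff(\vec{\omega},-b)|^2}{\pi_{\vec{\omega},b}(\vec{\omega},b)}\,db\,d\vec{\omega}
\]
(possibly with a factor $(1+R)^2$ added), and
\[
C_q := \sup_{Q^d\times Q}\frac{(1+\|\vec{\omega}\|)^2|\coeff(\vec{\omega},-b)|}{\pi_{\vec{\omega},b}}.
\]
Both are finite: $\coeff$ is continuous, since it is the Fourier integral of an $L^1$ function in $w$ by \eqref{eq:alpha}, and $\pi_{\vec{\omega},b}$ is strictly positive and continuous on the compact cube. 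The moment bound is then immediate, because each summand satisfies $(1+\|\vec{\omega}_m\|)^2|\tilde a_m|\le C_q$. The sampling takes place in the $(\vec{\omega},b)$ variables, so $\alpha$ must be evaluated at the sample points.

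\textbf{Symmetrization.} First I would symmetrize:
\[
\mathbb{E}\sup_{\x}\Big|\tfrac1M\textstyle\sum_m (Z_m(\x)-\mathcal{T}_{\vec{k}}(\x))\Big| \le 2\,\mathbb{E}\sup_{\x}\Big|\tfrac1M\textstyle\sum_m \epsilon_m Z_m(\x)\Big|,
\]
where the $\epsilon_m$ are Rademacher signs. Since $D^{\vec{k}}\sigma(\vec{\omega}\cdot\x+b)=\vec{\omega}^{\vec{k}}\sigma^{(|\vec{k}|)}(\vec{\omega}\cdot\x+b)$, each $Z_m(\x)$ is a coefficient times a bounded function that is Lipschitz in $\x$. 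This uses \cref{ass:activation1}: $\sigma^{(j)}$ is bounded and Lipschitz for $j\le2$. The coefficient is $c_m=\vec{\omega}_m^{\vec{k}}\tilde a_m$, with $|c_m|\le(1+\|\vec{\omega}_m\|)^2|\tilde a_m|$, and the Lipschitz constant in $\x$ gains an extra factor $\|\vec{\omega}_m\|\le\sqrt d\,q$.

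\textbf{Bounding the Rademacher average.} This is the main obstacle: the bound must be $\sqrt{I_q/M}$ without a $\log$ or $q$-dependent dimension factor spoiling the $q$-independence claim. The cleanest route I would try first is Dudley's entropy integral applied to the process $\x\mapsto M^{-1/2}\sum_m\epsilon_mZ_m(\x)$. Conditionally on the samples, this process is sub-Gaussian with respect to the metric
\[
d(\x,\x')^2=\tfrac1M\textstyle\sum_m|Z_m(\x)-Z_m(\x')|^2\le L^2\|\x-\x'\|^2\,\tfrac1M\textstyle\sum_m c_m^2(1+\|\vec{\omega}_m\|)^2 ,
\]
and its diameter is at most $2\big(\tfrac1M\sum c_m^2\big)^{1/2}$. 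Covering $\Omega\subset[-R,R]^d$ by balls gives an entropy integral bounded by $\big(\tfrac1M\sum c_m^2(1+\|\vec{\omega}_m\|)^2\big)^{1/2}$ times a constant depending on $d$ and $R$; the entropy of the cube contributes only $\sqrt{\log}$ terms, which integrate to a finite constant. I would then take expectations and apply Jensen, using
\[
\mathbb{E}\big[c_m^2(1+\|\vec{\omega}_m\|)^2\big]\le \mathbb{E}\big[(1+\|\vec{\omega}_m\|)^6\tilde a_m^2\big],
\]
and absorb the powers into the definition of $I_q$ (so $I_q$ carries weight $(1+\|\vec{\omega}\|)^6$). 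This yields the rate $\sqrt{I_q/M}$. An alternative that avoids Dudley is to bound the pointwise variance by $I_q/M$ and handle the supremum via a Lipschitz net of mesh $M^{-1/2}$. I would avoid that route because it produces a $\log M$ factor.

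\textbf{The $q$-independent choice.} Finally, for the ``in particular'' statement I would choose $\pi_{\vec{\omega},b}$ proportional to
\[
(1+\|\vec{\omega}\|)^{3}|\coeff(\vec{\omega},-b)| + \eta(\vec{\omega},b)
\]
restricted to $Q^d\times Q$, where $\eta$ is a fixed positive function that keeps the density strictly positive. With this choice,
\[
I_q\le\Big(\int (1+\|\vec{\omega}\|)^3|\coeff|\Big)^2,
\]
which by \eqref{eq:estimate_Wgammma} (needing $1+3\le s=4$) is bounded by $\|\generalfunc\|_{B^4}^2$ uniformly in $q$. The same argument gives $C_q\lesssim\int(1+\|\vec{\omega}\|)^3|\coeff|$. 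To keep the normalization uniform in $q$, I would take $\eta$ integrable over $\R^{d+1}$ and small.
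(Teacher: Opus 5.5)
Your proposal is correct, but it controls the supremum over $\x$ by a different mechanism than the paper.

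\textbf{How the paper does it.} The paper never chains. After symmetrization it recenters the activation at the bias, $\varphi_m(t)=\sigma^{(|\vec k|)}(t+b_m)-\sigma^{(|\vec k|)}(b_m)$, so that $\varphi_m(0)=0$. It splits off the constant term $\frac1M\sum_m\tau_m c_m\sigma^{(|\vec k|)}(b_m)$, which is bounded by $B\,\mathbb{E}[c_1^2]^{1/2}/\sqrt M$. It then applies the Ledoux--Talagrand contraction with coefficients (\cref{lem:ying2024accurate}) to replace $\varphi_m(\vec\omega_m\cdot\x)$ by the linear functional $\vec\omega_m\cdot\x$. The supremum of that linear process over $\Omega\subset[-R,R]^d$ is at most $R\sqrt d\,\|\sum_m\tau_m c_m\vec\omega_m\|$. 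Orthogonality of the signs and Jensen then give $2LR\sqrt d\,\mathbb{E}[c_1^2\|\vec\omega_1\|^2]^{1/2}/\sqrt M$, which is the origin of the weight $\vec\omega^{2\vec k}(1+\|\vec\omega\|^2)$ in $I_{\vec k}$.

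\textbf{Comparison.} The paper's route exploits the ridge structure $\sigma(\vec\omega\cdot\x+b)$. It is shorter, needs no covering estimates, and gives the explicit constant $4LR\sqrt d+2B$. Your Dudley argument uses only that each feature is Lipschitz in $\x$ with an $\vec\omega$-dependent constant, so it would also cover non-ridge random features. The price is the chaining bookkeeping and a dimensional constant of order $dR$ instead of $\sqrt d R$.

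\textbf{Points to fix when writing out the Dudley step.}
\begin{itemize}
\item The entropy integral is log-free only after a case split. Split on whether the diameter $D$ exceeds $a=2R\sqrt d\,K$; either way you get $\lesssim\sqrt d\,(a+D)$.
\item The diameter $D$ should carry the factor $B=\sup|\sigma^{(|\vec k|)}|$.
\item Dudley controls increments of the process, so you must add the base-point term $\mathbb{E}|X_{\x_0}|\le B\bigl(\frac1M\sum_m c_m^2\bigr)^{1/2}$.
\item The continuity of $\pi_{\vec\omega,b}$ (or at least $\inf\pi_{\vec\omega,b}>0$) that you invoke for finiteness of $C_q$ is not in the statement. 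The paper uses this tacitly as well.
\end{itemize}

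\textbf{Your $q$-independent density.} Adding the perturbation $\eta$ is a genuine improvement. The paper's choice $\pi_{\vec\omega,b}\propto|\coeff|(1+\|\vec\omega\|)^3$ fails to be strictly positive wherever $\coeff$ vanishes. Your choice restores the hypothesis while keeping $C_q\le Z_q$ and $I_q\le Z_q^2$. Here $Z_q\le\int_{\R^{d+1}}\bigl[(1+\|\vec\omega\|)^3|\coeff|+\eta\bigr]$, which is bounded independently of $q$ by \eqref{eq:estimate_Wgammma} together with the integrability of $\eta$.
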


\begin{proof}
For any multi-index $\vec{k} \in \mathbb{N}^d$ with $|\vec{k}|\le 2$, define $U_{m,\x} = D^{\vec{k}}\sigma (\vec{\omega}_m \cdot \x + b_m ) \tilde{a}_m$.
Then we have
\begin{equation*}
\begin{aligned}
\mathbb{E}&\left[ \sup_{\x\in \Omega} \left| \sum_{m=1}^M \frac{D^{\vec{k}}\sigma (\vec{\omega}_m \cdot \x + b_m )\tilde{a}_m}{M}
- \mathcal{T}_{\vec{k}}(\x)\right| \right]  = \mathbb{E}\left[ \sup_{\x\in \Omega} \left| \sum_{m=1}^M\frac{\left( U_{m,\x}- \mathbb{E}\left[U_{m,\x}\right] \right)}{M} \right| \right].
\end{aligned}
\end{equation*}
Let $\vec{\tau} = \{\tau_m\}_{m=1}^M$, where the $\tau_m$ are i.i.d. Rademacher random variables satisfying $\mathbb{P}(\tau_m=1)=\mathbb{P}(\tau_m=-1)=\frac{1}{2}$, independent of $\{(\vec{\omega}_m,b_m)\}_{m=1}^M$. By \cref{lem:symmetrization},
\begin{equation*}
 \mathbb{E}_{\{\vec{\omega},b\}}\left[ \sup_{\x\in \Omega} \left| \frac{1}{M}\sum_{m=1}^M\left( U_{m,\x}- \mathbb{E}\left[U_{m,\x}\right] \right) \right| \right]\leq 2 \mathbb{E}_{\vec{\tau},\{\vec{\omega},b\}}\left[ \sup_{\x\in \Omega} \left| \frac{1}{M}\sum_{m=1}^M \tau_m U_{m,\x} \right| \right].
\end{equation*}

We first consider the case $|\vec{k}|= 0$. Define $\varphi_m(t) = \sigma(t+b_m )-\sigma( b_m )$. Then
\begin{align*}
& \mathbb{E}_{\vec{\tau},\{\vec{\omega},b\}}\left[ \sup_{\x\in \Omega} \left| \frac{1}{M}\sum_{m=1}^M \tau_m U_{m,\x} \right| \right]
\leq \mathbb{E}_{\vec{\tau} , \{\vec{\omega} , b \}}  \left[  \left| \frac{1}{M}\sum_{m=1}^M \tau_m  \tilde{a}_m   \sigma(b_m ) \right|  \right] \\&\quad + \mathbb{E}_{\vec{\tau},\{\vec{\omega},b\}}  \left[ \sup_{\x\in \Omega} \left| \frac{1}{M}\sum_{m=1}^M \tau_m \tilde{a}_m \varphi_m (\vec{\omega}_m \cdot \x ) \right| \right]=: \text{\textcircled{1}} + \text{\textcircled{2}}.
\end{align*}
Let $L, B$ denote the uniform Lipschitz constant and the uniform upper bound for $\sigma$ (cf.  \cref{ass:activation1}). By \cref{lem:ying2024accurate} and Jensen’s inequality, we have
\[
    \text{\textcircled{2}} \leq 2L \mathbb{E}_{\vec{\tau},\{\vec{\omega},b\}} \left[ \sup_{\x\in \Omega} \left|  \sum_{m=1}^M \frac{\tau_m \tilde{a}_m \vec{\omega}_m \cdot \x}{M} \right| \right] \leq \frac{2LR\sqrt{d}}{M}\mathbb{E}_{\vec{\tau},\{\vec{\omega},b\}} \left[ \left\|\sum_{m=1}^M \tau_m \tilde{a}_m \vec{\omega}_m \right\|^2 \right]^{\frac{1}{2}}.
\]
Using the fact that $\mathbb{E}[\tau_i\tau_j]=\delta_{ij}$ and that $\{(\vec{\omega}_m, b_m)\}_{m=1}^M$ are i.i.d., we obtain
\begin{equation*}
\begin{aligned}
\text{\textcircled{2}}
 \leq \frac{2LR\sqrt{d}}{M}\mathbb{E}_{\{\vec{\omega},b\}}\left[ \left(\sum_{m=1}^M \left \|  \tilde{a}_m \vec{\omega}_m \right\|^2 \right)^{\frac{1}{2}}  \right]
 \leq \frac{2LR\sqrt{d}}{\sqrt{M}}\mathbb{E}_{\{\vec{\omega},b\}}\left[  \left\| \tilde{a}_1 \vec{\omega}_1 \right\|^2  \right]^{\frac{1}{2}}.
\end{aligned}
\end{equation*}
Similarly, using the boundedness of $\sigma$, we derive
\(\text{\textcircled{1}} \leq \frac{B}{\sqrt{M}}\mathbb{E}_{\{\vec{\omega},b\}}\left[  |\tilde{a}_1|^2    \right]^{\frac{1}{2}}\).
Such estimations also holds for $|\vec{k}|=1,2$ by defining $\varphi_m(t) = \sigma^{(|\vec{k}|)}(t+b_m )-\sigma^{(|\vec{k}|)}( b_m )$, where $ D^{\vec{k}}\sigma (\vec{\omega}_m \cdot \x + b_m )=\vec{\omega}_m^{\vec{k}}\left(\varphi_m(\vec{\omega}_m\cdot\x)+\sigma^{(|\vec{k}|)}( b_m )\right)$. Thus we conclude that
\begin{equation*}
    \mathbb{E}_{\{\vec{\omega},b\}}\left[ \sup_{\x\in \Omega} \left| \frac{1}{M}\sum_{m=1}^M\left( U_{m,\x}- \mathbb{E}\left[U_{m,\x}\right] \right) \right| \right] \leq  \frac{(4LR\sqrt{d}+2B)\sqrt{I_q}}{\sqrt{M}},
\end{equation*}
where $I_q=\max_{0\leq|\vec{k}|\leq 2}I_{\vec{k}}$ and
\[
\begin{aligned}
I_{\vec{k}}:=&\int _{Q^d\times Q}\frac{ \vec{\omega}^{2\vec{k}} \coeff^2 (\vec{\omega},-b)}{\pi_{\vec{\omega}, b}} ( \|\vec{\omega}\|^2 +1) db d\vec{\omega}\\
\leq &\sup_{Q^d\times Q}\frac{|\vec{\omega}^{\vec{k}}\coeff (\vec{\omega}, -b)|\cdot (\|\vec{\omega}\|+1)}{\pi_{\vec{\omega}, b}}
 \int _{\R^d\times \R} |\vec{\omega}^{\vec{k}} \coeff (\vec{\omega}, -b)| \cdot (\|\vec{\omega}\|+1) db d\vec{\omega} \\
 \lesssim & \sup_{Q^d\times Q}\frac{|\vec{\omega}^{\vec{k}} \coeff (\vec{\omega},-b)|\cdot (\|\vec{\omega}\|+1)}{\pi_{\vec{\omega}, b}}<\infty.
\end{aligned}
\]
Here, we have used the estimate \eqref{eq:estimate_Wgammma} and the fact that $|\coeff(\vec{\omega} , b)|\lesssim \norm{\generalfunc}_{B^4(\Omega)}$. In addition, we have
\[
\sum_{m=1}^M \frac{(\|\vec{\omega}_m\|+1)^2|\tilde{a}_m|}{M} \leq \sup_{Q^d\times Q}
\frac{(\|\vec{\omega}\|+1)^2|\coeff(\vec{\omega},-b)|}
     {\pi_{\vec{\omega},b}}: = C_q < \infty.
\]
In particular, by taking $\pi_{\vec{\omega}, b}= \frac{ |\coeff (\vec{\omega},-b)|\cdot(\|\vec{\omega}\|+1)^3}{ \norm{ |\coeff (\vec{\omega},-b)|\cdot (\|\vec{\omega}\|+1)^3}_{L^1(Q^d\times Q)}}$, the constants $ C_q, I_q$ become independent of $q$.
\end{proof}

With the above two lemmas, we are able to present the proof of \cref{thm:approximation_thm}.

\begin{proof}[Proof of \cref{thm:approximation_thm}]
Setting $a_m = \tilde{a}_m/M$ for $m = 1,\cdots, M$, where $\tilde{a}_m$ is defined in \cref{lem:discrete_error_of_appro}, the result follows from \cref{thm:integral_expression} and Lemmas \ref{lem:truncation_estimate} and \ref{lem:discrete_error_of_appro}.
\end{proof}

\subsection{Generalization error}\label{sec:genera}
We derive the generalization error in this section and begin by defining the Rademacher complexity. For the reader's convenience, the supporting theorems used in our derivation are provided in Appendix~\ref{app:Auxiliary Definitions and Lemmas}.

\begin{definition}[Rademacher complexity]
Let $\mu$ be a probability distribution on a set $\mathcal{X}$ and suppose that $x_1, \cdots, x_n$ are independent samples selected according to $\mu$. Let $V$ be a class of functions mapping from $\mathcal{X}$ to $\R$. The empirical Rademacher complexity of $V$ is defined as
\[
\widehat{R}_n(V) = \bE_{\vec{\tau}} \left[ \sup_{v \in V} \left| \frac{1}{n} \sum_{i=1}^n \tau_i v(x_i) \right| \,\bigg|\, x_1, \cdots, x_n \right],
\]
where $\vec{\tau} = \{\tau_i\}_{i=1}^n$, and $\tau_i$ are i.i.d. Rademacher random variable satisfying $\mathbb{P}(\tau_i=1)=\mathbb{P}(\tau_i=-1)=\frac{1}{2}$. The population Rademacher complexity of $V$ is then defined as the expectation of the empirical Rademacher
complexity over i.i.d. draws $x_1, \cdots, x_n$,
\begin{equation*}
 R_n (V) = \bE_{\{x\}, \vec{\tau}} \left[\sup_{v\in V} \left| \frac{1}{n}\sum_{j=1}^n \tau_j v(x_j) \right| \; \right].
\end{equation*}
\end{definition}

\begin{definition}
A function $\phi: [0,\infty) \rightarrow [0,\infty)$ is sub-root if it is nonnegative, nondecreasing and $r\rightarrow \phi(r)/\sqrt{r}$ is nonincreasing for $r > 0$.
\end{definition}

Next, we demonstrate that the generalization error can be bounded, provided that an appropriate sub-root function exists.

\begin{lemma}\label{lem:gen}
Consider the hypothesis space \( F(\Omega)\).
For any $r>0$, define the localized function classes
\(
\mM_{r}(\Omega)
:=\left\{u \in F(\Omega) : \ L(u)\le r \right\}
\)
and the associated vector-valued space
$\mS_{r}(\Omega)
:= \left\{ \left(\ell_{1}, \cdots, \ell_{5} \right)  : \  u \in \mM_{r}(\Omega) \right\}.$  With respect to the random sampling of the datasets~\eqref{eq:training_datasets}, assume that there exists a sub-root function $\phi(r)$ such that, for all $r \ge  r^*$,
\begin{equation}\label{ass:phi}
R_{\mathbf{n}}\left(\mS_{r}(\Omega)\right) :=
\sum_{i=1}^5 R_{n_i}\left(\left\{\ell_{i} : (\ell_{1},\cdots,\ell_{5}) \in \mS_{r}(\Omega) \right\}\right)
\ \leq\ \phi(r),
\end{equation}
where $ r^*$ is the unique solution of the fixed-point equation $r= \phi(r)$. Then, for any $t>0$, with probability at least $(1-e^{-t})^5$, the following bound holds:
\[
L(\hat \generalNN) \ \lesssim \ \inf_{ \generalNN \in F(\Omega)}   \| \generalNN - \generalfunc^* \|^2_{X^{\infty}( \Omega)} +  \max\left\{ r^* ,C_B^2\frac{t}{n }\right\},
\]
where $\hat \generalNN$ and $\solufunc$ are defined as in \cref{thm:main}.
\end{lemma}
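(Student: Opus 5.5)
The plan is to run the localization argument of Bartlett--Bousquet--Mendelson \cite{bartlett2005local} separately on each of the five loss components and then combine. Two facts about $F(\Omega)$ will be used throughout.

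First, every $\ell_i$ is uniformly bounded. The constraint $\sum_m(1+\|\vec\omega_m\|)^2|a_m|\le C_B$ together with \cref{ass:activation1} gives $\|u\|_{C^2}\lesssim C_B$ on each subdomain. With bounded data this yields $0\le \ell_i\lesssim C_B^2$ (constants depend on $g$, $\beta$, $\lambda_i$).

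Second, there is a Bernstein-type variance condition. Since $\ell_i\ge0$ is bounded by $C_B^2$, we have $\bE[\ell_i^2]\lesssim C_B^2\,\bE[\ell_i]\le C_B^2 L(u)/\lambda_i^2$. Hence on $\mM_r$ the variance of $\ell_i$ is $\lesssim C_B^2 r$.

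Note that there is no need to subtract $\ell_i(u^*)$ here. The exact solution makes every residual vanish, so $L(v^*)=0$ and the excess risk equals the loss itself. This nonnegativity is what gives the fast rate.

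The key steps are as follows.

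\textbf{Step 1 (localized deviation).} Apply Talagrand's concentration inequality in Bousquet's form (appendix) to the class $\{\ell_i:\ u\in\mM_r\}$ for each $i$. With probability $1-e^{-t}$, uniformly over $\mM_r$,
\[
\bE\ell_i-\tfrac1{n_i}\textstyle\sum_j\ell_i(\x_{i,j})\lesssim R_{n_i}(\cdot)+\sqrt{C_B^2 r t/n}+C_B^2 t/n .
\]
Independence of the five sample sets lets us take the intersection of the five events, which has probability $(1-e^{-t})^5$. Summing with the weights $\lambda_i^2$ and using \eqref{ass:phi} gives
\[
L(u)-L^{\mathbf n}(u)\lesssim \phi(r)+\sqrt{C_B^2 rt/n}+C_B^2t/n .
\]

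\textbf{Step 2 (peeling).} Apply the standard peeling/rescaling trick of Bartlett et al. Consider the weighted class $\{\tfrac{r}{\max(r,L(u))}\ell_i\}$, which is star-shaped in the appropriate sense because $\ell_i\ge0$. Combine this with the sub-root property: $\phi(r)\le\sqrt{r/r^*}\,\phi(r^*)=\sqrt{rr^*}$ for $r\ge r^*$. This upgrades Step 1 to the following. For $r_0\gtrsim\max\{r^*,C_B^2t/n\}$, every $u\in F(\Omega)$ satisfies
\[
L(u)\le 2L^{\mathbf n}(u)+c\,r_0 .
\]
This uniform ratio-type inequality is the main obstacle. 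The points to check are that the rescaled classes still satisfy the Rademacher bound \eqref{ass:phi}, which follows from the comparison $R_n(\text{rescaled})\le\sum_k 2^{-k}R_n(\mS_{2^{k}r})$ together with the sub-root property, and that the constant factor stays uniform across the five heterogeneous components.

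\textbf{Step 3 (comparison with a near-best approximant).} Let $\hat u$ be the empirical minimizer, and let $u_0\in F(\Omega)$ nearly attain $\inf\|u-v^*\|_{X^\infty}$. Step 2 and optimality give
\[
L(\hat u)\le 2L^{\mathbf n}(\hat u)+cr_0\le 2L^{\mathbf n}(u_0)+cr_0 .
\]
Because $v^*$ satisfies \eqref{eq:interface_problem} pointwise, each residual of $u_0$ equals the corresponding differential or trace operator applied to $u_0-v^*$. For example, $\beta_i\Delta u_{0,i}+g_i=\beta_i\Delta(u_0-v^*)_i$, and the flux jump and trace terms are handled likewise. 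Hence pointwise $\ell_i(\x)\lesssim\|u_0-v^*\|^2_{X^\infty}$, and therefore $L^{\mathbf n}(u_0)\lesssim\|u_0-v^*\|^2_{X^\infty}$ deterministically, with no probability cost. Taking the infimum over $u_0$ gives the claim.

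A subtlety to handle is the case $L(\hat u)<r_0$, which is trivial. Otherwise $\hat u$ lies in a shell $\mM_{2^{k+1}r_0}\setminus\mM_{2^kr_0}$, where the peeling bound applies.
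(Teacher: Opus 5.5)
Your proposal is correct and follows essentially the same route as the paper: per-component Talagrand bounds intersected over the five independent samples, a localization/peeling step giving the uniform inequality $L(u)\le 2L^{\mathbf{n}}(u)+r_0$ with $r_0\sim\max\{r^*,C_B^2t/n\}$, and then optimality of $\hat u$ together with the deterministic bound $L^{\mathbf{n}}(u)\lesssim\|u-v^*\|^2_{X^{\infty}(\Omega)}$. The only difference is cosmetic: the paper normalizes by $(\mathbb{E}[\ell_i]-\ell_i)/(L(u)+r)$ and peels over shells $\mathcal{M}_{4^{k+1}r}\setminus\mathcal{M}_{4^k r}$, whereas you use the Bartlett--Bousquet--Mendelson weighting $r/\max(r,L(u))$ with dyadic shells.
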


\begin{proof}
For any $r\geq  r^* $, we introduce the normalized function classes
\begin{equation}\label{eq:ell}
\tilde{\ell}_{i}(\x) = \frac{\mathbb{E} [\ell_i]-\ell_i(\x)}{L(u)+ r},\quad \tilde{\mS}_{i}^r := \left\{ \ \tilde{\ell}_{i}:\ u \in F(\Omega) \right\},  \quad i=1, \cdots, 5.
\end{equation}
We first verify that these classes satisfy the conditions required for applying the improved version of Talagrand’s concentration inequality (\cref{lem:talagrand_ineq}).
Since all functions involved are continuous and the subdomains are compact, there exists a constant \( 0< \hat{C} \lesssim C_B \) such that
\begin{equation}
\label{eq:boundedness_chat}
\max\left\{\begin{array}{l}
 \sup_{\substack{ |\vec{k}|=0,1,2 \\ i=1,2}} \|D^{\vec{k}} \generalfunc^* \|_{C(\overline{\Omega}_i)},
\sup_{i=1,2},\
\|g_i\|_{C(\overline{\Omega}_i)},\
\|g_d\|_{C(\Gamma)},\\
\|g_b\|_{C(\partial\Omega)},\
\|g_n\|_{C(\Gamma)},\ \sup_{\substack{ |\vec{k}|=0,1,2 \\ i=1,2}} \|D^{\vec{k}} u\|_{C(\overline{\Omega}_i)}
   \end{array}
\right\} \leq \hat{C}.
\end{equation}
Since $L(u)=\sum_{i=1}^5 \lambda_i^2\bE [\ell_{i}]\ge\lambda_1^2\bE [\ell_{1}] \ge0$,
it follows that every $\tilde{\ell}_{1}\in \tilde{\mS}^r_{1}$ has zero mean, i.e.,
$\bE[\tilde{\ell}_{1}]=0$. Next, the uniform boundedness of $\tilde{\ell}_{1}$ can be estimated as
\begin{equation*}
 \left \|\tilde{\ell}_{1}\right\|_{\infty} \leq \frac{2\|\ell_{1}\|_{\infty}}{r} = \frac{2\left\|\left( \beta_1 \Delta u_1 + g_1 \right)^2\right\|_{\infty}}{r}
 \leq \frac{2(\beta_1+1)^2 \hat{C}^2}{r}:= \varrho_1.
\end{equation*}
In addition, since $\mathbb{E} [\ell_{1}^2] = \int_{\Omega_1} \frac{1}{|\Omega_1|} (\beta_1\Delta u_1+g_1)^4dx \leq
(\beta_1+1)^2 \hat{C}^2\bE [\ell_{1}]$ for all $u\in F(\Omega)$, we obtain the following bound for the variance:
\begin{align*}
\mathrm{Var}[\tilde{\ell}_{1}]
\leq \frac{\bE [\ell_{1}^2]}{2rL(u)}
\leq \frac{(\beta_1+1)^2 \hat{C}^2\bE [\ell_{1}]}{2rL(u)}
\leq \frac{(\beta_1+1)^2 \hat{C}^2}{2r\lambda_1^2} :=\zeta_1^2.
\end{align*}
The same bounds hold for all $\tilde{\mS}_{i}^r$, and the constants are denoted by $\varrho_i$ and $\zeta_i^2$.

For any $u\in F(\Omega)$ and $r\geq r^*$, let $\tilde{\ell}_{1},\cdots,\tilde{\ell}_{5}$ be the associated functions defined by \eqref{eq:ell}.
Then, for any $t>0$, applying the AM-QM inequality together with \cref{lem:talagrand_ineq} to each $\tilde{\ell}_i$  yields, with probability at least $(1-e^{-t})^5$,
\begin{equation}
\label{eq:sum_tilde_s}
\sum_{i=1}^5 \frac{\lambda_i^2}{n_i}\sum_{j=1}^{n_i}\tilde{\ell}_{i}(\x_{i,j})
\leq \sum_{i=1}^5 \lambda_i^2\left(2\mathbb{E}\left[\sup_{\tilde{\ell}_{i} \in \tilde{\mS}_{i}^r}\frac{1}{n_i}\sum_{j=1}^{n_i}\tilde{\ell}_{i}(\x_{i,j})\right]+ \sqrt{\frac{2t\zeta_{i}^2}{n_i}}+\frac{4t\varrho_{i}}{3n_i}\right).
\end{equation}
Under assumption~\eqref{ass:phi}, \cref{lem:symmetrization} can be invoked to bound the first term on the right-hand side:
\begin{equation*}
\resizebox{\linewidth}{!}{$
\begin{aligned}
&\sum_{i=1}^5 \bE \left[\sup_{\tilde{\ell}_{i}\in \tilde{\mS}_{i}^r}\frac{1}{n_i}\sum_{j=1}^{n_i}\tilde{\ell}_{i}(\x_{i,j})\right]
 \leq \sum_{i=1}^5  \bE\left[\sup_{u\in F(\Omega)}\left| \frac{1}{n_i}\sum_{j=1}^{n_i}\frac{\ell_{i}(\x_{i,j})-\mathbb{E} [\ell_{i}]}{L(u)+ r}\right| \right] \\
\leq&  2 \sum_{i=1}^5 \left[  R_{n_i}\left(\left\{\frac{\ell_{i} }{L(u)+r}: u \in \mM_r  \right\}\right)+\sum_{k=0}^{\infty} R_{n_i}\left(\left\{\frac{\ell_{i} }{L(u)+r}: u \in \mM_{4^{k+1}r}\setminus\mM_{4^{k}r} \right\}\right)
\right]  \\
\leq& 2\left[\frac{R_{\mathbf{n}}\left(\mS_{r}(\Omega)\right)}{r} + \sum_{k=0}^{\infty} \frac{R_{\mathbf{n}}\left(\mS_{4^{k+1}r}(\Omega)\right)}{r4^k+r} \right] \leq 2\left[ \frac{\phi(r)}{r} + \sum_{k=0}^{\infty} \frac{\phi(r4^{k+1})}{r4^k} \right]
\leq \frac{10\phi(r)}{r}.
\end{aligned}
$}
\end{equation*}
Let $n_{\min}=\min\{n_1,\cdots,n_5\}$. Combining this estimate with inequality~\eqref{eq:sum_tilde_s} yields
\begin{equation}
\label{eq:tilde_l_sum}
 \begin{aligned}
\frac{L(u) - L^{\mathbf{n}}(u) }{L(u) +r} = \sum_{i=1}^5 \frac{\lambda_i^2}{n_i}\sum_{j=1}^{n_i}\tilde{\ell}_{i}(\x_{i,j})
  \leq
 \frac{ M_1\phi(r)}{r}+ \sqrt{\frac{M_2t}{n_{\min}r}} +\frac{M_3t}{n_{\min}r} := \eta(r),
 \end{aligned}
\end{equation}
where the constants are defined as $M_1 = 20\max \big\{ \lambda_1^2, \cdots, \lambda_5^2 \big\}$, $M_3 =\frac{8}{15}M_2$, and \[M_2 = 5 \hat{C}^2\max\big \{\lambda_1^2(\beta_1+1)^2, \lambda_2^2(\beta_2+1)^2, \lambda_3^2(\beta_1+\beta_2+1)^2, 9\lambda_4^2, 4\lambda_5^2\big\}.\]

Let $r_0 = \max\left\{2^{6+2c^{\circ}} r^*,  \frac{36M_2t}{n_{\min} }\right\},$ where $c^{\circ}$ is the small number in $\mathbb{N}^+$ such that $M_1\leq 2^{c^{\circ}}$. It can be verified that $\eta(r_0) < \frac{1}{2}$, since
\[
\frac{M_1\phi(r_0)}{r_0} \leq \frac{2^{c^{\circ}}\phi(2^{6+2c^{\circ}} r^*)}{2^{6+2c^{\circ}} r^*} \leq\frac{1}{8} , \quad \sqrt{\frac{M_2t}{n_{\min} r_0}} \leq \frac{1}{6}, \quad
\frac{M_3t}{n_{\min} r_0} \le \frac{2}{135}.
\]
By setting $r=r_0$, it follows from inequality~\eqref{eq:tilde_l_sum} that, for any $u \in F(\Omega)$,
\begin{align*}
\frac{L(u) - L^{\mathbf{n}}(u) }{L(u) +r_0}   \leq \sum_{i=1}^5 \sup_{\tilde{\ell}_{i}\in \tilde{\mS}_{i}^r} \frac{\lambda_i^2}{n_i}\sum_{j=1}^{n_i}\tilde{\ell}_{i}(\x_{i,j}) \leq \eta(r_0) < \frac{1}{2}.
\end{align*}
Recalling that \(\hat{C} \lesssim C_B \) and $n_i = \mathcal{O}(n)$ for $i=1,\cdots,5$, and substituting $u=\hat u$ into the above inequality, we obtain
\[
L(\hat u) \leq 2L^{\mathbf{n}}(\hat u) + r_0 \lesssim \inf_{u \in F(\Omega)}   \|u - \generalfunc^* \|^2_{X^{\infty}( \Omega)} +  \max\left\{ r^* , C_B^2\frac{t}{n }\right\}.
\]
Here, we have used the fact that, under the regularity assumption \(\generalfunc^*  \in B^4(\Omega_1) \cap B^4(\Omega_2)\) (hence \(\generalfunc^*  \in C^2(\overline{\Omega}_1) \cap C^2(\overline{\Omega}_2)\)), there exists a constant \(\tilde{C} > 0\), depending only on \(\{\lambda_i\}_{i=1}^5\) and \(\beta\), such that
\[
L^{\mathbf{n}}(\hat \generalNN) \leq L^{\mathbf{n}}(\generalNN) \leq \tilde{C} \|\generalNN - \generalfunc^* \|^2_{X^{\infty}( \Omega)}, \quad \forall \generalNN \in F(\Omega).
\]
\end{proof}

Building upon the previous lemma, it remains to determine a suitable sub-root function $\phi$.
To this end, we need to bound the covering numbers of RaNNs.

\begin{definition}[Covering number]
 Let $(E,\rho)$ be a metric space with metric $\rho$. A $\delta$-cover of a set $\mathcal{X} \subset E$ with respect to $\rho$ is a collection of points $\{x_1,\cdots, x_n\}\subset \mathcal{X}$ such that for every $x\in \mathcal{X}$, there exists $i\in \{1,\cdots,n\}$ such that $\rho(x,x_i)\leq \delta$. The $\delta$-covering number $\mN(\delta,\mathcal{X},\rho)$ is the cardinality of the smallest $\delta$-cover of the set $\mathcal{X}$ with respect to the metric $\rho$. Equivalently, the $\delta$-covering number
 $\mN(\delta,\mathcal{X},\rho)$ is the minimal number of balls $B_{\rho}(x,\delta)$ of radius $\delta$ needed to cover $\mathcal{X}$.
\end{definition}

\begin{lemma}\label{lem:covering_number_tanh_nn}
Let
$G = \Sigma_{M,q,C_B}^{\tanh}(\Omega)$ denote the class of shallow RaNNs defined in \eqref{def:stable_shallow_rnn_space} with activation $\tanh$. Given a fixed set of sampling points $\{\x_1, \dots, \x_n\} \subset \Omega$, for any $\epsilon>0$, the $\epsilon$-covering number of the derivative class $D^{\vec{k}}G$ with respect to the empirical $L^1$-norm $\|\cdot\|_{n,1}$ satisfies
\[
\mathcal{N}\left(\epsilon, D^{\vec{k}}G, \|\cdot\|_{n,1}\right)\leq \left(\frac{2q^{|\vec{k}|} C_BM}{\epsilon}\right)^{M}, \quad \forall |\vec{k}| \leq 2.
\]
\end{lemma}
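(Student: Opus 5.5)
Since the hidden parameters $\{(\vec{\omega}_m,b_m)\}_{m=1}^M$ are fixed, $D^{\vec{k}}G$ is the image of the coefficient set
\[
A:=\Big\{\vec{a}\in\R^M:\ \sum_{m=1}^M(1+\|\vec{\omega}_m\|)^2|a_m|\le C_B\Big\}
\]
under the linear map $\vec{a}\mapsto \sum_m a_m D^{\vec{k}}\Phi_m$, where $\Phi_m(\x)=\tanh(\vec{\omega}_m\cdot\x+b_m)$. The plan is to show this map is Lipschitz from a suitable norm on $\R^M$ into $\|\cdot\|_{n,1}$. Covering $A$ then gives a cover of $D^{\vec{k}}G$ with the same cardinality.

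\textbf{Step 1 (Lipschitz bound).} For $u,\tilde u\in G$ with coefficients $\vec{a},\tilde{\vec{a}}$, compute
\[
D^{\vec{k}}u-D^{\vec{k}}\tilde u=\sum_m (a_m-\tilde a_m)\,\vec{\omega}_m^{\vec{k}}\tanh^{(|\vec{k}|)}(\vec{\omega}_m\cdot\x+b_m).
\]
Use $|\tanh^{(j)}|\le 1$ for $j\le 2$ (indeed $|\tanh'|\le1$ and $|\tanh''|\le 4/(3\sqrt3)<1$) and $|\vec{\omega}_m^{\vec{k}}|\le q^{|\vec{k}|}$, since the parameters lie in $[-q,q]^{d+1}$. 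This gives, pointwise and hence in the empirical $L^1$ norm,
\[
\|D^{\vec{k}}u-D^{\vec{k}}\tilde u\|_{n,1}\le \|D^{\vec{k}}u-D^{\vec{k}}\tilde u\|_{\infty}\le q^{|\vec{k}|}\|\vec{a}-\tilde{\vec{a}}\|_{1}.
\]
If $q<1$, the same computation gives $\max(1,q)^{|\vec{k}|}$ instead. Alternatively, one can bound $|\vec{\omega}^{\vec{k}}|\le(1+\|\vec{\omega}\|)^2$ and work in the weighted norm $\|\vec{a}\|_w:=\sum_m(1+\|\vec{\omega}_m\|)^2|a_m|$; with that choice the factor $q^{|\vec{k}|}$ only appears as slack.

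\textbf{Step 2 (cover $A$).} Since $(1+\|\vec{\omega}_m\|)^2\ge1$, we have $A\subset\{\|\vec{a}\|_1\le C_B\}$. Rather than invoking a volumetric $\ell_1$-ball bound, I will use a crude coordinatewise grid. Each $|a_m|\le C_B$, so partition $[-C_B,C_B]$ into intervals of length $2\delta$ with $\delta=\epsilon/(Mq^{|\vec{k}|})$. This needs at most $C_B/\delta\le 2q^{|\vec{k}|}C_BM/\epsilon$ intervals per coordinate (assuming $\epsilon$ is small relative to $q^{|\vec{k}|}C_BM$; otherwise a single function already covers and the bound is trivial). In each nonempty product cell I will pick a point of $A$. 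Any $\vec{a}\in A$ lies in some cell and is then within $\ell_\infty$-distance $2\delta$, hence $\ell_1$-distance $2M\delta$, of the chosen point. Adjusting constants, e.g. taking cells of length $\delta$, gives $\ell_1$-distance $M\delta=\epsilon/q^{|\vec{k}|}$ with at most $2C_B/\delta=2q^{|\vec{k}|}C_BM/\epsilon$ cells per coordinate. Choosing representatives inside $A$ keeps the cover within the class, as the definition requires.

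\textbf{Step 3 (conclude).} Combining with Step 1, the images of the cell representatives form an $\epsilon$-cover of $D^{\vec{k}}G$ of cardinality $\big(2q^{|\vec{k}|}C_BM/\epsilon\big)^M$. The only delicate point is the constant bookkeeping in Step 2: matching the factor exactly $2$ and handling the $q<1$ case through the trivial regime. A volumetric argument would give $(1+2q^{|\vec{k}|}C_B/\epsilon)^M$, which is stronger but needs separate care for large $\epsilon$. Everything else is routine.
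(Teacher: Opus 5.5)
Your proposal is correct and is essentially the paper's argument. The paper covers each one-dimensional summand class $\{a_m\tanh(\vec{\omega}_m\cdot\x+b_m):|a_m|\le C_B\}$ (with the extra factor $q^{|\vec{k}|}$ for derivatives) at scale $\epsilon/M$ and multiplies the counts via Lemma 16.4 of Gy\"orfi et al.; this is exactly your coordinatewise grid combined with the $\ell_1$-Lipschitz bound. Your choice of representatives inside $A$ has the small advantage of giving centers in the class, as the paper's definition of covering number requires.

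A few bookkeeping points. As in the paper, the count is really $\lceil 2q^{|\vec{k}|}C_BM/\epsilon\rceil^M$. Once $\epsilon>2q^{|\vec{k}|}C_BM$ the stated inequality actually fails rather than being trivial, because the right-hand side drops below $1$. No separate treatment of $q<1$ is needed, since $|\vec{\omega}^{\vec{k}}|\le q^{|\vec{k}|}$ for every $q>0$. None of this affects the logarithmic use of the lemma in Dudley's integral.
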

\begin{proof}
The proof is provided in Appendix~\ref{proof:covering number}.
\end{proof}

We are now in a position to give the proof of \cref{thm:gen}.

\begin{proof}[Proof of \cref{thm:gen}]

We first construct a sub-root function $\phi(r)$ for which condition \eqref{ass:phi} holds.
For any $u \in \mathcal{M}_r(\Omega)$, there exists a constant $C_0>1$, depending only on $\{\lambda_i\}_{i=1}^5$ and $\beta$, such that
\begin{equation*}
 \max\left\{
   \begin{array}{l}
   \mathbb{E}_{\Omega_1}\left[(\Delta [u_1-\generalfunc^* _1])^2\right] ,\ \mathbb{E}_{\Omega_2}\left[(\Delta [u_2-\generalfunc^* _2])^2\right], \  \mathbb{E}_{\Gamma}\left[(\llbracket  u- \generalfunc^*  \rrbracket)^2\right],  \\
\mathbb{E}_{\Gamma}\left[(\llbracket \beta \nabla [u- \generalfunc^* ]\cdot\mathbf{n} \rrbracket)^2\right] , \ \mathbb{E}_{\partial\Omega}\left[(u_2-\generalfunc^* _2)^2\right]
   \end{array}
   \right\}\leq C_0 r.
\end{equation*}
Moreover, let $C_1=2\hat{C}\max\{ 2 \beta_1(\beta_1+1), \  2\beta_2(\beta_2+1),\ 2(\beta_1+\beta_2+1),\ 6\}$ and let $(\ell_1,\dots,\ell_5)$ be the associated tuple in $\mathcal{S}_{r}(\Omega)$. By \cref{lem:Ledoux-Talagrand contraction}, we have
\begin{equation}
\begin{aligned}\label{eq:bounds_of_rns}
 &R_{n_i}\left(\ell_{i}(\x)\right) \leq C_1  R_{n_i}\left( \Delta [u_i- \generalfunc^* _i]:\, \mathbb{E}_{\Omega_1}\left[(\Delta [u_1-\generalfunc^* _1])^2\right]\leq C_0 r \right),\ i=1, 2,
\\
&R_{n_3}\left(\ell_{3}(\x)\right) \leq C_1  R_{n_3} \left( \llbracket \beta \nabla [u- \generalfunc^* ]\cdot\mathbf{n} \rrbracket : \mathbb{E}_{\Gamma}\left[(\llbracket \beta \nabla [u- \generalfunc^* ]\cdot\mathbf{n} \rrbracket)^2\right]\leq C_0 r \right),
\\
&R_{n_4}\left(\ell_{4}(\x)\right) \leq C_1  R_{n_4} \left( \llbracket  u- \generalfunc^*  \rrbracket: \mathbb{E}_{\Gamma}\left[(\llbracket  u- \generalfunc^*  \rrbracket)^2\right]\leq C_0 r \right), \\
& R_{n_5}\left(\ell_{5}(\x)\right) \leq C_1   R_{n_5} \left(u_2-\generalfunc^* _2:  \mathbb{E}_{\partial\Omega}\left[(u_2-\generalfunc^* _2)^2\right]\leq C_0 r \right).
\end{aligned}
\end{equation}
Here we have used the fact that the quadratic function is Lipschitz continuous on bounded domains, together with the existence of the uniform bound $\hat{C}$ from \eqref{eq:boundedness_chat}.
In addition, we introduce the following function classes associated with the residuals
\[
\begin{aligned}
&F_i = \left\{\Delta [u_i - \generalfunc_i^*] \big|_{\Omega_i}: u\in F(\Omega) \right\},\ i=1,2,
&&F_3 = \left\{\llbracket \beta \nabla [u- \generalfunc^* ]\cdot\mathbf{n} \rrbracket \big|_{\Gamma}:  u\in F(\Omega)\right\}, \\
&F_4 = \left\{\llbracket  u- \generalfunc^*  \rrbracket \big|_{\Gamma} : u\in F(\Omega)\right\}, \
&&F_5 = \left\{\left.(u_2 - \generalfunc_2^*)\right|_{\partial\Omega}: u\in F(\Omega)\right\}.
\end{aligned}
\]
Let
\(
\operatorname{star}(F, f^*)
:= \left\{ f^* + \xi(f-f^*): f \in F,\ \xi \in [0,1] \right\}
\) denote the star-hull of $F$ around $f^*$, we then define the auxiliary functions
\begin{equation}
\label{eq:phi_1}
   \psi_i(r) = 10b\, R_{n_i}\!\left\{ f \in \operatorname{star}(F_i, 0) : \mathbb{E}[f^2] \le r \right\} + \frac{11 b^2 \log n_i}{n_i}, \ i=1,\cdots,5,
\end{equation}
where $b = \max\{1, 3\hat{C}, 2d\hat{C}, (1+\beta_1+\beta_2)\hat{C}\}$.
We now construct the desired function as follows:
\[
\phi(r) = C_1  \sum_{i=1}^5 \psi_i(C_0 r).
\]
By \cref{lem: subroot_f_p2}, \(\phi(r)\) is a sub-root function that satisfies \(
R_{\mathbf{n}}\big(\mathcal{S}_{r}(\Omega)\big) \leq \phi(r)\). It suffices to estimate the bound of the fixed-point $ r^* $ where $\phi( r^* )= r^* $.

Next we derive sub-root upper bounds for $\psi_{i}$ for any $r \geq r^*$.  \Cref{lem: subroot_f_p1} implies that $r \geq r^*$ is equivalent to $r\geq \phi(r)$, and thus
\begin{equation}
\label{eq:phi_2}
C_0 r \geq r \geq \max\left\{\psi_1(C_0 r),\cdots,  \psi_5(C_0 r) \right \}.
\end{equation}
As an example, we derive a sub-root upper bound for $\psi_1(C_0 r)$.
By \cref{lem: subroot_f_p3}, with probability at least $1 - 1/n_1$, we have
\[
\left\{ f \in \operatorname{star}\left(F_1, 0\right): \mathbb{E}[f^2] \leq C_0 r \right\} \subset \left\{ f \in \operatorname{star}\left(F_1, 0\right): \|f\|_{n_1,2}^2 \leq 2C_0 r \right\},
\]
and thus
\[
R_{n_1} \left\{ f \in \operatorname{star}\left(F_1, 0\right): \mathbb{E}[f^2] \leq C_0 r \right\} \leq R_{n_1} \left\{ f \in \operatorname{star}\left(F_1, 0\right): \|f\|_{n_1,2}^2 \leq 2C_0 r \right\} + \frac{b}{n_1}.
\]
Observing the fact that $R_n\leq \sup \widehat{R}_{n}$, it suffices to estimate $\widehat{R}_{n}$. By \cref{thm:dudley},
\begin{equation}
\label{eq:f1_r}
\begin{aligned}
\widehat{R}_* :=&\widehat{R}_{n_1}\left\{ f \in \operatorname{star}\left(F_1, 0\right): \|f\|_{{n_1},2}^2 \leq 2C_0 r \right\}\\
\leq&\inf_{0\leq \delta\leq \sqrt{2C_0 r}} \left\{4\delta + \frac{12}{\sqrt{n_1}}\int_\delta^{\sqrt{2C_0 r}} \sqrt{\log \mN\left(\epsilon, \operatorname{star}\left(F_1, 0\right), \|\cdot\|_{{n_1},2}\right)} \,d\epsilon\right\}.
\end{aligned}
\end{equation}
Using Lemma 16.5 in \cite{gyorfi2006distribution} and $\|\cdot\|_{n,2}\leq\sqrt{n} \|\cdot\|_{n,1}$, for any $\epsilon>0$, we have
\[
\mN\left(\epsilon, \operatorname{star}\left(F_1, 0\right), \|\cdot\|_{{n_1},2}\right) \leq \frac{2b}{\epsilon} \mN\left(\frac{\epsilon}{2\sqrt{n_1}}, F_1, \|\cdot\|_{{n_1},1}\right).
\]
By applying \cref{lem:covering_number_tanh_nn} together with Lemmas 16.4 and 16.5 from \cite{gyorfi2006distribution}, we obtain
\begin{equation}\label{eq:cover_n_lu}
 \begin{aligned}
 &\mathcal{N}\left(\epsilon, F_1, \|\cdot\|_{n_1,1}\right) \leq \left(\frac{2d C_BM q^2}{\epsilon}\right)^{dM}.
\end{aligned}
\end{equation}
From \eqref{eq:phi_1} and \eqref{eq:phi_2}, we have $\log n_1/{n_{1}}\leq r$. We therefore set $\delta=1/{n_1}$ in \eqref{eq:f1_r}. Combining the covering number estimates above, we obtain:
\begin{equation*}
\begin{aligned}
  \widehat{R}_*
  &\lesssim \frac{4}{{n_1}} +\sqrt{\frac{dM C_0r}{{n_1}}\log\left(4dC_B M(q+1)^2  {n_1}^{3/2}\right)}.
\end{aligned}
\end{equation*}
Then $\psi_1(C_0r)$ has a sub-root upper bound:
\[
\psi_1(C_0r)
\lesssim 10 b \sqrt{\frac{dM C_0r}{{n_1}}\log\left(4dC_B M (q+1)^2  {n_1}^{3/2}\right)}
+ \frac{40b + 10 b^2 +11b^2 \log n_1}{n_1}.
\]
Similar derivations apply to the upper bounds for $\psi_i(C_0r), i=2,\cdots, 5$ and are thus omitted for brevity.

Let $n_{\min}=\min\{n_1,\cdots,n_5\}$ and $n_{\max}=\max\{n_1,\cdots,n_5\}$.   Recalling that $C_1\lesssim \hat{C}$, $b\lesssim \hat{C} $ and $\hat{C} \lesssim C_B$, and combining estimates \eqref{eq:bounds_of_rns}-\eqref{eq:cover_n_lu}, we deduce that
\[
\phi(r) \lesssim \sqrt{\frac{C_B^4d M C_0 r}{n_{\min} } \log\!\left( d C_B M (q+1) n_{\max}\right)}.
\]
Combining this estimate with \cref{lem: subroot_f_p1}, we conclude that the fixed point $r^*$ satisfies the following bound:
\[
r^* \lesssim\frac{ C_B^4 d M\log ( dC_BM(q+1)n_{\max})}{n_{\min} }.
\]

Substituting the estimate of $r^*$ into \cref{lem:gen} and choosing $t = \log (n_{\max})$, we find that  with probability at least $(1-1/n_{\max})^5$,
\begin{align*}
L(\hat \generalNN) \lesssim \inf_{\generalNN \in F(\Omega)}   \|\generalNN - \generalfunc^* \|^2_{X^{\infty}( \Omega)} +   \frac{C_B^4 d M \log (dC_BM(q+1)n_{\max})}{n_{\min} } .
\end{align*}
Since $L(u) \lesssim C_B^2$ and $1-(1-1/n)^5 < 5/n$ for all $n\in\mathbb{N}^+$, taking the expectation over the random sampling of the datasets~\eqref{eq:training_datasets}, we obtain
\[
\begin{aligned}
\mathbb{E}_{\{\x\}} \left[L(\hat \generalNN)\right] &\lesssim
\inf_{\generalNN \in F(\Omega)}   \|\generalNN - \generalfunc^* \|^2_{X^{\infty}( \Omega)} +  \frac{C_B^4dM \log ( dC_BM(q+1)n_{\max})}{n_{\min} } +  \frac{ 5 C_B^2}{n_{\max}} \\
&\lesssim  \inf_{\generalNN \in F(\Omega)}   \|\generalNN - \generalfunc^* \|^2_{X^{\infty}( \Omega)} +  \frac{C_B^4dM \log (dC_BM(q+1)n_{\max})}{n_{\min} }.
\end{aligned}
\]
Finally, for $n\in \mathbb{N}^+$, setting $n_i = \mathcal{O}(n)$ for $i=1,\cdots,5$ concludes the proof.
\end{proof}

\section{Numerical experiments}\label{sec:numerical_results}
This section presents several numerical examples to demonstrate the theoretical findings.
The overall setting in all experiments is summarized as follows:
\begin{itemize}
\item \textit{Network setting.} We use the RaNN hypothesis space $F(\Omega)$ defined in \cref{eq:hypothesis_space_rnn} for function approximation. The hidden-layer parameters are initialized from a truncated normal distribution (\texttt{scipy.stats.truncnorm}) supported on the interval $[-q,q]$, with $q=10$ by default.

\item \textit{Generation of data.} A total of $N$ collocation points are sampled, with $\lfloor N/2\rfloor$ drawn from the interior of $\Omega$, $\lfloor N/4\rfloor$ from the interface $\Gamma$, and $\lfloor N/4\rfloor$ from the boundary $\partial\Omega$.

\item \textit{Implementation.}
We solve the unconstrained variant of \eqref{eq:empirical_loss_function_of_rann} with \texttt{scipy.linalg.lstsq}; the prescribed constraints are sufficiently loose and the coefficients remain finite at the empirical solution. All experiments are performed in Python~3.8 on a workstation equipped with an Intel Xeon W-3335 CPU @3.40 GHz.
\end{itemize}
All reported results are averaged over ten independent runs, each using independently sampled training data and random initializations.

\hypertarget{example1}{\textbf{Example} 1.}
Let $\Omega=[-1,1]^2$, and define the interface curve by
\(
\Gamma := \{\x \in \Omega : v(\x)-\pi/6.28 = 0\},\) where
\(v(\x) = \sqrt{x_1^2+x_2^2}
\) and \(v_0=\pi/6.28\).
The exact solution is prescribed as follows:
\begin{equation*}
\generalfunc^*(\x) =
\begin{cases}
\dfrac{v(\x)^3}{\beta_1}, & \x \in \Omega_1, \\
\dfrac{v(\x)^3}{\beta_2}+\left(\dfrac{1}{\beta_1}-\dfrac{1}{\beta_2}\right)v_0^3,
& \x \in \Omega_2.
\end{cases}
\end{equation*}
The functions $g$, $g_d$, $g_n$, and $g_b$ are derived directly from $\generalfunc^*$ and coefficient $\beta$. This benchmark problem has been previously investigated in \cite{chu2010new, wang2020mesh}.

\begin{figure}[htbp]
 \centering
\includegraphics[width=0.7\linewidth]{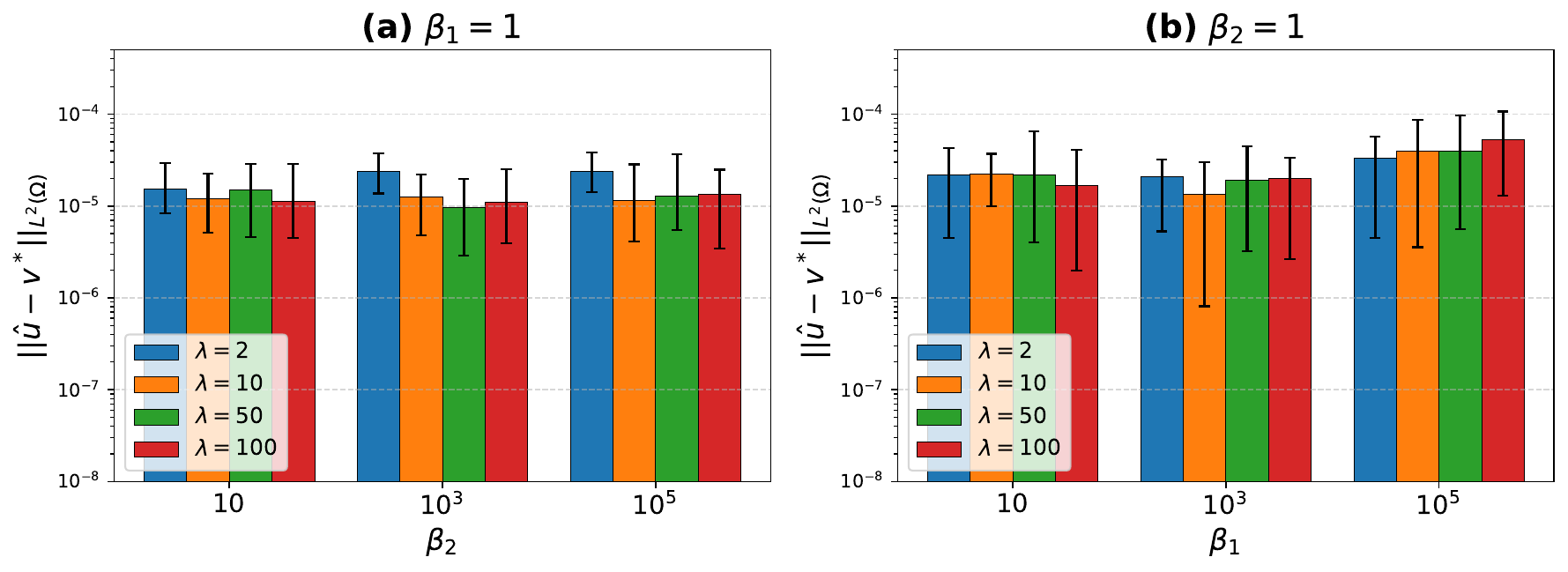}
 \caption{Mean \(L^2\)-norm errors in Example~\protect\hyperlink{example1}{1} for different coefficient contrasts and penalty weights \(\lambda=\lambda_4=\lambda_5\). In panel~(a), \(\beta_1=1\) and \(\beta_2\) varies; in panel~(b), \(\beta_2=1\) and \(\beta_1\) varies. Here, \(q=10\), \(\text{DOFs}=2400\), and \(N=16000\).}
 \label{fig:lam_ex1}
\end{figure}

To assess the sensitivity of the RaNN method to the choice of the contrast-independent penalty weights in \eqref{eq:penalty_weights}, we set \(\lambda_4=\lambda_5=\lambda\) and solve Example~\protect\hyperlink{example1}{1} using different values of \(\lambda\) under several coefficient contrasts. \Cref{fig:lam_ex1} reports the mean \(L^2\)-norm errors over ten independent runs, with error bars indicating the corresponding minimum and maximum values. When $\beta_1=1$, increasing \(\lambda\) from \(2\) improves the accuracy slightly, whereas the errors remain broadly comparable across the tested values of $\lambda$ when $\beta_2=1$. We therefore choose \(\lambda=10\) as a moderate choice that consistently yields competitive accuracy across the tested coefficient contrasts. This setting is used in all subsequent experiments.

\begin{table}[htbp]
\centering
\fontsize{7}{8}\selectfont
\begin{threeparttable}
\caption{The $L^2$-norm errors and $H^1$ semi-norm errors for different $\beta$ in Example \protect\hyperlink{example1} {1}.}
\begin{tabular}{cccccc}
\toprule
 \multicolumn{2}{c}{Coefficient $\beta$} &
 \multicolumn{2}{c}{\makecell{ MsFEM \cite{chu2010new} \\ ($h=1/64$)}} &
 \multicolumn{2}{c}{\makecell{ Ours \\ ($q=10, \text{DOFs}=2400, N=16000$)}}\cr
 \cmidrule(lr){3-4} \cmidrule(lr){5-6}
 & & $L^2$-norm & $H^1$ semi-norm & $L^2$-norm & $H^1$ semi-norm \cr
 \midrule
\multirow{3}{*}{$\beta_2=1$}& $\beta_1=10^1$
& 3.6619e-04 & 3.1374e-02 & 1.7027e-05 & 4.5565e-05
\\
& $\beta_1=10^3$
& 3.6482e-04 & 3.0915e-02 & 1.2205e-05 & 2.9530e-05
\\
&$\beta_1=10^5$
& 3.6366e-04 & 3.0662e-02 & 3.1100e-05 & 7.5262e-05
\cr
 \midrule
\multirow{3}{*}{$\beta_1=1$}& $\beta_2=10^1$
& 7.7605e-05 & 9.9543e-03 & 9.5298e-06&  2.9385e-04
\\
& $\beta_2=10^3$
& 5.4716e-05 & 6.2600e-03  & 1.0830e-05 & 2.3534e-04
\\
&$\beta_2=10^5$
& 5.9580e-05 & 6.2529e-03  & 1.0211e-05 & 2.5150e-04
\\
\bottomrule
\end{tabular}
\label{tab:table2_ex1}
\end{threeparttable}
\end{table}

\begin{figure}[htbp]
 \centering
\includegraphics[width=1\linewidth]{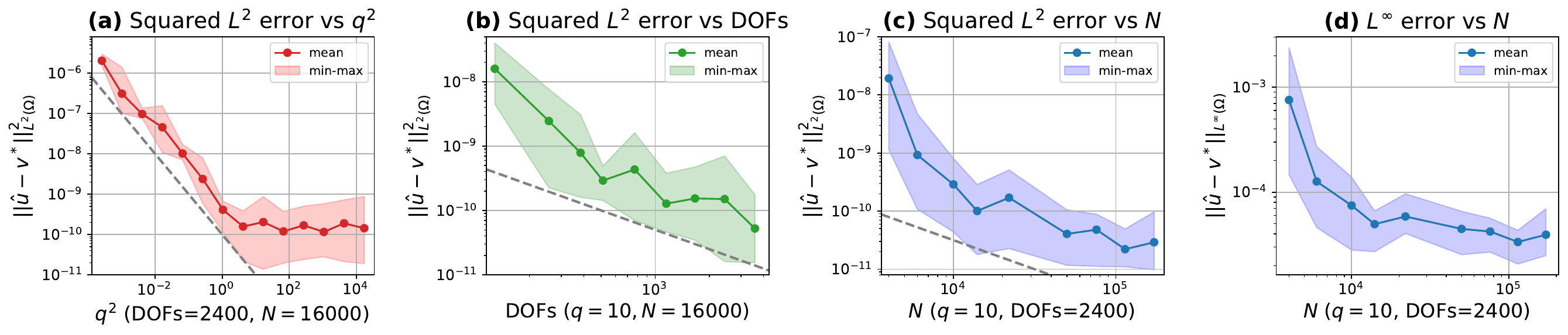}
 \caption{Errors for different $q$, DOFs, and $N$ in Example \protect\hyperlink{example1}{1}. The dashed lines in panels (a), (b), (c) indicate the reference convergence rates $\mathcal{O}(1/q^2)$, $\mathcal{O}(1/M)$,  and $\mathcal{O}(1/N)$, respectively.}
 \label{fig:f2_ex1}
\end{figure}

We next compare the RaNN method with the multiscale finite element method (MsFEM) \cite{chu2010new}; the results are reported in \cref{tab:table2_ex1}. In all test cases, RaNN attains smaller $L^2$-norm and $H^1$ semi-norm errors than those obtained by MsFEM.
Subsequently, to verify the error estimates in \cref{thm:main}, we consider Example~\protect\hyperlink{example1}{1} with $\beta_1=1$ and $\beta_2=10^3$. As shown in \cref{fig:f2_ex1}(a), the squared $L^2$ error initially decays at a rate broadly consistent with $\mathcal{O}(q^{-2})$, in agreement with the corresponding $q$-dependent term in \cref{thm:main}. For larger values of $q$, the error reaches a plateau, which is consistent with the truncation error becoming smaller than the remaining error components. The dependence on the number of degrees of freedom ($2M$, denoted by DOFs) is shown in \cref{fig:f2_ex1}(b). The numerical errors follow the anticipated $\mathcal{O}(1/M)$ rate. \Cref{fig:f2_ex1}(c) shows that the error decreases with the sample size $N$ at a rate broadly consistent with $\mathcal{O}(N^{-1})$. Furthermore, \cref{fig:f2_ex1}(d) shows that the $L^\infty$ error generally decreases with $N$ before reaching a plateau. Overall, these results are consistent with the convergence rates predicted by \cref{thm:main} and provide empirical support for the theoretical error estimate.

\hypertarget{example2}{\textbf{Example} 2.} This example examines the error behavior in the presence of a complex interface and high coefficient contrasts. Let $\Omega = [-1,1]^2$. We consider the interface problem \eqref{eq:interface_problem} with a highly oscillatory interface defined by
\(
 (x_1 - 0.02\sqrt{5})^2 + (x_2 - 0.02\sqrt{5})^2 = r^2(\theta),
\)
where $r(\theta) = 0.4 + 0.2\sin(20 \theta)$ for $\theta \in [0, 2\pi)$, as shown in  \cref{fig:ex2_fig1} (a).
The exact solution is prescribed as
\[
\generalfunc^*(\x) =
\begin{cases}
\frac{1}{\beta_1} e^{x_1 x_2}, & \x \in \Omega_1,\\
\frac{1}{\beta_2} \sin(\pi x_1) \sin(\pi x_2), & \x \in \Omega_2.
\end{cases}
\]
The source term, interface jump data, and boundary data
$g$, $g_d$, $g_n$, and $g_b$ are obtained by substituting
$\generalfunc^*$ into Eq. \eqref{eq:interface_problem}.

\begin{table}[htbp]
\centering
\fontsize{7}{8}\selectfont
\setlength{\tabcolsep}{4pt}
\renewcommand{\arraystretch}{1.15}
\begin{threeparttable}
\caption{Relative \(L^2\) errors for different values of \(\beta\) in Example~\protect\hyperlink{example2}{2}.}
\label{tab:ex2_beta}
\begin{tabular}{@{}ccccc@{}}
\toprule
\((\beta_1,\beta_2)\)
&
\makecell{LRaNN-FDM~\cite{li2025local} \\
\(\left(\begin{gathered}\mathrm{DOFs}=640\\ N=1920
\end{gathered}\right)\)}
&
\makecell{Mixed LRaNN-FDM~\cite{li2025local} \\
\(\left(\begin{gathered}
\mathrm{DOFs}=1920\\
N=1920
\end{gathered}\right)\)}
&
\makecell{Ablation \tnote{a} \\
\(\left(\begin{gathered}\mathrm{DOFs}=640 \\ N=1800\end{gathered}\right)\)}
&
\makecell{Ours \tnote{b} \\
\(\left(\begin{gathered}\mathrm{DOFs}=640 \\ N=1800\end{gathered}\right)\)}

\\
\midrule
 $(1, 10^2)$ & 1.75e-08 & 4.67e-09  &  1.64e-09  & 4.03e-10
\\
$(1, 10^4)$ & 1.44e-08 & 2.98e-08  &   1.90e-09 & 7.86e-12
 \\
$(10^{-4}, 10^4)$ & 1.04e-08  & 7.94e-09  & 1.14e-08    & 7.04e-12
\\
$(10^{-6}, 10^6)$ & 2.25e-07  & 3.16e-08 &  2.79e-07  &  4.53e-12
\\
$(10^2, 10^{-2})$ & 4.53e-08  & 7.06e-08  &  9.04e-07  &  2.23e-08
\\
$(10^4, 10^{-4})$ & 4.01e-06  & 1.60e-05 &  7.82e-04   &  2.97e-08
\\
\bottomrule
\end{tabular}
\begin{tablenotes}[flushleft]
\footnotesize
\item[a] The penalty weights are taken as in \cite{li2025local}:
\(\lambda_1=\lambda_2=1\) and
\(\lambda_3=\lambda_4=\lambda_5=100\).
\item[b] The penalty weights \(\lambda_i\) are chosen according to
Eq.~\eqref{eq:penalty_weights}.
\end{tablenotes}
\end{threeparttable}
\end{table}

This test problem coincides with Case~1 of Example~4.1 in \cite{li2025local}, allowing a direct comparison between the present formulation and the LRaNN-FDM method. Both methods employ local randomized single-hidden-layer networks and determine only the output-layer coefficients by linear least squares. The main differences are that LRaNN-FDM approximates differential operators by finite differences, uses empirically chosen penalty weights, and adopts problem-specific sampling ranges for the hidden-layer parameters. \Cref{tab:ex2_beta} reports the relative \(L^2\) errors and includes those of the mixed LRaNN-FDM variant for reference. The fixed-weight ablation performs less competitively than the reference methods in the last two cases, whereas the theoretically derived penalty weights yield improved accuracy in all cases.

\begin{figure}[htbp]
 \centering
\includegraphics[width=1\linewidth]{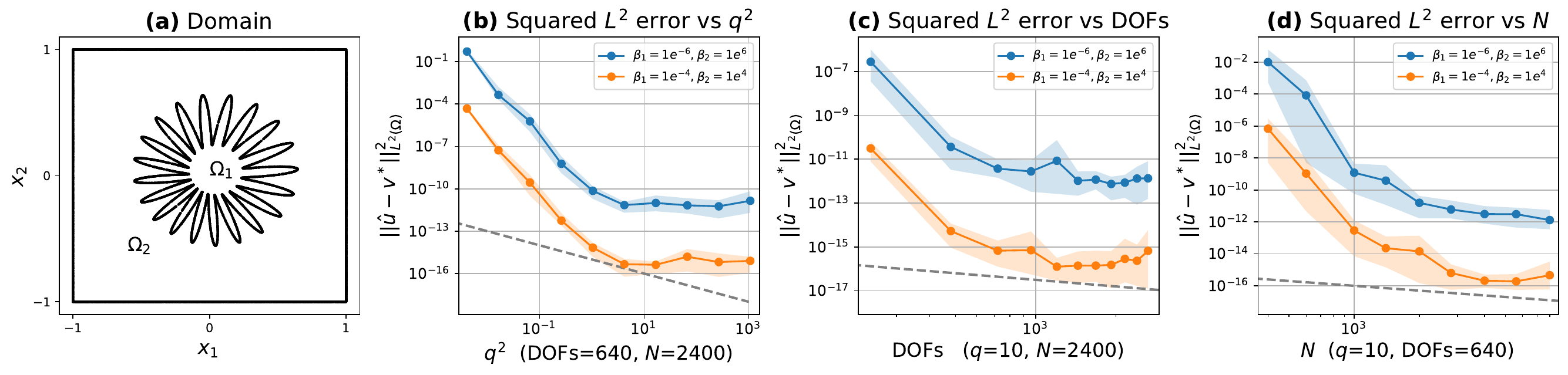}
 \caption{Errors for different values of $q$, DOFs, $N$ and $\beta$ in Example \protect\hyperlink{example2}{2}. The dashed lines in panels (b), (c), (d) indicate the reference convergence rates $\mathcal{O}(1/q^2)$, $\mathcal{O}(1/M)$,  and $\mathcal{O}(1/N)$, respectively.}
 \label{fig:ex2_fig1}
\end{figure}

\Cref{fig:ex2_fig1} shows the error behavior for $(\beta_1,\beta_2)=(10^{-4},10^4)$ and $(10^{-6},10^6)$, corresponding to coefficient contrasts of $10^8$ and $10^{12}$, respectively. As shown in panels~(b)-(d), the squared \(L^2\) errors exhibit similar convergence trends in the two cases, despite the increase in coefficient contrast.
The observed decay is even faster than that indicated by the reference slopes over certain parameter ranges. This faster decay may be attributed to the higher regularity of the exact solution $v^*$, which leads to more favorable approximation properties, as discussed in Remark~\ref{remark:high_order_barron}. A systematic study of this behavior will be pursued in future work.  Overall, the results indicate that the componentwise convergence behavior predicted by \cref{thm:main} persists in the presence of a highly oscillatory interface and high coefficient contrast.

\hypertarget{example3}{\textbf{Example} 3.}
To verify the theoretical estimates in three dimensions, we finally consider an interface problem from \cite{li2025local}.  The problem is posed on $\Omega=[-1,1]^3$ with the spherical interface
\(
\Gamma: x_1^2 + x_2^2 + x_3^2 = 0.75^2
\)
and the exact solution is prescribed as
\[
\generalfunc^*(\x)  =
\begin{cases}
5e^{x_1^2 + x_2^2 + x_3^2} + 20, & \x \in \Omega_1, \\
10(x_1 + x_2 + x_3), & \x \in \Omega_2.
\end{cases}
\]
The functions $g, g_d, g_n$ and $g_b$ are derived directly from $\generalfunc^*$ and the given coefficients.

\begin{table}[htbp]
\centering
\fontsize{7}{8}\selectfont
\begin{threeparttable}
\caption{$L^2$-norm errors for different $\beta$ in Example \protect\hyperlink{example3} {3}.}
\begin{tabular}{ccccccc}
\toprule
 $(\beta_1, \beta_2)$ &   $(1, 10)$  &   $(1, 10^2)$  & $(1, 10^3)$ \cr
 \midrule
\makecell{ LRaNN-FDM \cite{li2025local} \\ ($\text{DOFs}=2560, N=6075$)} &5.00e-04
&   1.05e-04 & 3.86e-05\\
 \midrule
\makecell{Immersed FEM \cite{li2025local} \\  ($\text{DOFs}=551368, h=1/81$)} & -
& 3.68e-03   & 1.36e-02  \\
 \midrule
\makecell{Body-fitted linear FEM \cite{li2025local} \\  ($\text{DOFs}=165202, h=1/30$)  }
& 1.65e-02 & 1.70e-02  & 1.70e-02\\
 \midrule
\makecell{Ours \\  ($\text{DOFs}=2560, N=6000$)  }
& 3.76e-07 & 5.37e-07  & 2.34e-06  \\
\bottomrule
\end{tabular}
\label{tab:tab1_ex3}
\end{threeparttable}
\end{table}

\begin{figure}[htbp]
 \centering
\includegraphics[width=1\linewidth]{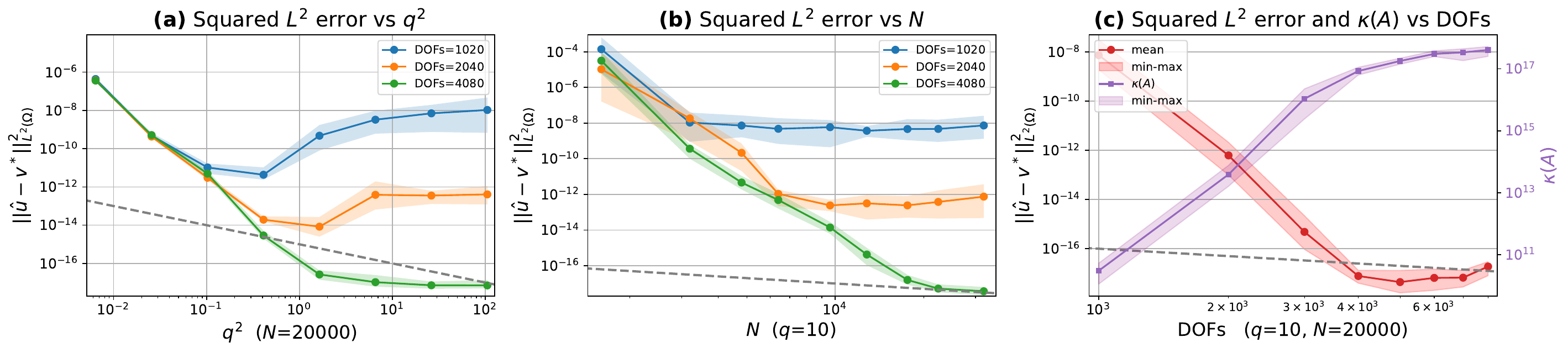}
 \caption{Errors for different values of $q$, $N$ and  DOFs in Example \protect\hyperlink{example3}{3}. The dashed lines in panels (a), (b), (c) indicate the reference convergence rates $\mathcal{O}(1/q^2)$, $\mathcal{O}(1/N)$,  and $\mathcal{O}(1/M)$, respectively.}
 \label{fig:fig1_ex3}
\end{figure}

The results in \cref{tab:tab1_ex3} demonstrate the accuracy of the proposed RaNN solver for the three-dimensional interface problem. It achieves $L^2$-norm errors up to four orders of magnitude smaller than the reported finite-element errors while using approximately two orders of magnitude fewer DOFs, and its accuracy remains stable under large coefficient contrasts. For the representative case with $\beta_1=1$ and $\beta_2=10^3$, \cref{fig:fig1_ex3} further shows the dependence of the error on $q$, $N$, and $M$. As $q$ or $N$ increases, the corresponding truncation or generalization error decreases until the approximation error associated with DOFs becomes dominant. Further increasing DOFs reduces the approximation error and improves accuracy; however, as shown in \cref{fig:fig1_ex3} (c), the accompanying growth in the condition number $\kappa(A)$ eventually amplifies floating-point errors, leading to an error plateau. These observations are consistent with the error decomposition underlying the theoretical estimate and further demonstrate the effectiveness of the RaNN method for three-dimensional interface problems.

\section{Conclusions}
\label{sec:conclusions}
This work develops a unified theoretical and numerical framework for solving elliptic interface problems using neural networks in a meshfree manner. From a theoretical viewpoint, we present the first systematic error analysis for neural network methods applied to elliptic interface problems, accounting for approximation, generalization, and optimization errors. More broadly, our approximation analysis carries over to a wide range of RaNN-based formulations, providing approximation guarantees without the restrictive assumptions frequently required in prior work. Meanwhile, our generalization analysis can be extended to other neural PDE solvers beyond RaNNs, leading to sharper generalization rates. Numerically, parameter choices guided by our theoretical analysis lead to consistent improvements on benchmark problems compared with heuristically tuned neural network methods and classical mesh-based finite element methods.
The experiments validate the theoretical error estimates and demonstrate the robustness of the proposed RaNN framework, even in the presence of high coefficient contrasts.

Several interesting directions remain to be explored. One direction is to extend the analysis to nonlinear and time-dependent PDEs, which would broaden its applicability and clarify the role of randomization in dynamical settings. Another is to develop adaptive randomization and sampling strategies, in which the parameter distribution is theoretically adjusted according to the PDE setting. In addition, extending the RaNN framework to operator learning  may provide a pathway for quantifying the total error decomposition in neural solvers applied to parametric PDEs.

\bibliographystyle{plain}
\bibliography{references}

\appendix

\section{Auxiliary Definitions and Lemmas}\label{app:Auxiliary Definitions and Lemmas}

For the reader's convenience, we present the auxiliary lemmas used in this paper.

\begin{lemma}[Symmetrization Lemma \cite{wainwright2019high}]\label{lem:symmetrization}
 Let $V$ be a class of functions on the domain $\Omega$, and let $\mP_\Omega$ is a given distribution on $\Omega$. Then
 \begin{equation*}
 \mathbb{E}\left[\sup_{v\in V} \left|\frac{1}{n} \sum_{j=1}^n v(\x_j) - \mathbb{E}_{\x\sim \mP_\Omega} v(\x) \right|\right]
 \leq 2 R_n(V).
 \end{equation*}
\end{lemma}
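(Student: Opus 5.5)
We prove the Symmetrization Lemma by the standard ghost-sample argument. Let $\x_1',\dots,\x_n'$ be an independent copy of the sample $\x_1,\dots,\x_n$, drawn i.i.d.\ from $\mP_\Omega$. For every fixed $v\in V$ we have $\mathbb{E}_{\x\sim\mP_\Omega}v(\x)=\mathbb{E}_{\{\x'\}}\big[\frac1n\sum_{j=1}^n v(\x_j')\big]$. This lets us write the centered empirical process as a conditional expectation:
\[
\frac1n\sum_{j=1}^n v(\x_j)-\mathbb{E} v
=\mathbb{E}_{\{\x'\}}\Big[\frac1n\sum_{j=1}^n \big(v(\x_j)-v(\x_j')\big)\,\Big|\,\x_1,\dots,\x_n\Big].
\]
Next we pull the absolute value and then the supremum inside the conditional expectation; both are convex, so Jensen's inequality applies. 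Taking the outer expectation over $\{\x\}$ then gives
\[
\mathbb{E}\Big[\sup_{v\in V}\Big|\frac1n\sum_j v(\x_j)-\mathbb{E}v\Big|\Big]\le \mathbb{E}_{\{\x\},\{\x'\}}\Big[\sup_{v\in V}\Big|\frac1n\sum_j \big(v(\x_j)-v(\x_j')\big)\Big|\Big].
\]
Measurability of the supremum is taken for granted, as is customary. If needed, one restricts to countable subclasses, which suffices for the continuous function classes used in this paper.

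The key step is to introduce i.i.d.\ Rademacher signs $\tau_1,\dots,\tau_n$, independent of both samples. For each $j$, the pair $(\x_j,\x_j')$ is exchangeable, so $v(\x_j)-v(\x_j')$ has the same joint law, simultaneously over all $v\in V$, as $\tau_j\big(v(\x_j)-v(\x_j')\big)$. Conditioning on $\vec{\tau}$ makes this precise: for a fixed sign pattern, swapping $\x_j\leftrightarrow\x_j'$ exactly at the indices with $\tau_j=-1$ is a measure-preserving map of the product space. Hence
\[
\mathbb{E}\Big[\sup_{v\in V}\Big|\frac1n\sum_j \big(v(\x_j)-v(\x_j')\big)\Big|\Big]=\mathbb{E}\Big[\sup_{v\in V}\Big|\frac1n\sum_j \tau_j\big(v(\x_j)-v(\x_j')\big)\Big|\Big].
\]
I expect this exchangeability step to be the only point requiring care. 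The subtle part is that the equality in law must hold for the whole process indexed by $V$, not just pointwise in $v$. This is exactly why one argues conditionally on $\vec{\tau}$ with the coordinate-swap map, rather than appealing to the distribution of each summand separately.

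Finally, we apply the triangle inequality inside the supremum, splitting it into the $\{\x\}$-part and the $\{\x'\}$-part:
\[
\sup_{v}\Big|\frac1n\sum_j\tau_j\big(v(\x_j)-v(\x_j')\big)\Big|\le\sup_{v}\Big|\frac1n\sum_j\tau_jv(\x_j)\Big|+\sup_{v}\Big|\frac1n\sum_j(-\tau_j)v(\x_j')\Big|.
\]
Since $-\vec{\tau}$ has the same distribution as $\vec{\tau}$ and $\{\x'\}$ has the same distribution as $\{\x\}$, each term has expectation $R_n(V)$. This yields the bound $2R_n(V)$.
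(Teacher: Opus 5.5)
Your proof is correct. You introduce an independent ghost sample and apply Jensen, then insert Rademacher signs via the invariance of the product measure under coordinate swaps, and finish with the triangle inequality; this is the standard symmetrization proof in the cited reference \cite{wainwright2019high}, which the paper simply imports without proof. Your final splitting step produces two suprema of absolute values, each with expectation equal to the paper's $R_n(V)$ (which carries the absolute value inside the supremum), so the constant $2$ follows directly.
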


In addition, the following lemma shows the relationship between the covering number and the Rademacher complexity.
\begin{lemma}[Dudley’s integral theorem \cite{lecture2022note}] \label{thm:dudley} Let $V$ be a function class such that $\sup_{v\in V}\|v\|_{n,2}\leq M$. Then $\widehat{R}_n(V) $ satisfies
\begin{equation*}
\widehat{R}_n(V) \leq \inf_{0\leq \delta\leq M} \left\{4\delta + \frac{12}{\sqrt{n}}\int_\delta^M \sqrt{\log \mN(\epsilon, V, \|\cdot\|_{n,2})} \,d\epsilon\right\}.
\end{equation*}
\end{lemma}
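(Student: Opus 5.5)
The statement to prove is Dudley's entropy-integral bound for the empirical Rademacher complexity. I would use the standard chaining argument, working conditionally on the fixed sample $x_1,\dots,x_n$. In that setting every $v\in V$ is identified with the vector $(v(x_1),\dots,v(x_n))$, and distances are measured in $\|\cdot\|_{n,2}$.

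\textbf{Scales and covers.} Fix $\delta\in[0,M]$ and set $\epsilon_j := M 2^{-j}$ for $j\ge 0$. Let $N$ be the largest index with $\epsilon_N>\delta$; we may assume $N\ge 1$, since otherwise the trivial bound $\widehat{R}_n(V)\le M\le 4\delta$ already suffices. For each $j$, choose a minimal $\epsilon_j$-cover $V_j$ of $V$ in $\|\cdot\|_{n,2}$. Take $V_0=\{0\}$; this is a valid cover at scale $\epsilon_0=M$ because $\sup_V\|v\|_{n,2}\le M$. (If the absolute value in the definition forces the cover to lie inside $V$, a single element of $V$ works, at the cost of a harmless constant.) For each $v$, let $\pi_j v\in V_j$ be a point within distance $\epsilon_j$ of $v$.

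\textbf{Telescoping decomposition.} Write
\[
v=(v-\pi_N v)+\sum_{j=1}^N(\pi_j v-\pi_{j-1}v)+\pi_0 v .
\]
The three pieces are handled separately.
\begin{itemize}
\item The residual term satisfies $\frac1n\sum_i\tau_i(v-\pi_N v)(x_i)\le \|v-\pi_N v\|_{n,2}\le\epsilon_N\le 2\delta$, by Cauchy--Schwarz.
\item For each link $j$, the increments $\pi_jv-\pi_{j-1}v$ range over at most $|V_j||V_{j-1}|\le|V_j|^2$ vectors, each of $\|\cdot\|_{n,2}$-norm at most $\epsilon_j+\epsilon_{j-1}=3\epsilon_j$. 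Massart's finite-class lemma bounds their Rademacher average by $3\epsilon_j\sqrt{2\log(2|V_j|^2)}/\sqrt n$; the factor $2$ inside the logarithm accounts for the absolute value, i.e.\ for symmetrizing the class.
\item The $\pi_0$ term vanishes, or is handled by the same Massart bound.
\end{itemize}

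\textbf{Summing the chain.} Adding the link bounds gives a sum of the form $\frac{C}{\sqrt n}\sum_{j=1}^N\epsilon_j\sqrt{\log\mN(\epsilon_j,V,\|\cdot\|_{n,2})}$. Since $\epsilon_j=2(\epsilon_j-\epsilon_{j+1})$ and $\epsilon\mapsto\mN(\epsilon)$ is nonincreasing, each summand satisfies
\[
\epsilon_j\sqrt{\log\mN(\epsilon_j)}\le 2\int_{\epsilon_{j+1}}^{\epsilon_j}\sqrt{\log\mN(\epsilon)}\,d\epsilon .
\]
Summing over $j$ yields an integral over $[\epsilon_{N+1},M]$. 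We have $\epsilon_{N+1}\le\delta$ and $\epsilon_{N+1}\ge\delta/2$, so the range reaches down to $\delta/2$ rather than $\delta$. The stated lower limit $\delta$ is recovered by the usual bookkeeping: the slack $4\delta$ versus $2\delta$ absorbs the final interval. Finally, take the infimum over $\delta$.

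\textbf{Main obstacle.} The main difficulty is matching the exact constants $4$ and $12$, together with the precise lower integration limit. The plan is to follow the normalization in the cited lecture notes. The first choice to try is $\epsilon_N$ defined as the smallest scale exceeding $2\delta$. The constant $2$ in $\log(2|V_j|^2)$ is then absorbed whenever $\mN\ge2$; the degenerate case $\mN=1$ is treated separately, where the corresponding increment is zero.
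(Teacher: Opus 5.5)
Your chaining scheme is the standard argument behind the cited lecture notes; the paper itself gives no proof and only cites them. The genuine gap is the absolute value in the paper's definition of $\widehat R_n$. You treat it as costing only an extra factor $2$ inside Massart's logarithm plus harmless constants. In fact it creates an \emph{additive} term at the bottom of the chain that the entropy integral does not control. No argument can fix this, because with that definition the statement is false. Take $V=\{v\}$ with $v(x_i)=1$ for all $i$ and $M=1$. Then $\mathcal{N}(\epsilon,V,\|\cdot\|_{n,2})=1$ for every $\epsilon>0$, so the right-hand side at $\delta=0$ equals $0$, whereas $\widehat R_n(V)=\mathbb{E}_\tau\bigl|\frac1n\sum_i\tau_i\bigr|>0$.

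In your argument the failure is the claim that in the degenerate case $\mathcal{N}=1$ ``the corresponding increment is zero.'' This fails at $j=1$. With $V_0=\{0\}$, the increment $\pi_1v-\pi_0v$ is the single center of $V_1$, a fixed but generally nonzero vector $c$. By Khintchine, its absolute Rademacher average is at least $\|c\|_{n,2}/\sqrt{2n}$, while $\int_{\epsilon_2}^{\epsilon_1}\sqrt{\log\mathcal{N}(\epsilon)}\,d\epsilon$ may vanish. Choosing $V_0=\{v_0\}\subset V$ instead just moves the same positive quantity $\mathbb{E}_\tau\bigl|\frac1n\sum_i\tau_iv_0(x_i)\bigr|$ into the $\pi_0$ piece.

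What is provable is the bound for $\mathbb{E}_\tau\sup_{v\in V}\frac1n\sum_i\tau_iv(x_i)$, without the absolute value, which is the version the lecture notes prove. For that quantity your outline works verbatim and gives the exact constants:
\begin{itemize}
\item The $\pi_0$ term has mean zero.
\item Massart's lemma without the extra $2$ bounds link $j$ by $3\epsilon_j\sqrt{2\log|V_j|^2}/\sqrt n=12(\epsilon_j-\epsilon_{j+1})\sqrt{\log\mathcal{N}(\epsilon_j)}/\sqrt n\le\frac{12}{\sqrt n}\int_{\epsilon_{j+1}}^{\epsilon_j}\sqrt{\log\mathcal{N}(\epsilon)}\,d\epsilon$.
\item The stopping rule from your last paragraph (take $\epsilon_N$ to be the smallest scale exceeding $2\delta$, so that $\epsilon_{N+1}>\delta$ and $\epsilon_N\le4\delta$) yields $4\delta+\frac{12}{\sqrt n}\int_\delta^M$. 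The case $\delta=0$ follows by letting $\delta\downarrow0$.
\end{itemize}
Your main-text bookkeeping is correct only if ``$4\delta$ versus $2\delta$'' means dropping the last link. It does not work if it means bounding $\int_{\epsilon_{N+1}}^{\delta}\sqrt{\log\mathcal{N}}$ by $2\delta$, since $\sqrt{\log\mathcal{N}(\epsilon)/n}$ is not bounded in general.

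If you want to keep the absolute value, you need an extra hypothesis such as $0\in V$. This holds where the paper actually uses the lemma, namely for $\operatorname{star}(F_1,0)$. Then $\sup_v|X_v|\le\sup_{v,v'}(X_v-X_{v'})$, and the symmetry of $\tau$ gives the bound with constants $8$ and $24$. That is enough for the paper, since the lemma only enters through $\lesssim$.
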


\begin{lemma}[Talagrand’s concentration inequality \cite{bartlett2005local,rio2002inegalite}] \label{lem:talagrand_ineq}
 Let \( \varrho > 0 \), and let \( x_i \) be independent random variables distributed according to \( \mu \) and let \( V \) be a class of functions from \( \mathcal{X} \) to \( \R \). Assume that all functions \( v \) in \( V\) satisfy \( \mathbb{E}[v] = 0 \) and \( \|v\|_{\infty} \leq \varrho \). Let \( \zeta \) be a positive real number such that \( \zeta^2 \geq \sup_{v \in V} \mathrm{Var}[v(x_i)] \). Then, for any \( t \geq 0 \), with probability at least \( 1 - e^{-t} \),
\[
Z \leq \mathbb{E}[Z] + \sqrt{2tw} + \frac{\varrho t}{3},
\]
where \( Z = \sup_{v \in V} \sum_{i=1}^n v(x_i)\)  and \( w = n \zeta^2 + 2\varrho\mathbb{E}[Z] \).

\end{lemma}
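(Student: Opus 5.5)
This is Bousquet's form of Talagrand's inequality, and I would prove it by the entropy method, bounding the logarithmic moment generating function of $Z-\mathbb{E}[Z]$. First I reduce to a finite class: all quantities are monotone limits over finite subclasses of $V$ (this is how the supremum is interpreted anyway), so I assume $V=\{v_1,\dots,v_N\}$. By rescaling $v\mapsto v/\varrho$ I may take $\varrho=1$. The bound then reads $\sqrt{2tw}+t/3$ with $w=n\zeta^2+2\mathbb{E}[Z]$, and the general case follows by homogeneity.

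For $\lambda\ge0$ set $F(\lambda)=\mathbb{E}[e^{\lambda Z}]$. I would use the tensorization (modified logarithmic Sobolev) inequality for entropy, $\mathrm{Ent}(e^{\lambda Z})\le\sum_{i=1}^n\mathbb{E}\big[e^{\lambda Z}\phi(-\lambda(Z-Z_i))\big]$ with $\phi(x)=e^x-x-1$. Here $Z_i$ is any $\sigma(x_j:j\ne i)$-measurable variable. The natural choice is $Z_i=\sup_k\sum_{j\ne i}v_k(x_j)$. Let $k^*$ be the maximizing index for $Z$ and write $Y_i:=v_{k^*}(x_i)$. Then $Y_i\le Z-Z_i\le 1$, where the upper bound uses $\|v\|_\infty\le1$. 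Moreover $\sum_i(Z-Z_i)\le Z$, and the sum of the conditional variances satisfies $\sum_i\mathbb{E}_i[Y_i^2]\le n\zeta^2$, using $\mathbb{E}[v]=0$.

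The key pointwise estimate is the following: for $x\le1$, $\phi(-\lambda x)$ is controlled by $x^2$ times a function of $\lambda$, with an extra linear term. Combined with the tensorization inequality, this gives a differential inequality of the form
$$\lambda F'(\lambda)-F(\lambda)\log F(\lambda)\le \phi(-\lambda)\,\mathbb{E}\big[e^{\lambda Z}\big(n\zeta^2+Z\big)\big]\quad\text{(up to Bousquet's refinement)}.$$
I then pass to $G(\lambda)=\log F(\lambda)-\lambda\mathbb{E}[Z]$ and solve the resulting Herbst-type differential inequality, aiming for
$$\log\mathbb{E}\big[e^{\lambda(Z-\mathbb{E}Z)}\big]\le w\,\big(e^\lambda-\lambda-1\big),\qquad w=n\zeta^2+2\mathbb{E}[Z].$$
The final step is routine. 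Chernoff's bound turns this into the Bennett tail $\mathbb{P}(Z-\mathbb{E}Z\ge x)\le\exp(-w\,h(x/w))$, where $h(u)=(1+u)\log(1+u)-u$. The elementary inequality $h(u)\ge u^2/(2(1+u/3))$ then yields the Bernstein form. Solving $x^2/(2(w+x/3))=t$ gives $x\le\sqrt{2tw}+t/3$, which is the claimed bound after restoring $\varrho$.

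The main obstacle is the middle step: getting the sharp constant $2\mathbb{E}[Z]$ in $w$, rather than a cruder constant, from the entropy inequality. A naive bound $\phi(-\lambda x)\le x^2\phi(\lambda)$ loses the mean term. Bousquet's trick is to exploit the one-sided bound $Z-Z_i\le1$ together with $\sum_i(Z-Z_i)\le Z$. This gives an inequality involving both $\mathbb{E}[Ze^{\lambda Z}]$ and $F'(\lambda)$. One must then integrate carefully, through a comparison argument for the ODE, so that the coefficient of $\mathbb{E}[Z]$ comes out exactly as $2$. If that delicate integration fails, I would fall back on Rio's martingale-difference argument, or simply accept the cited statement from Bousquet and Bartlett et al., since only the Bernstein shape is used in Lemma~\ref{lem:gen}.
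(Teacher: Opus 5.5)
The paper does not prove this lemma. It quotes Bousquet's form of Talagrand's inequality from \cite{bartlett2005local}, so your fallback of citing it is exactly what the paper does. The entropy-method route you outline is also the right one in principle. As a self-contained proof, however, your sketch has an error in its setup, a gap at its core, and a wrong final step.

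\textbf{Setup.} With $k^*$ the maximizer of $Z$ (and $\varrho=1$), one has $Z-Z_i\le v_{k^*}(x_i)\le 1$, which also gives $\sum_i(Z-Z_i)\le Z$. So $v_{k^*}(x_i)$ bounds $Z-Z_i$ from \emph{above}, not from below. The lower bracket must come from a maximizer $k_i$ of $Z_i$: set $Y_i:=v_{k_i}(x_i)\le Z-Z_i$. Only because $k_i$ is a function of $(x_j)_{j\ne i}$ do you get $\mathbb{E}_i[Y_i]=0$ and $\mathbb{E}_i[Y_i^2]\le\zeta^2$. For $v_{k^*}(x_i)$ both properties fail, since $k^*$ depends on $x_i$. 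For example, take $n=1$, $x$ uniform on $\{1,\dots,N\}$ and $v_k=\mathbf{1}_{\{k\}}-1/N$. Then $v_{k^*}(x)\equiv 1-1/N$, which is not centered and has second moment $\approx 1\gg\zeta^2\approx 1/N$.

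\textbf{Core gap.} The entropy estimate is only asserted (``up to Bousquet's refinement''). Consider the natural pointwise splitting: $\phi(-\lambda x)\le x\,\phi(-\lambda)$ on $[0,1]$ by convexity, plus a bound through $Y_i$ when $Z-Z_i<0$. This controls $\phi(-\lambda)\sum_i(Z-Z_i)_+$, whereas $\sum_i(Z-Z_i)\le Z$ only controls $\sum_i(Z-Z_i)$. The difference, $\phi(-\lambda)\sum_i(Z-Z_i)_-$, is bounded only by a term linear in $(Y_i)_-$. But $\mathbb{E}_i[(Y_i)_-]$ can be of order $\zeta$ rather than $\zeta^2$ (take $Y_i=\pm\zeta$). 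Handling these negative increments is where the real work in Bousquet's proof lies, and nothing in the proposal does it.

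\textbf{Where the difficulty lies.} Your diagnosis of the obstacle is also off. Suppose your displayed inequality held, and set $G(\lambda)=\log\mathbb{E}e^{\lambda(Z-\mathbb{E}Z)}$. Then $(1-e^{-\lambda})G'-G\le(n\zeta^2+\mathbb{E}Z)\,\phi(-\lambda)$. Two facts finish the argument:
\begin{itemize}
\item $(1-e^{-\lambda})(w\phi)'-w\phi=w\,\phi(-\lambda)$;
\item $(D/(e^\lambda-1))'=e^\lambda[(1-e^{-\lambda})D'-D]/(e^\lambda-1)^2$.
\end{itemize}
A one-line comparison then gives $G\le(n\zeta^2+\mathbb{E}Z)\,\phi(\lambda)$. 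That is coefficient $1$ in front of $\mathbb{E}Z$, with no delicate integration. So the obstacle is proving an entropy bound of this shape at all, not integrating it to get the factor $2$.

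\textbf{Final step.} Solving $x^2=2t(w+x/3)$ gives $x=t/3+\sqrt{t^2/9+2tw}$. This is strictly larger than $\sqrt{2tw}+t/3$; it is only $\le\sqrt{2tw}+2t/3$. So the Bernstein relaxation $h(u)\ge u^2/(2(1+u/3))$ loses the constant $t/3$. To obtain $\varrho t/3$, use $\phi(\lambda)\le\lambda^2/(2(1-\lambda/3))$ for $0\le\lambda<3$, which follows from $k!\ge 2\cdot 3^{k-2}$. This is a sub-gamma bound with parameters $(w,1/3)$, and its Cram\'er transform inverts exactly to $\sqrt{2tw}+t/3$. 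The constant is immaterial for \cref{lem:gen}, where constants are absorbed, but it is part of the stated lemma.
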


\begin{lemma}[Ledoux-Talagrand contraction \cite{ledoux2013probability}]
 \label{lem:Ledoux-Talagrand contraction}
Let $ \{\tau_i\}_{i=1}^n $ be independent Rademacher random variables. Suppose the functions
$ \phi_i: \R \to \R,\ 1 \leq i \leq n $  are $L$-Lipschitz with $\phi_i(0) = 0$. Then for any bounded set $T\in \R^n$
\[
\mathbb{E}_\tau \left[ \sup_{(t_1, \dots, t_n) \in T} \left|\sum_{i = 1}^n\tau_i \phi_i(t_i)\right| \right] \leq 2L \cdot \mathbb{E}_\tau \left[ \sup_{(t_1, \dots, t_n) \in T} \left| \sum_{i = 1}^n \tau_i t_i \right| \right].
\]
\end{lemma}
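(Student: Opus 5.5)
The plan is to first prove a one-sided comparison inequality without absolute values, and then obtain the stated two-sided version from it. The factor $2$ is paid in passing from one to the other, and the hypothesis $\phi_i(0)=0$ is used only there. By rescaling $\phi_i \mapsto \phi_i/L$ I may assume $L=1$. Boundedness of $T$ guarantees that every supremum below is finite, so expectations may be split and conditioned freely.

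\textbf{Step 1 (one-sided contraction).} I will show that for any bounded $T\subset\R^n$ and $1$-Lipschitz $\phi_i$,
\[
\mathbb{E}_\tau \Big[\sup_{t\in T}\sum_{i=1}^n \tau_i\phi_i(t_i)\Big]\le \mathbb{E}_\tau\Big[\sup_{t\in T}\sum_{i=1}^n\tau_i t_i\Big].
\]
The argument replaces $\phi_i$ by the identity one coordinate at a time. It therefore suffices to prove the following: for any function $A:T\to\R$ (the remaining terms, with the other $\tau_j$ frozen) and a $1$-Lipschitz $\phi$,
\[
\mathbb{E}_{\tau_1}\sup_{t}\big[A(t)+\tau_1\phi(t_1)\big]\le \mathbb{E}_{\tau_1}\sup_t\big[A(t)+\tau_1 t_1\big].
\]
Averaging over $\tau_1=\pm1$, the left side equals
\[
\tfrac12\sup_{s,t\in T}\big[A(s)+A(t)+\phi(s_1)-\phi(t_1)\big].
\]
Since $\phi(s_1)-\phi(t_1)\le |s_1-t_1|$, and the expression $A(s)+A(t)$ is symmetric in $(s,t)$, I may swap $s$ and $t$ so that $|s_1-t_1| = s_1-t_1$. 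This bounds the left side by
\[
\tfrac12\sup_{s,t}\big[A(s)+A(t)+s_1-t_1\big],
\]
which is exactly the right side. Iterating over $i=1,\dots,n$, and taking expectation over the frozen signs at each stage (the frozen part of $A$ may already contain replaced terms $\tau_jt_j$, which is harmless), gives Step 1. I expect this single-coordinate comparison, and in particular the symmetrization trick of swapping $s$ and $t$, to be the only real obstacle; the rest is bookkeeping.

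\textbf{Step 2 (absolute values).} Let $T_0=T\cup\{0\}$ and $Z(t)=\sum_i\tau_i\phi_i(t_i)$. Because $\phi_i(0)=0$, we have $Z(0)=0$, so $\sup_{T_0}Z=\sup_T Z^+$. Since $|x|\le x^++(-x)^+$,
\[
\sup_T|Z|\le \sup_{T_0}Z+\sup_{T_0}(-Z).
\]
The vector $-\tau$ has the same distribution as $\tau$, so both terms have the same expectation, and
\[
\mathbb{E}\sup_T|Z|\le 2\,\mathbb{E}\sup_{T_0}Z.
\]
Applying Step 1 on $T_0$ bounds this by $2\,\mathbb{E}\sup_{t\in T_0}\sum_i\tau_it_i$. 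Since the value at $t=0$ is $0$, this last supremum is at most $\sup_{t\in T}|\sum_i\tau_it_i|$. Undoing the rescaling gives the constant $2L$, which proves the stated inequality.
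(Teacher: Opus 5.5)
Your proof is correct and complete. It is essentially the standard argument behind the result the paper cites from Ledoux--Talagrand (the paper gives no proof of its own): a one-sided comparison via swapping $s$ and $t$, then adjoining $0$ to $T$ and using the sign symmetry of $\tau$ to pay the factor $2$. Ledoux--Talagrand prove a more general version with a convex increasing $F$ applied inside the expectation, where the one-sided step is more delicate; in your linear case $F(x)=x$ it reduces to the elementary swap, and $\phi_i(0)=0$ is needed only when passing to absolute values, as you correctly observe.
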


\begin{lemma}[Lemma 2.6 \cite{ying2024accurate}] \label{lem:ying2024accurate}
Under the notation and conditions of \cref{lem:Ledoux-Talagrand contraction}, for arbitrary vector $\boldsymbol{c}=(c_1,\cdots,c_n)\in\R^n$, we have
\[
\mathbb{E}_\tau \left[ \sup_{(t_1, \dots, t_n) \in T} \left|\sum_{i = 1}^n\tau_i c_i \phi_i(t_i)\right| \right] \leq 2L \cdot \mathbb{E}_\tau \left[ \sup_{(t_1, \dots, t_n) \in T} \left|\sum_{i = 1}^n \tau_i c_i t_i\right| \right].
\]
\end{lemma}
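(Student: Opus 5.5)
The plan is to reduce the weighted statement to the unweighted Ledoux--Talagrand contraction (\cref{lem:Ledoux-Talagrand contraction}). The weights $c_i$ get absorbed into rescaled contractions acting on a rescaled index set.

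\textbf{Step 1: rescaled contractions.} For each $i$, define $\psi_i:\R\to\R$ by
\[
\psi_i(s):=c_i\,\phi_i(s/c_i)\ \text{ if } c_i\neq 0,\qquad \psi_i(s):\equiv 0\ \text{ if } c_i=0 .
\]
Each $\psi_i$ vanishes at the origin, because $\phi_i(0)=0$. Each $\psi_i$ is also $L$-Lipschitz, whatever the sign of $c_i$:
\[
|\psi_i(s)-\psi_i(s')| = |c_i|\,|\phi_i(s/c_i)-\phi_i(s'/c_i)| \le |c_i|\,L\,|s-s'|/|c_i| = L|s-s'| .
\]
When $c_i=0$, $\psi_i$ is constant, so the bound holds trivially.

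\textbf{Step 2: rescaled index set.} Let
\[
T':=\{(c_1t_1,\dots,c_nt_n):(t_1,\dots,t_n)\in T\}\subset\R^n .
\]
This set is bounded because $T$ is. For every $(t_1,\dots,t_n)\in T$ and every $i$ we have $c_i\phi_i(t_i)=\psi_i(c_it_i)$. When $c_i\neq0$ this is the definition of $\psi_i$; when $c_i=0$ both sides are zero. It follows that, for every fixed $\tau$,
\[
\sup_{t\in T}\Bigl|\sum_{i=1}^n\tau_i c_i\phi_i(t_i)\Bigr| = \sup_{s\in T'}\Bigl|\sum_{i=1}^n\tau_i\psi_i(s_i)\Bigr| ,
\qquad
\sup_{t\in T}\Bigl|\sum_{i=1}^n\tau_i c_it_i\Bigr| = \sup_{s\in T'}\Bigl|\sum_{i=1}^n\tau_i s_i\Bigr| .
\]

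\textbf{Step 3: apply the unweighted lemma.} Apply \cref{lem:Ledoux-Talagrand contraction} to the $L$-Lipschitz functions $\psi_i$ with $\psi_i(0)=0$ on the bounded set $T'$. This gives
\[
\mathbb{E}_\tau\sup_{s\in T'}\Bigl|\sum_i\tau_i\psi_i(s_i)\Bigr| \le 2L\,\mathbb{E}_\tau\sup_{s\in T'}\Bigl|\sum_i\tau_is_i\Bigr| .
\]
Translating back through the identities of Step 2 yields exactly the claimed inequality.

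There is no real obstacle here. The only points needing care are the degenerate coordinates with $c_i=0$ and negative $c_i$. Both are handled by the case definition of $\psi_i$ and by using $|c_i|$ in the Lipschitz estimate.
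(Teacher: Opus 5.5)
Your proof is correct and complete. The rescaled maps $\psi_i(s)=c_i\phi_i(s/c_i)$, with $\psi_i\equiv 0$ when $c_i=0$, are $L$-Lipschitz, vanish at the origin and satisfy $c_i\phi_i(t_i)=\psi_i(c_it_i)$. With the bounded set $T'=\{(c_1t_1,\dots,c_nt_n): t\in T\}$, both suprema become exactly the quantities in the unweighted Ledoux--Talagrand contraction lemma, and you handle negative and vanishing weights correctly.

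There is nothing to compare against: the paper gives no proof of this lemma and simply imports it from \cite{ying2024accurate}. Your rescaling reduction is a self-contained justification of the statement as used.
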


\begin{lemma}[cf. \cite{bartlett2005local}]\label{lem: subroot_f_p1}
If $\phi: [0, \infty) \rightarrow [0, \infty)$ is a nontrivial sub-root function, then it is continuous on $[0, \infty)$, and the equation $\phi(r) = r$ has a unique positive solution. Moreover, if we denote the solution by $ r^* $, then for all $r > 0$, $r \geq \phi(r)$ if and only if $ r^*  \leq r$.
\end{lemma}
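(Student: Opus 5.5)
The argument rests on one observation. On $(0,\infty)$ the fixed-point equation $\phi(r)=r$ is equivalent to $h(r)=0$, where
\[
h(r):=\frac{\phi(r)}{\sqrt r}-\sqrt r .
\]
This function $h$ is \emph{strictly} decreasing, because it is a nonincreasing function minus a strictly increasing one. The plan is to show that $h$ is continuous and changes sign. Existence and uniqueness of $r^*$ then follow from the intermediate value theorem together with strict monotonicity. The equivalence $r\ge\phi(r)\iff r\ge r^*$ is then just the statement $h(r)\le 0\iff r\ge r^*$, obtained by dividing by $\sqrt r>0$.

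\textbf{Continuity.} Take $0<r<r'$. Monotonicity of $\phi$ and of $\phi(r)/\sqrt r$ give
\[
\phi(r)\le\phi(r')\le\sqrt{r'/r}\,\phi(r),
\qquad\text{so}\qquad
0\le\phi(r')-\phi(r)\le\bigl(\sqrt{r'/r}-1\bigr)\phi(r).
\]
The right-hand side tends to $0$ as $r'\downarrow r$. Reversing the roles of $r$ and $r'$ gives left continuity in the same way, so $\phi$, and hence $h$, is continuous on $(0,\infty)$.

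At $r=0$ continuity can actually fail. For example, $\phi(0)=0$ and $\phi\equiv1$ on $(0,\infty)$ is a sub-root function. So I would prove continuity only on $(0,\infty)$, remark that this is all that is needed, and note that the claim at the origin should be read with this caveat or under the extra assumption $\phi(0^+)=\phi(0)$.

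\textbf{Sign change.} Nontriviality means $\phi(r_0)>0$ for some $r_0$. Since $\phi$ is nondecreasing, $\phi>0$ on $[r_0,\infty)$.
\begin{itemize}
\item Near $0$: for $r<r_0$ the sub-root property gives $\phi(r)/\sqrt r\ge \phi(r_0)/\sqrt{r_0}=:c>0$. Hence $h(r)\ge c-\sqrt r>0$ for $r<\min\{r_0,c^2\}$.
\item Near $\infty$: for $r\ge1$ we have $\phi(r)/\sqrt r\le\phi(1)$. Hence $h(r)\le\phi(1)-\sqrt r\to-\infty$.
\end{itemize}
The intermediate value theorem therefore yields a zero $r^*>0$ of $h$, and strict monotonicity makes it unique. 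Any positive fixed point of $\phi$ is a zero of $h$, so $r^*$ is the unique positive solution of $\phi(r)=r$.

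\textbf{Main obstacle.} The only delicate point is the behaviour at $r=0$. We need positivity of $h$ near the origin, which the lower bound $\phi(r)/\sqrt r\ge c$ handles without any continuity at $0$. We must also avoid relying on the (false in general) continuity of $\phi$ at $0$. Everything else is elementary.
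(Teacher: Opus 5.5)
Your proof is correct. The paper gives no proof of this lemma; it quotes it from Bartlett--Bousquet--Mendelson. Your argument is essentially the standard one behind that result: continuity on $(0,\infty)$ via $\phi(r)\le\phi(r')\le\sqrt{r'/r}\,\phi(r)$, strict decrease of $h$ (or, equivalently in sign, of $\phi(r)/r$), and the limits at $0$ and $\infty$.

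Your caveat is also justified. Under the paper's definition, $\phi(0)=0$ with $\phi\equiv 1$ on $(0,\infty)$ is a nontrivial sub-root function that is discontinuous at $0$, so continuity holds only on $(0,\infty)$. That is all the paper actually uses: existence and uniqueness of $r^*$, and the equivalence $r\ge\phi(r)\iff r\ge r^*$ in the proof of the generalization theorem.

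One small fix: in the sign-change step, take $r_0>0$ so that $\phi(r_0)/\sqrt{r_0}$ is defined. Monotonicity always allows this.
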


\begin{lemma}[cf. \cite{bartlett2005local}]\label{lem: subroot_f_p2}
Let $F$ be a class of functions (which may depend on the data), and let $T: F \rightarrow \R^{+}$ be a (possibly random) function that satisfies $T(\xi f) \leq \xi^2 T(f)$ for any $f \in F$ and any $\xi \in [0, 1]$. Then the (random) function $\phi$ defined for $r \geq 0$ by
$\phi(r) = \widehat{R}_n \left\{ f \in \operatorname{star}\left(F, f^*\right) : T\left(f - f^*\right) \leq r \right\}$ is sub-root, and $r \mapsto \mathbb{E} \phi(r)$ is also sub-root, where $\operatorname{star}(F, f^*)$ is the star hull of $F$ around $f^*$ and is defined by
$ \operatorname{star}(F, f^*) = \{ f^* + \xi(f-f^*): f\in F,\ \xi\in[0,1]\}.$
\end{lemma}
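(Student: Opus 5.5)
The plan is to verify the three defining properties of a sub-root function directly for each realization of the data and of $T$, and then pass to expectations. For $r\ge 0$ write
\[
A_r:=\left\{ f \in \operatorname{star}(F,f^*) : T(f-f^*)\le r\right\},\qquad \phi(r)=\widehat{R}_n(A_r).
\]
Nonnegativity is immediate, since $\phi(r)$ is an expectation of a supremum of absolute values. Monotonicity also follows at once: for $r_1\le r_2$ we have $A_{r_1}\subset A_{r_2}$, and a supremum over a larger set cannot decrease.

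The substantive step is to show that $r\mapsto \phi(r)/\sqrt r$ is nonincreasing, which I will do by a scaling argument. Fix $0<r_1<r_2$ and set $\xi:=\sqrt{r_1/r_2}\in(0,1)$, and consider the map
\[
S_\xi: f\mapsto f^*+\xi(f-f^*).
\]
I will check that $S_\xi$ sends $A_{r_2}$ into $A_{r_1}$, in two parts.
\begin{itemize}
\item \emph{The star hull is preserved.} If $f=f^*+\eta(g-f^*)$ with $g\in F$ and $\eta\in[0,1]$, then $S_\xi f=f^*+\xi\eta(g-f^*)$. Since $\xi\eta\in[0,1]$, this lies in $\operatorname{star}(F,f^*)$.
\item \emph{The constraint scales correctly.} By the hypothesis on $T$, $T(\xi(f-f^*))\le \xi^2T(f-f^*)\le (r_1/r_2)\,r_2=r_1$.
\end{itemize}
Hence $\phi(r_1)\ge \widehat{R}_n(S_\xi A_{r_2})$, and it remains to compare this with $\xi\,\phi(r_2)$.

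Comparing $\widehat{R}_n(S_\xi A_{r_2})$ with $\xi\,\phi(r_2)$ is the step I expect to be the main obstacle. The reason is the fixed shift $f^*$: the Rademacher sum of $S_\xi f$ equals $\frac1n\sum_i\tau_i f^*(x_i)+\xi\,\frac1n\sum_i\tau_i (f-f^*)(x_i)$, and the absolute value prevents the $f^*$ term from simply cancelling. I will handle this by interpreting the class through the differences $f-f^*$, i.e. working with $\{f-f^*: f\in A_r\}$. This is the convention under which Bartlett's localized complexity is defined, and it coincides with the statement in the only case used in the paper, $f^*=0$ in \eqref{eq:phi_1}. With differences, each element of $S_\xi A_{r_2}$ minus $f^*$ is exactly $\xi$ times an element of $A_{r_2}$ minus $f^*$. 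Taking the supremum and then $\mathbb{E}_{\vec\tau}$ gives
\[
\phi(r_1)\ \ge\ \xi\,\phi(r_2)=\sqrt{r_1/r_2}\,\phi(r_2),
\]
which is precisely $\phi(r_1)/\sqrt{r_1}\ge\phi(r_2)/\sqrt{r_2}$. If instead one insists on keeping $f^*$ inside the supremum, I would try first to bound the extra term by $\widehat{R}_n(\{f^*\})$, which does not depend on $r$, and absorb it. However, the clean route is the difference formulation.

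Finally, all three properties hold pointwise for every realization of the sample and of the random $T$, and each is preserved under expectation.
\begin{itemize}
\item $\mathbb{E}\phi\ge0$.
\item $\mathbb{E}\phi$ is nondecreasing, by monotonicity of expectation.
\item $\mathbb{E}\phi(r)/\sqrt r=\mathbb{E}[\phi(r)/\sqrt r]$ is nonincreasing, since it is an average of nonincreasing functions.
\end{itemize}
Thus $r\mapsto\mathbb{E}\phi(r)$ is sub-root as well, which completes the plan.
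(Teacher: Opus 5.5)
Your argument is correct and is the same scaling proof as Lemma~3.4 of \cite{bartlett2005local}, which the paper cites rather than proves. That proof contracts $A_{r_2}$ toward $f^*$ by $\xi=\sqrt{r_1/r_2}$, uses star-shapedness and $T(\xi h)\le \xi^2T(h)$ to land in $A_{r_1}$, and then uses positive homogeneity of the Rademacher average. The one point of departure is the shift $f^*$. In \cite{bartlett2005local} the Rademacher average is the signed quantity $\sup_f \frac1n\sum_i\tau_i f(x_i)$, with no absolute value. So after writing $S_\xi f=\xi f+(1-\xi)f^*$, the term $\frac{1-\xi}{n}\sum_i\tau_i f^*(x_i)$ is independent of $f$, leaves the supremum, and vanishes under $\mathbb{E}_{\tau}$. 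No reformulation is needed there.

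With the absolute-value definition of $\widehat{R}_n$ used in this paper, your switch to the difference class is not merely the cleaner route but a necessary one. The fallback of absorbing $\widehat{R}_n(\{f^*\})$ cannot work, because under that definition the lemma as literally stated is false. A counterexample:
\begin{itemize}
\item Take $n=1$, $f^*(x_1)=1$, $F=\{f_0\}$ with $f_0(x_1)=-9$, and $T(h)=h(x_1)^2$.
\item Elements of $\operatorname{star}(F,f^*)$ take the value $1-10\eta$ at $x_1$ for $\eta\in[0,1]$, with $T(f-f^*)=100\eta^2$.
\item Hence for $0<r\le 100$ one gets $\phi(r)=\sup_{0\le s\le\sqrt r}|1-s|=\max\{1,\sqrt r-1\}$.
\item So $\phi(9)/\sqrt9=2/3<9/10=\phi(100)/\sqrt{100}$, and $\phi(r)/\sqrt r$ is not nonincreasing.
\end{itemize}
The paper only invokes the lemma with $f^*=0$ (see \eqref{eq:phi_1}), where the two formulations coincide, so your version is exactly what is needed downstream. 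State the lemma explicitly in the difference form, or with the signed Rademacher average, and drop the absorption idea.
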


\begin{lemma}[cf. \cite{bartlett2005local}]
\label{lem: subroot_f_p3}
Let $F$ be a class of functions that map $\mathcal{X}$ into $[-b, b]$ with $b > 0$. For every $x > 0$ and $r$ satisfying
$$
r \geq 10b R_n \left\{ f: f \in F, \, \mathbb{E}[f^2] \leq r \right\} + \frac{11b^2 x}{n},
$$
then with probability at least $1 - e^{-x}$,
\[\left\{ f \in F:\, \mathbb{E}[f^2] \leq r \right\} \subset \left\{ f \in F: \|f\|_{n,2}^2 \leq 2r \right\}.
\]
\end{lemma}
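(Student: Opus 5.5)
The plan is to prove the inclusion by a one-sided uniform deviation bound for the squared functions, combining Talagrand's concentration inequality (\cref{lem:talagrand_ineq}) with symmetrization (\cref{lem:symmetrization}) and contraction (\cref{lem:Ledoux-Talagrand contraction}). Write $F_r:=\{f\in F:\ \mathbb{E}[f^2]\le r\}$ and $a:=b^2x/n$. By hypothesis $a\le r/11$ and $8bR_n(F_r)\le \tfrac45(r-11a)$. It suffices to show that, with probability at least $1-e^{-x}$,
\[
\sup_{f\in F_r}\Big(\|f\|_{n,2}^2-\mathbb{E}[f^2]\Big)\le r .
\]
Indeed, this gives $\|f\|_{n,2}^2\le \mathbb{E}[f^2]+r\le 2r$ for every $f\in F_r$.

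\textbf{Step 1: Talagrand applied to the centered squares.} Consider $V:=\{v_f=f^2-\mathbb{E}[f^2]:\ f\in F_r\}$. Each $v_f$ has zero mean. Since $f^2\in[0,b^2]$, we have $\|v_f\|_\infty\le b^2=:\varrho$. For the variance, $\mathrm{Var}[v_f]\le\mathbb{E}[f^4]\le b^2\mathbb{E}[f^2]\le b^2r=:\zeta^2$. Put $Z=\sup_{V}\sum_i v(x_i)$. Then \cref{lem:talagrand_ineq} gives, with probability at least $1-e^{-x}$,
\[
\tfrac1n Z\le \tfrac1n\mathbb{E}[Z]+\sqrt{2a\big(r+2\tfrac1n\mathbb{E}[Z]\big)}+\tfrac a3 .
\]

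\textbf{Step 2: bounding $\mathbb{E}[Z]/n$ via Rademacher complexity.} By \cref{lem:symmetrization}, $\tfrac1n\mathbb{E}[Z]\le 2R_n(\{f^2: f\in F_r\})$. The map $\phi(t)=t^2$ is $2b$-Lipschitz on $[-b,b]$ and satisfies $\phi(0)=0$, so \cref{lem:Ledoux-Talagrand contraction}, applied conditionally on the sample and then averaged, gives $R_n(\{f^2\})\le 4b\,R_n(F_r)$. Hence
\[
E_0:=\tfrac1n\mathbb{E}[Z]\le 8bR_n(F_r)\le \tfrac45(r-11a)\le \tfrac45 r .
\]

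\textbf{Step 3: closing the arithmetic.} Substituting the bounds from Step 2 into Step 1 gives
\[
\tfrac1n Z\le \tfrac45 r-\tfrac{44}{5}a+\sqrt{2a\cdot\tfrac{13}{5}r}+\tfrac a3 .
\]
It therefore suffices to check $\sqrt{5.2\,ar}\le \tfrac15 r+(8.8-\tfrac13)a$. By AM--GM, the right-hand side is at least $2\sqrt{\tfrac15\cdot 8.46\,ar}\approx\sqrt{6.77\,ar}$, which exceeds $\sqrt{5.2\,ar}$. This yields $\tfrac1nZ\le r$, which is the claimed inclusion.

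The main (mild) obstacle is keeping the constants straight: the factor $2L$ in the absolute-value contraction, the factor $2$ from symmetrization, and the $r+2E_0$ inside the square root. The slack in the constants $10$ and $11$ is exactly what absorbs these. Measurability of the supremum over a possibly uncountable class is handled in the standard way, by restricting to countable dense subclasses, since all functions here are continuous.
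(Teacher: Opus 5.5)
Your proof is correct and follows the same route as the source the paper cites without proof (Corollary~2.2 of Bartlett--Bousquet--Mendelson). That route applies Bousquet's form of Talagrand's inequality to the centered squares using $\mathrm{Var}[f^2]\le b^2\mathbb{E}[f^2]\le b^2 r$, then symmetrization and contraction to pass from $\{f^2\}$ to $\{f\}$, then elementary arithmetic; your absolute-value conventions give a contraction constant $4b$ instead of $2b$, and the slack in the constants $10$ and $11$ absorbs this. One small caveat: $F$ in the lemma is an abstract class, so measurability of the supremum should come from the usual countability or separability assumption, not from continuity.
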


\section[Proof of an a priori estimate]{Proof of the estimate \cref{eq:priori_estimate_rnn}}\label{proof:prior}
\begin{proof}
The interface problem \eqref{eq:interface_problem} can be recast as:
\begin{equation*}
\begin{aligned}
-\Delta \generalfunc_i(\x) &= g(\x)/\beta_i,\quad \text{in} \ \Omega_i, \ i=1,2, \\
 \llbracket \beta\nabla \generalfunc(\x)\cdot \mathbf{n} \rrbracket &= g_n(\x), \quad
\llbracket \generalfunc(\x)\rrbracket =g_d(\x),\quad \text{on} \ \Gamma,\\
\generalfunc(\x)&= g_b(\x),\quad \text{on} \ \partial\Omega.
\end{aligned}
\end{equation*}
Fix $c \in [0,1]$. We introduce two auxiliary functions {$\Tilde{\generalfunc}_i \in H^2(\Omega_i),$} $i = 1,2$, satisfying:
\begin{equation*}
\left\{
\begin{aligned}
 -\Delta \Tilde{\generalfunc}_1 &= g/\beta_1, \quad \text{in } \Omega_1, \\
 \Tilde{\generalfunc}_1 &= -c g_d, \quad \text{on } \Gamma,
\end{aligned}
\right. \quad
\left\{
\begin{aligned}
 -\Delta \Tilde{\generalfunc}_2 &= g/\beta_2, \quad \text{in } \Omega_2, \\
 \Tilde{\generalfunc}_2 &= (1-c) g_d, \quad \text{on } \Gamma, \\
 \Tilde{\generalfunc}_2 &= g_b, \quad \text{on } \partial\Omega.
\end{aligned}
\right.
\end{equation*}
According to \cite{agmon1959estimates}, the following regularity estimates hold:
\begin{equation}
 \begin{aligned} &\|\Tilde{\generalfunc}_1\|_{0,\Omega_1}\lesssim \norm{g}_{-2,\Omega_1}/\beta_1 + c \|g_d\|_{-1/2,\Gamma},\\ &\|\Tilde{\generalfunc}_2\|_{0,\Omega_2}\lesssim \norm{g}_{-2,\Omega_2}/\beta_2 + (1-c ) \|g_d\|_{-1/2,\Gamma} + \|g_b\|_{-1/2,\partial\Omega}.
 \end{aligned}
 \label{eq: tilde_u_estimates}
\end{equation}
Defining $\Tilde{g}(\x) = \beta_1\partial_{\mathbf{n}}\Tilde{\generalfunc}_1(\x)-\beta_2\partial_{\mathbf{n}}\Tilde{\generalfunc}_2(\x)$ on $\Gamma$, we have
\begin{equation}\label{eq:tildeg}
\|\tilde g\|_{-3/2,\Gamma}
\lesssim \beta_1\|\Tilde{\generalfunc}_1\|_{0,\Omega_1}+\beta_2\|\Tilde{\generalfunc}_2\|_{0,\Omega_2}.
\end{equation}

Let
$\Bar{\generalfunc}_i(\x)= \generalfunc_i(\x)-\Tilde{\generalfunc}_i(\x)$ with $\x\in \Omega_i$. Then $\Bar{\generalfunc}_i(\x)\in H^2(\Omega_i)$ and satisfies
\begin{equation*}
\left\{
 \begin{aligned}
 -\nabla\cdot(\beta \nabla \Bar \generalfunc) &= 0,\quad \text{in} \ \Omega,\\
 \llbracket \beta\nabla \Bar{\generalfunc}\cdot \mathbf{n} \rrbracket = g_n+\Tilde{g}, \quad
 \llbracket \Bar{\generalfunc}\rrbracket &=0, \quad \text{on} \ \Gamma,\\
 \Bar{\generalfunc} &= 0, \quad \text{on} \ \partial\Omega.
 \end{aligned}
 \right.
\end{equation*}
For every $w\in L^2(\Omega)$, by \cite{huang2002some,xu1998some} and the trace theorem, the adjoint
interface problem
\begin{equation*}
\left\{
\begin{aligned}
-\nabla\!\cdot(\beta_i\nabla \mu _i) &= w \quad &&\text{in }\Omega_i,\ i=1,2,\\
\llbracket \mu  \rrbracket &=0 \quad &&\text{on }\Gamma,\\
\llbracket \beta \partial_{\mathbf n} \mu  \rrbracket &=0 \quad &&\text{on }\Gamma,\\
\mu &=0 \quad &&\text{on }\partial\Omega,
\end{aligned}
\right.
\end{equation*}
admits a unique solution $\mu$ satisfying $\mu_i\in H^2(\Omega_i)$ ($i=1,2$) and
\begin{equation*}
\|\mu \|_{2,\Omega_1} \lesssim \frac{\|w\|_{0,\Omega_1}}{\beta_1}, \quad \|\mu \|_{2,\Omega_2} \lesssim  \frac{\|w\|_{0,\Omega_2}}{\beta_2}, \quad \|  \mu \|_{3/2,\Gamma}
\lesssim  T_{\beta}\|w\|_{0,\Omega},
\end{equation*}
where $T_{\beta} = 1/\max\{\beta_1, \beta_2\}$. Taking integration by parts, we have
\begin{equation*}
(\Bar \generalfunc,w)_{L^2(\Omega)} = \langle g_n+\Tilde{g}, \mu |_{\Gamma}\rangle_{\Gamma}
\qquad \forall\, w\in L^2(\Omega),
\end{equation*}
which yields
\begin{equation*}
|(\Bar \generalfunc,w)_{L^2(\Omega)}| \leq \norm{g_n+\Tilde{g}}_{-3/2,\Gamma} \norm{\mu }_{3/2,\Gamma} \lesssim T_{\beta} \norm{g_n+\Tilde{g}}_{-3/2,\Gamma}\|w\|_{0,\Omega} .
\end{equation*}
Therefore we have
\begin{equation*}
\|\Bar \generalfunc\|_{0, \Omega} \lesssim T_{\beta}\|g_n+\tilde g\|_{-3/2,\Gamma}.
\end{equation*}
Combining the last estimate with \eqref{eq: tilde_u_estimates}-\eqref{eq:tildeg} gives
\[
\begin{aligned}
 \|\generalfunc\|_{0,\Omega}
 \le \|\Bar{\generalfunc} \|_{0,\Omega} + \|\Tilde{\generalfunc}\|_{0,\Omega}   &\lesssim \left(\frac{1}{\beta_1}+T_\beta\right)\norm{g}_{0,\Omega_1} + \left(\frac{1}{\beta_2}+T_\beta\right)\norm{g}_{0,\Omega_2} + T_\beta\norm{g_n}_{0,\Gamma} \\ \quad &  + \left(1+T_\beta(c\beta_1+(1-c)\beta_2)\right)\norm{g_d}_{0,\Gamma
} + \left(1+T_\beta\beta_2\right)\norm{g_b}_{0,\partial\Omega}.
\end{aligned}
\]
A sharper bound follows by taking $c = 0$ if $\beta_1>\beta_2$, and $c = 1$ if $\beta_1< \beta_2$.
\end{proof}

\section[Proof of a covering number lemma]{Proof of \cref{lem:covering_number_tanh_nn}}
\label{proof:covering number}
\begin{proof}
Every $u\in G$ can be written as
\(
u(\x) = \sum_{m=1}^Ma_m \tanh(\vec{\omega}_m\x+ b_m),
\)
where parameters $(\vec{\omega}_m, b_m)\in [-q,q]^{d+1}$ are fixed. For each $i\in \{1,\cdots,M\}$, consider the following two function classes:
\[
\begin{aligned}
 &G_1^{(i)} = \left\{a_i\tanh(\vec{\omega}_i\x+ b_i):\ \sum_{m=1}^M(1+\norm{\vec{\omega}_{m}}_1)^2|a_m|\leq C_B \right\}, \\
 &G_2^{(i)} =\left\{a_i\tanh(\vec{\omega}_i\x+ b_i):\ |a_i|\leq C_B \right\}.
\end{aligned}
\]
Clearly $G_1^{(i)}  \subset  G_2^{(i)}$. Since $G_2^{(i)}$ is parameterized by the single scalar $a_i$, we have
\[
\mathcal{N}\left(\epsilon, G_1^{(i)}, \|\cdot\|_{n,1}\right) \leq \mathcal{N}(\epsilon, G_2^{(i)}, \|\cdot\|_{n,1}) \leq \frac{2C_B}{\epsilon}, \quad \epsilon>0.
\]
Viewing $G$ as the sum of the $M$ subclasses $G_1^{(i)}$ and applying Lemma 16.4 of \cite{gyorfi2006distribution}, we obtain
\begin{equation}
\label{eq:c_num_f_0}
 \mathcal{N}\left(\epsilon, G, \|\cdot\|_{n,1}\right) \leq \left(\frac{2C_BM}{\epsilon}\right)^{M}.
\end{equation}
Next, since
$\tanh(x)$ and its first and second derivatives are uniformly bounded and $\norm{\vec{\omega}_m}_{\infty}\leq q$, a similar argument yields
\begin{equation}
 \label{eq:c_num_f_1_2}
 \mathcal{N}\left(\epsilon, D^{\vec{k}}G, \|\cdot\|_{n,1}\right) \leq \left(\frac{2q C_BM}{\epsilon}\right)^{M},\ \mathcal{N}\left(\epsilon, D^{\vec{k}}G, \|\cdot\|_{n,1}\right) \leq \left(\frac{2q^2 C_BM}{\epsilon}\right)^{M}
\end{equation}
for $|\vec{k}|=1$ and $|\vec{k}|=2$, respectively. Combining \eqref{eq:c_num_f_0} and \eqref{eq:c_num_f_1_2} yields the conclusion.
\end{proof}

\end{document}